\documentclass{article}

\usepackage{arxiv}

\usepackage[utf8]{inputenc} 
\usepackage[T1]{fontenc}    
\usepackage{url}            
\usepackage{booktabs}       
\usepackage{amsfonts}       
\usepackage{nicefrac}       
\usepackage{microtype}      
\usepackage{lipsum}

\usepackage{relsize,balance,lipsum,bbm,enumerate,times,comment,color,graphicx,setspace,mathdots,mathrsfs,amssymb,latexsym,amsfonts,amsmath,cite,stmaryrd,caption,pgf,accents,mathtools,tabu,enumitem,hhline,array,epstopdf,nicefrac,amsthm,microtype,algorithmic,array,float,bm,url}

\usepackage{graphicx}

\usepackage{mathtools} 

\usepackage[utf8]{inputenc}
\usepackage[english]{babel}
\newcommand{\an}[1]{{\textcolor{black}{#1}}}
\newcommand{\sa}[1]{{\textcolor{black}{#1}}}

\DeclareMathOperator*{\argmin}{arg\,min}
\usepackage{tikz}

\usetikzlibrary{positioning}
\definecolor{mygreen}{RGB}{153,255,153}
\definecolor{myorange}{RGB}{255,178,102}
\definecolor{myred}{RGB}{255,153,153}
\definecolor{myblue}{RGB}{153,204,255}

\newtheorem{thm}{Theorem}
\newtheorem{defn}{Definition}
\newtheorem{prop}{Proposition}
\newtheorem{lem}{Lemma}
\newtheorem{rem}{Remark}

\newtheorem{assum}{Assumption}
\newtheorem{exmp}{Example}
\def\conv{{\rm conv}}
\def\dom{{\rm dom}}

\def\an#1{{\color{black}#1}}
\def\sa#1{{\color{black}#1}}
\def\dist{{\rm dist}}

\title{On Fixed-Time Stability of Continuous Dynamics for Non-Monotone Variational Inequalities}

\author{
    Sina Arefizadeh$^*$\\
    School of Electrical and Computer Engineering\\
    Arizona State University\\
    Tempe, Arizona, USA\\
    \texttt{sarefiza@asu.edu}
\And
    Kunal~Garg\\
    School for Engineering of Matter, Transport and Energy\\
    Arizona State University\\
    Tempe, Arizona, USA\\
    \texttt{kgarg24@asu.edu}
\And
    Angelia~Nedi\'c\\
    School of Electrical and Computer Engineering\\
    Arizona State University\\
    Tempe, Arizona, USA\\
    \texttt{Angelia.Nedich@asu.edu}
}

\theoremstyle{definition}
\def\argmin{\mathop {\rm argmin}}

\begin{document}
\maketitle
\begin{abstract}
Non-monotone variational inequalities (NMVI) are an important class of problems that generalize non-convex optimization and have various applications in optimization theory, machine learning, game theory, and economics, among others. Most existing work on NMVIs focuses on the asymptotic convergence of algorithms proposed to solve these problems. 
In this paper, we tackle the problems of exponential and fixed-time stability of the solution set of a class of NMVIs for both unconstrained and constrained problems. 
We first present novel conditions guaranteeing exponential stability of 
solutions to unconstrained NMVIs for a uniquely constructed dynamical system under mild assumptions on the gradient of the non-monotone map. Then, under similar assumptions, we construct another novel dynamical system whose equilibrium point is fixed-time stable, i.e., the trajectories reach the equilibrium within a fixed time, independent of the initial conditions. For the case of constrained NMVIs, we employ a continuous-time variant of the Korpelevich method for exponential stability of the solution set, and provide a novel scaling factor in the dynamics to achieve fixed-time stability. We illustrate the efficacy of the proposed modified dynamical systems through numerical simulations and conclude the paper with a brief note on the behavior of the discretized variant of the proposed dynamics and on further work that remains to be done. 
\end{abstract}

\vspace{0.5cm} 
\noindent \textbf{Keywords:} Fixed-time stability, Variational Inequalities, Korpelevich dynamic, Non-smooth Lyapunov analysis


\section{Introduction}
The continuous-time dynamical systems approach has been well-studied for decades \cite{brown1989some}. More recent studies in this line of thought focus on developing techniques for discrete-time systems by mimicking them with their continuous-time system proxies \cite{wibisono2016variational,su2016differential}. A notable study that utilized ordinary differential equations (ODEs) for algorithmic development purposes is a continuous-time algorithm similar to the Nesterov accelerated method in the
context of control design\cite{krichene2015accelerated}. The superiority of this viewpoint lies in the availability of Lyapunov stability analysis for continuous-time systems. Extensive efforts were made to extend this perspective to constrained optimization \cite{brown1989some,wibisono2016variational,su2016differential,cherukuri2017saddle,dhingra2018proximal,osher2016sparse,durr2012class,krichene2015accelerated,helmke2012optimization,garg2020fixed,garg2022fixed}.

A question that has recently garnered attention is the fixed-time stability of solutions to continuous-time dynamical systems arising from optimization problems \cite{garg2020fixed}, where the convergence to a solution 
is achieved in a fixed time independent of the initial conditions. The origin of this problem dates back to \cite{polyakov2011nonlinear,bhat2000finite}, where the feedback design with fixed-time stability of the closed-loop system and finite-time stability of continuous autonomous systems was investigated. The study of gradient flow and its variants has been particularly employed in a variety of applications, ranging from image processing to motion planning \cite{clarenz2002relations,cortes2006finite}.

Ordinary differential equations and control systems have been used for optimization problems via gradient flow, and they have recently attracted interest for solving Variational Inequality (VI) problems~\cite{garg2022fixed,xia1998general,xia2002projection}. Studies \cite{bhat2000finite} and \cite{polyakov2011nonlinear} introduced the notions of finite-time and fixed-time stability of equilibrium for dynamical systems, respectively. An approximate discretization of these dynamics can be employed to tackle a similar problem in the discrete-time~\cite{garg2022fixed}. While the existing work explores the notion of faster convergence in the context of optimization and related problems \cite{cortes2006finite,garg2020fixed,poveda2021nonsmooth,poveda2022fixed,liu2023multiobjective}, most of the existing work assumes convexity or even stronger convexity conditions. There are works that deal with non-convex settings in 
optimization problems \cite{li2023convex,danilova2022recent}, or
the form of NMVI \cite{arefizadeh2024non,viet2025relaxed}.
However, these works deal with traditional notions of asymptotic and exponential convergence. 

In this paper, we study the fixed-time stability of the solution set of an NMVI through the lens of continuous-time dynamical systems. The foundations of our analysis are Lyapunov conditions for fixed-time stability \cite{polyakov2011nonlinear}. We consider a dynamic based on unconstrained minimization of the squared mapping norm for unconstrained NMVIs, and a continuous-time variant of the Korpelevich method for constrained NMVIs. 
Specifically, our main contributions in this paper are:\\
\noindent(1) 
{\it Unconstrained NMVI}. Based on minimizing the squared norm of the VI mapping, we propose a dynamic attracted to the solution set of the VI (Proposition~\ref{Prop-sol-set-attract}), relying on \cite{cortes2006finite} and  insights from our prior work~\cite{arefizadeh2024non,arefizadeh2025existence}. For this dynamic, we show that the solution set of the VI is exponentially stable (Theorem~\ref{Thm:Design-continuous-dynamic}). Then, we modify the dynamic so that the solution set of the VI is fixed-time stable (Theorem~\ref{Thm:Design-continuous-dynamic-fixed-time-stable}). These stability results are based on smooth Lyapunov analysis.\\
\noindent(2)
{\it Constrained NMVI}. We propose continuous-time dynamics for the discrete-time Korpelevich method
    attracted to the solution set of the VI. Under some assumptions on the VI mapping, we show that the solution set of the VI is exponentially stable (Theorem~\ref{thm-Korpelevich-analysis}). We then scale this dynamics with a novel scalar function so that the solution set of the VI is fixed-time stable (Theorem~\ref{thm-Korpelevich-v3-analysis}) for the modified dynamical system. Here, the stability results are based on non-smooth Lyapunov analysis. \\
\noindent (3)
{\it Alternative conditions on the mapping.} We provide alternatives to our basic assumption on the VI mapping that we have used in the fixed-time stability analysis of the VI solution set (Theorem~\ref{thm:smallest eigenvalue_Jacobian-direction-1}, Example~\ref{exmp-relation-mu-quasi-str-FxTS}, Example~\ref{exmp-relation-with-eigenvalues}).

The remainder of the paper is organized as follows: Section~\ref{Sec: Problem_Statement} provides the problem statement and some preliminary results. In Section~\ref{Sec:unconstrained}, the exponential and fixed-time stability results for the solutions of an unconstrained NMVI are presented under the dynamics that is aimed to minimize the squared norm of the mapping. In Section~\ref{sec:contrained}, the exponential and fixed-time stability results for the solutions of a constrained NMVI are established for continuous-time Korpelevich dynamics.
In Section~\ref{sec-disc}, we have a discussion on some alternative assumptions. 
In Section~\ref{Sec: Simulations}, numerical results are provided that illustrate the theoretical findings. Section~\ref{Sec: Conclusion} concludes the paper.

\section{Problem Statement and Preliminaries}\label{Sec: Problem_Statement}

\textit{Notations}: We use  $\|\cdot\|$ for the Euclidean norm and $\langle \cdot,\cdot\rangle$ for the inner product. The identity matrix is denoted by $\mathbb{I}$. 
For a set $X$ and a scalar $t$, the scaled set $tX$ is defined by
$tX=\{t x\mid x\in X\}$. The Minkowski sum of two sets $X,Y\subseteq \mathbb{R}^m$ is defined by $X+Y=\{x+y\mid x\in X, y\in Y\}$.
We use ${\rm conv}(X)$ for the convex hull of a set $X$, and ${\rm cl}(X)$ for the closure of the set $X$.
The distance from a point $x\in\mathbb{R}^m$ to a nonempty set $X\subset \mathbb{R}^m$ is given by $\dist(x,X)= \inf_{y\in X}\|x-y\|$. 
We use $\Pi_X[\cdot]$ to denote the Euclidean projection on a closed convex set $X\subseteq\mathbb{R}^m$, i.e., $\Pi_X[z]=\argmin_{x\in X}\|x-z\|$. 

For a locally Lipschitz function $V:\mathbb R^m\rightarrow \mathbb R$, the subdifferential set at a point $x\in \mathbb R^m$, denoted as $ \partial V(x)$, is given by $ \partial V(x)= \textnormal{conv}\{\lim_{k\to +\infty}\nabla V(x_k)\; |\; x_k\to x, x_k\notin Z\cup \Omega_V\}$, where $\Omega_V$ is the set of points where $V$ fails to be differentiable and $Z\subset\mathbb R^m$ is any set of Lebesgue measure zero.
The Lie derivative of a locally Lipschitz function $V$ along a continuous vector field $f:\mathbb R^m\rightarrow\mathbb R^m$ at a point $x\in \mathbb R^m$ is given by a set-valued map $\mathcal{L}_fV(x) = \{ \langle \xi,f(x)\rangle \mid \xi\in \partial V(x)\}$. 


We first introduce the VI problem of interest and provide some basic notions and preliminary results that we use later to design dynamics whose trajectories converge to the solutions of the VI (see, e.g., ~\cite{facchinei2003finite,konov,KSbook} for more details on VIs). 

\begin{defn}
Given a set $K\subseteq\mathbb{R}^m$ and a mapping $F:K\to\mathbb{R}^m$, the variational inequality problem, denoted by
VI$(K,F)$, consists of determining
   a point $x^*\in K$ such that 
   $$\langle F(x^*),x-x^*\rangle\geq 0\qquad\hbox{for all $x\in K$}.$$ 
   \end{defn}
A point $x^*$ satisfying the preceding inequality is referred to as a solution to the variational inequality problem VI$(K,F)$. The set of all such solutions is denoted by Sol$(K,F) \coloneq \{x^*\in K\; |\; \langle F(x^*),x-x^*\rangle\geq 0 \ \hbox{for all }x\in K\}$.

The mapping $F:K\to \mathbb{R}^m$ is monotone on the set $K$ when $\langle F(x)-F(y),x-y\rangle\geq 0$ for all $x,y\in K$. In this case, we say that the variational inequality VI$(K,F)$ is monotone. We are interested in non-monotone VI$(K,F)$,
where the mapping $F$ need not be monotone on the set $K$.

Our goal is to design a continuous-time nonlinear system of the form 
\begin{equation}\label{Eq:Sys}
    \dot{x}=f(x),
\end{equation}
where the mapping $f:\text{dom}(f)\to \mathbb{R}^m$, with $\text{dom}(f)\subseteq\mathbb R^m$ being the domain of the mapping $f$, is to be properly selected so that the trajectories of the system reach the solution set Sol$(K,F)$ of a non-monotone variational inequality problem VI$(K,F)$ exponentially fast. Also, we are interested in choosing the mapping $f$ such that any trajectory of the system reaches a solution or the solution set of the VI$(K,F)$ within a fixed time, independent of the initial state.

We assume that {\it the mapping $f$ is continuous} and that the system in~\eqref{Eq:Sys} {\it has a unique solution for every \an{initial point $x(0)\in\dom(f)$}}.
For a given initial point \an{$x(0)\in\dom(f)$}, the solution to the system in~\eqref{Eq:Sys} is a trajectory $x:[0,+\infty)\to\mathbb{R}^m$, which satisfies
\begin{equation}\label{eq:sys-solution}
    x(t)=x(0)+\int_{0}^t f\left(x(s)\right) ds\qquad\hbox{for all }t\ge0.
\end{equation}

We start with notions of invariant and positively invariant set for the nonlinear system in~\eqref{Eq:Sys}.

\begin{defn}\cite[pg.\ 127] {khalil2002nonlinear}\label{Def:Inv-Set} \textbf{Invariant and positively invariant set:}
    \an{A set $M\subseteq \dom(f)$} is said to be an invariant set with respect to \eqref{Eq:Sys} if \[x(0)\in M \quad \Rightarrow \quad x(t)\in M \quad \hbox{for all } t\in \mathbb R.\]
    The set $M$ is said to be a positively invariant set with respect to \eqref{Eq:Sys} if \[x(0)\in M \quad\Rightarrow \quad x(t)\in M \quad \hbox{for all }t\ge0.\]
\end{defn}
For a nonempty set $M\subseteq \mathbb{R}^m$, we let ${\rm dist}(x,M)$ be the distance function from $x$ to the set $M$, i.e.,
 \[{\rm dist}(x,M):=\inf_{y\in M}\|x-y\|.\] 
For a nonempty set $M \subseteq \mathbb{R}^m$ and $\delta>0$, we define
\[M_\delta:=\{x\in \mathbb{R}^m\mid {\rm dist}(x,M)<\delta\}.\]

\begin{defn}\cite[Definition~8.1]{khalil2002nonlinear}
\label{Def:stab-set}  
\textbf{Stable and asymptotically stable set}:
     A closed invariant set $M$ is stable for the system in~\eqref{Eq:Sys} if for each $\varepsilon>0$, there is $\delta>0$ such that 
     \[x(t)\in M_\varepsilon\qquad\hbox{for all $t\ge 0$ and all \an{$x(0)\in M_\delta\cap\dom(f)$}}.\]
    It is asymptotically stable if $\delta>0$ can be chosen such that
     \[\lim_{t\to\infty}\dist(x(t),M)=0 \quad\hbox{for all }x(0)\in \an{M_\delta\cap\dom(f)}.\]
\end{defn}

\begin{defn}\label{Def:expon-stab--2}  \textbf{Exponentially stable set}:
    A closed positively invariant set $M$ is exponentially stable for the nonlinear system \eqref{Eq:Sys} if there exist constants $\beta_1,\beta_2,\beta_3>0$ such that 
    for all \an{$x(0)\in \dom(f)$} with $\dist(x(0),M)\leq\beta_3$, it holds that
\begin{equation*}\label{eq:exp-stab--2}
       {\rm dist}(x(t),M)\leq \beta_1 {\rm dist}(x(0),M)e^{-\beta_2 t} \quad \hbox{for all } t\ge0.
    \end{equation*}
    The set $M$ is globally exponentially stable for system~\eqref{Eq:Sys} if the preceding inequality holds for all \an{$x(0)\in\dom(f)$}.
\end{defn}
Next, we present the definitions of finite-time stable and fixed-time stable closed sets.
\begin{defn} \label{def: FTS} \textbf{Finite-time stable (FTS) set}:
A closed positively invariant 
set $M$ is \emph{finite-time stable (FTS)} for the system in~\eqref{Eq:Sys} if $M$ is stable and there exists $\delta>0$ such that the trajectories of \eqref{Eq:Sys} approach the set $M$ in a finite time for every $x(0)\in M_\delta\cap\dom(f)\setminus M$, i.e., for any $x(0) \in M_\delta\cap\dom(f)\setminus M$, there is a finite time $T(x(0))>0$ such that
\[
\lim_{t \to T(x(0))} {\rm dist}\left(x(t),M\right) = 0.
\]
The time $T(x(0))$ is the settling time for the system in~\eqref{Eq:Sys} starting at the initial state $x(0)$.
The set $M$ is \emph{globally FTS stable} if $M_\delta = \dom(f)$.
\end{defn}
\begin{defn}\label{Def:FxTS}\textbf{Fixed-time stable (FxTS) set}:
A closed  and globally finite-time stable set $M$ is \emph{fixed-time stable (FxTS)} for the system in~\eqref{Eq:Sys} if the settling time $T(x(0))$ is bounded over the set \an{$\dom(f)$}, i.e.,  
\[\sup_{x(0)\in \an{\dom(f)}}T(x(0))<\infty.\]
\end{defn}

\begin{rem}
When the set $M$ is a singleton, e.g., there is a unique equilibrium for the system in~\eqref{Eq:Sys}, 
Definition~\ref{Def:stab-set} and Definition~\ref{Def:expon-stab--2} coincide with the standard definitions of stable, asymptotically stable, and (globally) exponentially stable equilibrium.
Definition~\ref{def: FTS} and Definition~\ref{Def:FxTS} are the same as Definition~3 and Definition~4 in~\cite{polyakov2011nonlinear}, respectively, where the term ``attractive" was used instead of ``stable".
\end{rem}

Next, we provide sufficient conditions for fixed-time stability of sets for the nonlinear system in~\eqref{Eq:Sys} using a potentially non-differentiable Lyapunov function. To this end, let \( D^{*}\psi(t) \) denote the upper right-hand derivative of a scalar function $\psi:\mathbb{R}\to\mathbb{R}$, defined by
\[D^{*}\psi(t) := \limsup_{h \to 0^{+}} \frac{\psi(t+h)-\psi(t)}{h}.\]
For a locally Lipschitz continuous function $\psi(\cdot)$, the upper right-hand derivative exists everywhere (see the first paragraph of Section~2 in \cite{naser2017nonsmooth}). In the sequel, we use Rademacher's Theorem, which we review here for the reader's convenience.
\begin{thm}\cite[Theorem~3.2]{evans2025measure}\label{The:Rademacher}~\textbf{Rademacher’s Theorem:}
    A locally Lipschitz continuous mapping $h: \mathbb{R}^n \to \mathbb{R}^m$ is differentiable almost everywhere,  i.e.,  the set of points where $h$ is not differentiable has zero Lebesgue measure.
\end{thm}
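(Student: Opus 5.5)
The plan is the classical proof of Rademacher's Theorem: reduce to scalar functions on all of $\mathbb{R}^n$ with a global Lipschitz constant, show that directional derivatives exist almost everywhere by a one-dimensional argument, identify them as linear in the direction, and then upgrade to full (Fréchet) differentiability by a compactness argument over directions. For the reduction, note that $h=(h_1,\dots,h_m)$ is differentiable at $x$ if and only if each component $h_i$ is, and a finite union of null sets is null, so it suffices to take $m=1$. Local Lipschitzness lets us cover $\mathbb{R}^n$ by countably many balls on which $h$ is Lipschitz. On each ball we extend $h$ to a globally Lipschitz function on $\mathbb{R}^n$ via the McShane extension $\tilde h(x)=\inf_y\{h(y)+L\|x-y\|\}$. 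A countable union of null sets is again null, so we may assume $h$ is globally $L$-Lipschitz and real-valued.

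\emph{Directional derivatives.} Fix a unit vector $v$. For every line parallel to $v$, the restriction $t\mapsto h(x+tv)$ is Lipschitz, hence absolutely continuous. By Lebesgue's one-dimensional differentiation theorem it is differentiable almost everywhere on that line. The set where $D_vh(x)=\lim_{t\to0}(h(x+tv)-h(x))/t$ fails to exist is Borel, since it is defined through limsup and liminf over rational $t$ by continuity of $h$. Fubini then shows it is $\mathcal{L}^n$-null. Applied to $v=e_1,\dots,e_n$, this gives the gradient $\nabla h(x)$ almost everywhere.

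\emph{Identification.} Next we show $D_vh(x)=\langle\nabla h(x),v\rangle$ a.e. for each fixed $v$. Take any $\zeta\in C_c^\infty(\mathbb{R}^n)$. By dominated convergence, since the difference quotients are bounded by $L$,
\[
\int D_vh\,\zeta\,dx=-\int h\,D_v\zeta\,dx=-\sum_i v_i\int h\,\partial_i\zeta\,dx=\int\langle\nabla h,v\rangle\zeta\,dx,
\]
where the middle equality integrates by parts in $x$ for the quotients. Since $\zeta$ is arbitrary, the two functions agree almost everywhere.

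\emph{Upgrade to differentiability.} This is the step I expect to be the main obstacle. Choose a countable dense set $\{v_k\}$ of the unit sphere. Let $A$ be the full-measure set where $\nabla h$ exists and $D_{v_k}h=\langle\nabla h,v_k\rangle$ for all $k$. Fix $x\in A$ and $\varepsilon>0$, and pick finitely many $v_1,\dots,v_N$ forming an $\varepsilon/(2L)$-net of the sphere. Then choose $\delta>0$ so that for $|t|<\delta$ and all $k\le N$,
\[
\left|\frac{h(x+tv_k)-h(x)}{t}-\langle\nabla h(x),v_k\rangle\right|<\frac{\varepsilon}{2}.
\]
For an arbitrary unit $v$, select $v_k$ with $\|v-v_k\|<\varepsilon/(2L)$. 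Using the Lipschitz bound on $h$ and $\|\nabla h(x)\|\le\sqrt n L$, we get
\[
|h(x+tv)-h(x)-t\langle\nabla h(x),v\rangle|\le C\varepsilon|t|
\]
for a constant $C$ depending only on $n$, uniformly in $v$. This is exactly differentiability at $x$. Since $A$ has full measure, the nondifferentiability set is null, which is the claim of Theorem~\ref{The:Rademacher}.
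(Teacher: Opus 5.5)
Your proposal is correct and is essentially the proof behind the paper's citation. The paper gives no argument of its own and only refers to \cite[Theorem~3.2]{evans2025measure}, whose proof follows your steps: reduce to a real-valued, globally Lipschitz function; obtain a.e.\ existence of directional derivatives from one-dimensional Lebesgue differentiation and Fubini; identify $D_vh=\langle\nabla h,v\rangle$ a.e.\ by testing against $C_c^\infty$ functions; and upgrade to full differentiability on the set fixed by a countable dense family of directions, using a finite net and the Lipschitz bound. The only difference is cosmetic: you make the localization explicit with the McShane extension, whereas the reference simply notes that differentiability is a local property.
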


The following lemma provides sufficient conditions for fixed-time stability of sets for the system in~\eqref{Eq:Sys}. It plays an important role in the development of the main results herein.

\begin{lem}\label{Lem-2-Kunal's paper}
Let \an{$M\subset\dom(f)$} be the equilibrium set for the system in~\eqref{Eq:Sys}.
Let $V :\mathbb R^m\to \mathbb [0,+\infty)$ be a locally Lipschitz continuous function satisfying the following conditions:
\begin{itemize}
    \item [(1)] If $x\in M$, then $V(x)=0$. Also, there are scalars $\beta_1,\beta_2>0$ such that 
    \[\beta_1{\rm dist}^{\beta_2}(x,M)\leq V(x)\qquad\hbox{for all $x\in\mathbb{R}^m$}. \]
    \item [(2)] For any initial condition \an{$x(0)\in \dom(f)$}, the following holds along the solution $x(\cdot)$ for all $t\geq 0$:
\begin{equation*} 
D^*{V}(x(t)) \le -\left(p\, V(x(t))^{\alpha} + q\, V(x(t))^{\beta}\right)^\theta,
\end{equation*}
where $\alpha,\beta,p, q, \theta > 0$, $\alpha \theta< 1$, and $\beta\theta > 1$.
\end{itemize}
Then, the set $M$ is fixed-time stable for the system in~\eqref{Eq:Sys} and the settling-time $T (x(0))$ is bounded as follows:
\[
T (x(0))\le \frac{1}{p^\theta(1 - \alpha\theta)} + \frac{1}{q^\theta(\beta \theta- 1)} \quad \hbox{for all }x(0)\in{\rm dom}(f).
\]
\end{lem}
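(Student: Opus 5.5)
The plan is to reduce the statement to a scalar comparison argument for the function $v(t):=V(x(t))$ along an arbitrary trajectory, and then translate the resulting bound on $v$ into the distance to $M$ using condition (1). Fix $x(0)\in\dom(f)$. Since $V$ is locally Lipschitz and $x(\cdot)$ is continuous, indeed $C^1$ because $f$ is continuous, the composition $v$ is locally Lipschitz on $[0,\infty)$. By Rademacher's Theorem (Theorem~\ref{The:Rademacher}) applied in one dimension, $v$ is therefore absolutely continuous and differentiable almost everywhere, with $\dot v=D^*v$ wherever the derivative exists. Condition (2) then gives, for almost every $t$,
\[
\dot v(t)\le -\big(p\,v(t)^{\alpha}+q\,v(t)^{\beta}\big)^{\theta}\le 0 .
\]
So $v$ is nonincreasing: it is absolutely continuous with an a.e.\ nonpositive derivative.

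Next I would bound the time $v$ needs to reach zero. Since $(a+b)^\theta\ge \max(a,b)^\theta$, we have
\[
\dot v\le -p^\theta v^{\alpha\theta} \quad\text{whenever } v\le 1, \qquad \dot v\le -q^\theta v^{\beta\theta} \quad\text{whenever } v\ge 1 .
\]
Take any interval on which $v\ge1$. There $w=v^{1-\beta\theta}$ is absolutely continuous, because $v$ is bounded away from zero, and satisfies $\dot w\ge q^\theta(\beta\theta-1)$. Since $w\le 1$ at the end, the time spent with $v\ge1$ is at most $1/(q^\theta(\beta\theta-1))$; this bound does not depend on $v(0)$, which is the fixed-time feature. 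Once $v\le1$, monotonicity keeps it there. Then $u=v^{1-\alpha\theta}$ satisfies $\dot u\le -p^\theta(1-\alpha\theta)$ as long as $v>0$, so $v$ reaches zero within a further time $1/(p^\theta(1-\alpha\theta))$. After that, $v\equiv0$ by monotonicity and nonnegativity. The main technical obstacle is handling these power transforms rigorously with only a.e.\ differentiability. I would do this through the chain rule for absolutely continuous functions composed with functions that are $C^1$ away from $0$, applying it on intervals where $v$ stays in a compact subset of $(0,\infty)$ and then passing to limits at the endpoints.

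Finally, condition (1) gives $\dist(x(t),M)\le (v(t)/\beta_1)^{1/\beta_2}$, so $\dist(x(t),M)=0$ for all $t\ge T(x(0))$, with $T(x(0))$ bounded by the stated sum. Since $M$ is closed (it is the equilibrium set of a continuous $f$), $x(t)\in M$ from then on, which proves convergence in uniformly bounded time. Stability follows from the same estimate: $\dist(x(t),M)\le (V(x(0))/\beta_1)^{1/\beta_2}$ for all $t\ge0$, and $V(x(0))\to0$ as $x(0)\to M$ by continuity of $V$ and $V|_M=0$ (using local uniform continuity near a compact piece of $M$, or near the nearest point if needed). Positive invariance of $M$ holds because $M$ consists of equilibria and solutions are unique. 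Together these give fixed-time stability in the sense of Definition~\ref{Def:FxTS} with the stated settling-time bound.
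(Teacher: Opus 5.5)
Your proposal is correct and follows essentially the same route as the paper. The paper also sets $\psi(t)=V(x(t))$, uses Rademacher's theorem and monotonicity, and splits at the level $V=1$. It then integrates $\dot\psi/\psi^{\beta\theta}$ and $\dot\psi/\psi^{\alpha\theta}$, which are your transforms $w=v^{1-\beta\theta}$ and $u=v^{1-\alpha\theta}$, to obtain the time bounds $1/(q^\theta(\beta\theta-1))$ and $1/(p^\theta(1-\alpha\theta))$, and concludes via condition~(1) and closedness of $M$.

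Your handling of the power transforms on intervals where $v$ stays away from $0$ is more careful than the paper's. The stability argument you add, which the paper only asserts, yields the uniform $\delta$ of Definition~\ref{Def:stab-set} only when $\sup_{M_\delta}V\to 0$ as $\delta\to 0$ (e.g.\ when $M$ is compact); the paper's proof shares this caveat.
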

\begin{proof}
See Appendix~\ref{Sec:Proof of Lem-2-Kunal's paper}.
\end{proof}

Next, our goal is to relate $D^*V(x(t))$ with the Lie-derivative $\mathcal{L}_fV(x(t))$ for all $t\ge0$.  
The existing results, such as \cite[Theorem~1]{cortes2006finite} and  \cite[Lemma~1]{bacciotti1999stability} 
state that $D^*V(x(t))\in \mathcal{L}_fV(x(t))$ almost everywhere. 
These results apply to differential inclusions and require that $V$ is regular in the sense to be described shortly. Our setting is simpler, and we aim to show that the regularity condition on $V$ is not necessary to relate $D^*V(x(t))$ with the Lie-derivative $\mathcal{L}_fV(x(t))$ for all $t\ge0$. 

The Clarke generalized directional derivative of a locally Lipschitz continuous function $\Psi:\mathbb{R}^m\to \mathbb{R}$ at a point $x\in \mathbb{R}^m$ in the direction $v\in \mathbb{R}^m$ is given by \cite[pg.\ 69-70]{clarke1998nonsmooth}:
\[\Psi^\circ(x;v)=\limsup_{y\to x, s\to0^+}\frac{\Psi(y+sv)-\Psi(y)}{s}.\]
The locally Lipschitz function $\Psi:\mathbb{R}^m\to \mathbb{R}$ is said to be {\it regular at a point} $x\in \mathbb{R}^m$
if the one-sided directional derivative $\Psi'(x;v)$ 
of $\Psi$ at the point $x$ in direction $v$,
\[\Psi'(x;v)=\lim_{s\to0^+}\frac{\Psi(x+sv)-\Psi(x)}{s},\]
exists for all directions $v\in\mathbb{R}^m$ and
\[\Psi^\circ(x;v)=\Psi'(x;v)\qquad\hbox{for all directions } v\in\mathbb{R}^m.\] 
The function $\Psi$ is said to be {\it regular} if it is regular at every $x\in \mathbb{R}^m$ (see \cite[Definition 2.3.4]{clarke1990optimization}).

In the following lemma, for a locally Lipschitz continuous function $V$, we show that $D^*V(x(t))\le\mathcal{L}_fV(x(t))$ for all $t\ge0$ along any solution $x(\cdot)$ of the system in~\eqref{Eq:Sys}, without assuming that the function $V$ is regular. 
\begin{lem}\label{lem: Second-cond}
Let $V:\mathbb{R}^m\to \mathbb{R}$ be locally Lipschitz continuous function, and let $x(\cdot)$ be the solution of~\eqref{Eq:Sys} 
with an arbitrary initial point (see~\eqref{eq:sys-solution}). Then, we have
\[D^*V(x(t))\le \max\mathcal{L}_fV(x(t))\qquad\hbox{for all } t\ge0.\]
\end{lem}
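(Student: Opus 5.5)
The plan is to bound the upper right-hand derivative by Clarke's generalized directional derivative and then use the standard identity relating that derivative to the generalized gradient. Fix $t\ge 0$, set $x=x(t)$ and $v=f(x)$. By \eqref{eq:sys-solution} and continuity of $f$ along the continuous trajectory, we have $x(t+h)=x+hv+o(h)$ as $h\to0^+$. Let $L$ be a Lipschitz constant of $V$ on a ball around $x$. For small $h>0$ we then get
\[
\frac{V(x(t+h))-V(x)}{h}\le \frac{V(x+hv)-V(x)}{h}+L\,\frac{\|x(t+h)-x-hv\|}{h}.
\]
The last term tends to $0$, since $\|x(t+h)-x-hv\|\le \int_t^{t+h}\|f(x(s))-f(x)\|ds=o(h)$. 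Taking $\limsup_{h\to0^+}$ gives
\[
D^*V(x(t))\le \limsup_{h\to0^+}\frac{V(x+hv)-V(x)}{h}\le V^\circ(x;v).
\]
The second inequality holds because restricting to $y=x$ only lowers the limsup in the definition of $V^\circ$. This step is routine.

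Next, we invoke Clarke's result \cite[Prop.~2.1.2 and Thm.~2.5.1]{clarke1990optimization}. It states that $V^\circ(x;v)=\max\{\langle\xi,v\rangle\mid \xi\in\partial V(x)\}$, and that the generalized gradient coincides with the set $\partial V(x)$ defined in the Notations, namely the convex hull of limits of gradients taken off any null set $Z$ and off the nondifferentiability set $\Omega_V$. The latter set has measure zero by Theorem~\ref{The:Rademacher}. Since $\partial V(x)$ is nonempty, convex and compact, the maximum is attained. It equals $\max\mathcal{L}_fV(x)$ by the definition of the Lie derivative, and this yields the claim for every $t\ge0$.

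The main obstacle is justifying the identification of $V^\circ(x;v)$ with the support function of the gradient-limit set, if one does not simply cite Clarke. To prove it directly, I would argue as follows. For $y$ near $x$ and small $s$, Fubini together with Rademacher shows that for almost every nearby $y$, the segment $y+[0,s]v$ meets $Z\cup\Omega_V$ only in a set of one-dimensional measure zero. On such segments,
\[
V(y+sv)-V(y)=\int_0^s\langle\nabla V(y+\tau v),v\rangle d\tau.
\]
By continuity of $V$ these segments suffice in the limsup. Each integrand is at most $\sup\{\langle\nabla V(z),v\rangle \mid z\in B(x,\epsilon)\setminus(Z\cup\Omega_V)\}$. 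As $\epsilon\to0$, this supremum converges to the maximum over limit gradients, by local boundedness of $\nabla V$ and compactness. This gives the inequality $V^\circ(x;v)\le\max_{\xi\in\partial V(x)}\langle\xi,v\rangle$, which is the only direction needed. Regularity of $V$ is never used, because we only need an upper bound on $D^*V$.
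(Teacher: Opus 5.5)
Your proposal is correct and follows essentially the paper's argument. You split the difference quotient into a Clarke-directional-derivative term plus a Lipschitz error controlled by $\|x(t+h)-x(t)-hf(x(t))\|=o(h)$, then invoke Clarke's max formula $V^\circ(x;v)=\max_{\xi\in\partial V(x)}\langle \xi,v\rangle$. The only cosmetic differences are these: you keep the base point fixed at $x(t)$, bounding the Dini derivative, which is trivially at most $V^\circ$, whereas the paper shifts the base point to $v(s)=x(t+s)-sf(x(t))$ and uses the limsup over $y\to x(t)$; and your explicit appeal to Clarke's gradient formula, which identifies $\partial V$ with the gradient-limit definition in the Notations, is a careful addition the paper leaves implicit.
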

\begin{proof}
See Appendix~\ref{Sec:lem: Second-cond}.
\end{proof}
\section{Unconstrained VI}\label{Sec:unconstrained}
In this section, we consider a VI$(\mathbb{R}^m,F)$ and investigate the stability properties of its solution set ${\rm Sol}(\mathbb{R}^m,F)$ by considering a continuous-time system of the form $\dot x=f(x)$ whose set of equilibria coincides with ${\rm Sol}(\mathbb{R}^m,F)$.

\subsection{Exponential Stability}
We start by showing that the solution set Sol$(\mathbb{R}^m,F)$ can be approached by the trajectories of a continuous-time nonlinear system, when the mapping $F$ is continuously differentiable.
\begin{prop}\label{Prop-sol-set-attract}
    Let $F:\mathbb{R}^m\to \mathbb{R}^m$ be a continuously differentiable mapping, and assume that  ${\rm Sol}(\mathbb{R}^m,F)\ne\emptyset$.
    Let $x_0 \in S\subset \mathbb{R}^m$, with $S$ being a compact and  positively invariant set with respect to dynamic $\dot{x}=-\nabla F(x)^\top F(x)$. Assume that $\lambda_{\min}(\nabla F(x)\nabla F(x)^\top) \neq 0$ for all $x\in S$ where $F(x)\neq 0$. Then, any solution $x:[0,+\infty)\to \mathbb{R}^m$ of $\dot{x}=-\nabla F(x)^\top F(x)$ starting from $x_0$ converges to the largest invariant set $M$ contained in the set ${\rm Sol}(\mathbb{R}^m,F)\cap S$.
\end{prop}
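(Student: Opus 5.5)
This is a LaSalle invariance argument with the smooth Lyapunov function $V(x)=\tfrac12\|F(x)\|^2$. First I note that in the unconstrained case ${\rm Sol}(\mathbb{R}^m,F)=\{x\mid F(x)=0\}$. Indeed, if $\langle F(x^*),x-x^*\rangle\ge 0$ for all $x\in\mathbb{R}^m$, then choosing $x=x^*-F(x^*)$ gives $-\|F(x^*)\|^2\ge0$, so $F(x^*)=0$. The converse is immediate.

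Since $F$ is $C^1$, $V$ is $C^1$ with $\nabla V(x)=\nabla F(x)^\top F(x)$. Along the trajectory $\dot x=-\nabla F(x)^\top F(x)$ we therefore have
\[
\dot V(x(t))=-\|\nabla F(x(t))^\top F(x(t))\|^2
=-F(x)^\top\nabla F(x)\nabla F(x)^\top F(x)\le 0 .
\]
In the last form I am using the paper's convention that $\nabla F(x)$ is the matrix with $\nabla F(x)^\top F(x)=\nabla V(x)$. The vector field is continuous, and we are given that the trajectory stays in the compact, positively invariant set $S$ and is defined on $[0,+\infty)$. I would then invoke LaSalle's invariance principle \cite[Theorem~4.4]{khalil2002nonlinear}. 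It gives that $x(t)$ converges to the largest invariant set contained in
\[
E=\{x\in S\mid \dot V(x)=0\}=\{x\in S\mid \nabla F(x)^\top F(x)=0\}.
\]

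The key step is to identify $E$ with ${\rm Sol}(\mathbb{R}^m,F)\cap S$. The inclusion ${\rm Sol}\cap S\subseteq E$ is clear, since $F(x)=0$ forces $\nabla F(x)^\top F(x)=0$. For the reverse inclusion, take $x\in E$ and suppose $F(x)\ne0$. The hypothesis then says $\nabla F(x)\nabla F(x)^\top$ is positive definite, because it is positive semidefinite and its smallest eigenvalue is nonzero. Hence
\[
F(x)^\top\nabla F(x)\nabla F(x)^\top F(x)\ge\lambda_{\min}\|F(x)\|^2>0,
\]
which contradicts $\nabla F(x)^\top F(x)=0$. So $F(x)=0$, and by the first paragraph $x\in{\rm Sol}(\mathbb{R}^m,F)$. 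Thus $E={\rm Sol}(\mathbb{R}^m,F)\cap S$, and the largest invariant set in $E$ is $M$, which proves the claim.

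The main obstacle is the careful use of LaSalle's principle: it needs boundedness of the trajectory and a continuous $\dot V$. Both follow from compactness and positive invariance of $S$ together with $F\in C^1$. The $\omega$-limit set is nonempty, compact and invariant, $V$ is constant on it, and it lies in $E$. One subtlety is that the eigenvalue hypothesis must be read for the matrix appearing in $\|\nabla V\|^2$. If the dimensions of $\nabla F$ were transposed, I would instead use the fact that for square matrices $A$, $\lambda_{\min}(AA^\top)\ne0$ is equivalent to $A$ being nonsingular, which is equivalent to $\lambda_{\min}(A^\top A)\ne 0$. So the argument is unaffected.
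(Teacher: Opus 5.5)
Your proof is correct and follows the same route as the paper: LaSalle's invariance principle on the compact, positively invariant set $S$, with Lyapunov function $\|F(x)\|^2$. Your factor $\tfrac12$ is immaterial. You also spell out, using the eigenvalue hypothesis, why $\{x\in S\mid \dot V(x)=0\}$ coincides with ${\rm Sol}(\mathbb{R}^m,F)\cap S$, a step the paper leaves implicit.
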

\begin{proof} 
    Let $V(x)=\|F(x)\|^2$ for all $x$. Note that 
    $\dot V(x)=-2F(x)^\top\nabla F(x)\nabla F(x)^\top F(x)\leq 0$ for all $x$. Thus, the result follows by LaSalle's Theorem (\cite[Theorem~4.4]{khalil2002nonlinear}), where we note that the set ${\rm Sol}(\mathbb{R}^m,F)\cap S$ is closed. This follows from
    ${\rm Sol}(\mathbb{R}^m,F)=\{x\in\mathbb{R}^m\mid F(x)=0\}$ being closed ($F$ is continuous) and the assumption that $S$ is compact.
\end{proof}

Next, we will investigate the conditions on the mapping $F$ ensuring that the solution set ${\rm Sol}(\mathbb{R}^m,F)$ is globally exponentially stable, assuming that 
the mapping $F$ is Lipschitz continuous. Specifically, a mapping $F$ is Lipschitz continuous on a set $K\subseteq \mathbb{R}^m$ with a constant $L>0$ when the following relation is satisfied 
\[\|F(x)-F(y)\|\leq L\|x-y\| \quad\hbox{for all $x,y\in K$}.\]
It is Lipschitz continuous if $K=\mathbb{R}^m$ in the preceding relation.

The following result shows that the solution set ${\rm Sol}(\mathbb{R}^m,F)$ is globally exponentially stable
for the nonlinear system of Proposition~\ref{Prop-sol-set-attract}.
In the result, we use
$\lambda_{\min} (A)$ to denote the smallest eigenvalue of a symmetric matrix $A$ and $\lambda_2(A)$ for its second smallest eigenvalue.

\begin{thm}\label{Thm:Design-continuous-dynamic}
    Let the mapping $F : \mathbb{R}^m \to \mathbb{R}^m$ be continuously differentiable and Lipschitz continuous with a constant $L$. Assume that  {\rm Sol}$(\mathbb{R}^m,F)\ne\emptyset$ and 
    there is  $\zeta>0$ such that 
\[\zeta {\rm dist}\left(x,{\rm Sol}(\mathbb{R}^m,F)\right)\le \|F(x)\| \qquad\hbox{for all } x\in\mathbb{R}^m.\]
Consider the dynamical system
$\dot{x} = - \nabla F(x)^\top F(x).$
Assume that there is $\eta>0$ such that, whenever $F(x)\ne 0$, either  
\[\lambda_{\min}(\nabla F(x) \nabla F(x)^\top) \ge \eta,\]
or 
\[\lambda_{\min}(\nabla F(x) \nabla F(x)^\top) =0, \quad \lambda_2(\nabla F(x) \nabla F(x)^\top)\geq \eta,\]
        and $F(x)$ is orthogonal to the eigenspace corresponding to the eigenvalue 0 of $\nabla F(x) \nabla F(x)^\top$.
        Then, the set {\rm Sol}$(\mathbb{R}^m,F)$ is
exponentially stable for the given dynamic.
\end{thm}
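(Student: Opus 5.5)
We take the Lyapunov function $V(x)=\|F(x)\|^2$, which is continuously differentiable because $F$ is. Along the trajectories of $\dot x=-\nabla F(x)^\top F(x)$ we have
\[\dot V(x)=-2F(x)^\top\nabla F(x)\nabla F(x)^\top F(x).\]
The plan is to prove the differential inequality $\dot V\le -2\eta V$. After that, we sandwich $V$ between multiples of $\dist^2(x,{\rm Sol}(\mathbb{R}^m,F))$ and read off the exponential estimate in Definition~\ref{Def:expon-stab--2}.

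\emph{Step 1 (decay of $V$).} Fix $x$ with $F(x)\neq 0$ and set $A=\nabla F(x)\nabla F(x)^\top$, which is symmetric positive semidefinite. We use the spectral decomposition of $A$ and split into the two cases of the hypothesis. In the first case $\lambda_{\min}(A)\ge\eta$, so the Rayleigh quotient gives $F^\top A F\ge\eta\|F\|^2$. In the second case $F(x)$ is orthogonal to the kernel of $A$, so $F(x)$ lies in the span of the eigenvectors with eigenvalues at least $\lambda_2(A)\ge\eta$. Expanding $F(x)$ in that eigenbasis again gives $F^\top AF\ge\eta\|F\|^2$. When $F(x)=0$ both sides vanish. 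Hence $\dot V(x)\le-2\eta V(x)$ for all $x$. By the comparison lemma, $V(x(t))\le V(x(0))e^{-2\eta t}$ along every solution.

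\emph{Step 2 (sandwich).} The error bound hypothesis gives the lower bound $\zeta^2\dist^2(x,{\rm Sol})\le V(x)$. For the upper bound, note that ${\rm Sol}(\mathbb{R}^m,F)=\{x\mid F(x)=0\}$ is closed and nonempty, so there is a nearest point $\bar x\in{\rm Sol}$ with $\|x-\bar x\|=\dist(x,{\rm Sol})$. Lipschitz continuity of $F$ then gives
\[\|F(x)\|=\|F(x)-F(\bar x)\|\le L\,\dist(x,{\rm Sol}).\]
Combining the two bounds with Step 1 yields
\[\dist(x(t),{\rm Sol})\le\frac{L}{\zeta}\,\dist(x(0),{\rm Sol})\,e^{-\eta t}\qquad\hbox{for all } t\ge0.\]
This is global exponential stability with $\beta_1=L/\zeta$ and $\beta_2=\eta$.

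\emph{Step 3 (closedness and positive invariance).} Closedness of ${\rm Sol}$ was noted in Step 2. Every point of ${\rm Sol}$ is an equilibrium, since $F(x)=0$ there, so ${\rm Sol}$ is positively invariant under the assumed uniqueness of solutions. Global existence of solutions is also assumed in the paper's standing setup. In any case it follows here: the vector field $-\nabla F^\top F$ is continuous, and $V$ is nonincreasing and bounds $\dist(x(t),{\rm Sol})$, so trajectories stay bounded relative to ${\rm Sol}$.

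\emph{Main obstacle.} The only delicate point is the second case of Step 1, where $A$ is singular. The orthogonality of $F(x)$ to the zero eigenspace is exactly what lets $\lambda_2(A)$ play the role of $\lambda_{\min}(A)$. We will write this out explicitly with an orthonormal eigenbasis. If the zero eigenvalue has multiplicity greater than one, we interpret $\lambda_2$ as the smallest nonzero eigenvalue.
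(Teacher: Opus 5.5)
Your proof is correct and follows the paper's argument essentially step for step (the Lyapunov function $V=\|F\|^2$, the Rayleigh-quotient bound $F^\top\nabla F\nabla F^\top F\ge\eta\|F\|^2$, Gr\"onwall, and the sandwich $\zeta^2\dist^2(x,{\rm Sol}(\mathbb{R}^m,F))\le V(x)\le L^2\dist^2(x,{\rm Sol}(\mathbb{R}^m,F))$), except that you keep the factor $2$ in $\dot V\le-2\eta V$ and so obtain the rate $e^{-\eta t}$, whereas the paper drops it and states $e^{-\eta t/2}$. Your aside on global existence is unnecessary under the paper's standing assumption, and as phrased (bounded distance to a possibly unbounded set) it does not by itself rule out finite-time escape; the clean reason is $\|\dot x(t)\|\le L\|F(x(t))\|\le L\|F(x(0))\|$, which holds because $\|\nabla F\|\le L$ and $V$ is nonincreasing.
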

\begin{proof}
Note that Sol$(\mathbb{R}^m,F)=\{x\in\mathbb{R}^m\mid F(x)=0\}$, which is a closed set since the mapping $F$ is continuous. Next, we show that Sol$(\mathbb{R}^m,F)$ is the equilibrium set for the given system.
We have  for all $x\in\mathbb{R}^m$,
\[\|\dot x\|^2= F(x)^\top\nabla F(x)\nabla F(x)^\top F(x).\]
Thus,
\[\|\dot x\|^2= 0\qquad\hbox{when }F(x)=0,\]
while by the assumption on the eigenvalues of the matrices $\nabla F(x)\nabla F(x)^\top$, we obtain
\begin{align*}\|\dot x\|^2
&\ge \eta  \|F(x)\|^2>0\qquad\hbox{when }F(x)\ne0.
\end{align*}
Hence, $\dot x=0$ if and only if $F(x)=0$, implying that the set Sol$(\mathbb{R}^m,F)$ is the equilibrium set for the given system.

Consider the Lyapunov function
\[V(x) = \|F(x)\|^2 \quad\hbox{for all } x\in\mathbb{R}^m.\]
Since $\nabla V(x) = 2 \nabla F(x)^{\top}F(x),$
we have that
\[\dot V(x) = \langle \nabla V(x),\dot x\rangle = -2 F(x)^\top \nabla F(x)\nabla F(x)^\top F(x).\]
By the assumption on the eigenvalues of $\nabla F(x)\nabla F(x)^\top$, when $F(x)\ne0$, we have 
\[F(x)^\top \nabla F(x)\nabla F(x)^\top F(x)\ge \eta\|F(x)\|^2,\]
implying that for all $x$ with $F(x)\ne0$,
\[\dot V(x)\le - \eta \|F(x)\|^2.\]
The preceding inequality is trivially satisfied when $F(x)=0$, in which case $\dot V(x)=0$.
Thus, the preceding inequality holds for any $x$.
Using $V(x) = \|F(x)\|^2$, we have that
\[\dot V(x) \le - \eta V(x)\qquad\hbox{for all }x,\]
implying by
 the Gr\"onwall-Bellman inequality that 
\[V(x(t)) \le V(x(0)) e^{-\eta t}\qquad\hbox{for all }t\ge0.\]
Therefore, by using the assumption on the norm $\|F(x)\|$ of the mapping, since $V(x)=\|F(x)\|^2$, we obtain
for all $t>0$,
\begin{align}\label{eq-lowerb}
   \!\!\! \zeta^2{\rm dist}^2\left(x(t),{\rm Sol}(\mathbb{R}^m,F)\right) 
    \leq V(x(t))
    \le V(x(0)) e^{-\eta t}.
    \end{align}

Since $F$ is Lipschitz continuous, for any $x\in\mathbb{R}^m$ and any $y\in {\rm Sol}(\mathbb{R}^m,F)$,  we have $F(y)=0$ and
\[\|F(x)\|\le L\|x-y\|.\]
For a given $x\in\mathbb{R}^m$, let $y_x\in 
{\rm Sol}(\mathbb{R}^m,F)$ be a point such that ${\rm dist}\left(x,{\rm Sol}(\mathbb{R}^m,F)\right)=\|x-y_x\|$. Note that such a point $y_x$ exists since ${\rm Sol}(\mathbb{R}^m,F)$ is closed. Therefore, by letting $y=y_x$ in the preceding inequality, we obtain for all $x\in\mathbb{R}^m$,
\[\|F(x)\|\le L\|x-y_x\|=L\,{\rm dist}\left(x,{\rm Sol}(\mathbb{R}^m,F)\right).\]
Since $V(x)=\|F(x)\|^2$, for any $x(0)$ we have
\begin{align}\label{eq-upperb}
V(x(0))\le L^2{\rm dist}^2\left(x(0),{\rm Sol}(\mathbb{R}^m,F)\right).
 \end{align}
From relations~\eqref{eq-lowerb} and~\eqref{eq-upperb} we obtain that for all $t\ge0$,
\begin{align*}
    {\rm dist}(x(t),{\rm Sol}(\mathbb{R}^m,F))\leq \frac{L}{\zeta}\,{\rm dist}\left(x(0),{\rm Sol}(\mathbb{R}^m,F)\right) e^{-\frac{\eta}{2} t},
\end{align*}
which holds for any initial state $x(0)$.
Hence, 
the solution set Sol$(\mathbb{R}^m,F)$ is
exponentially stable for the stated
system. 
\end{proof}
\subsection{Fixed-time Stability}\label{Ssec: global-fts}
Here, we propose a continuous-time dynamic for which the solution set Sol$(\mathbb{R}^m,F)$ is fixed-time stable.

\begin{thm}\label{Thm:Design-continuous-dynamic-fixed-time-stable}
    Let the mapping $F: \mathbb{R}^m \to \mathbb{R}^m$ be continuously differentiable and Lipschitz continuous with a constant $L>0$. Assume that  Sol$(\mathbb{R}^m,F)$ is a nonempty set and
    there is some $\zeta>0$ such that 
\[\zeta {\rm dist}(x,{\rm Sol}(\mathbb{R}^m,F))\le \|F(x)\|\qquad\hbox{for all } x\in\mathbb{R}^m.\]
Consider the following dynamic: for some \textcolor{black}{$0<\gamma<1$},
\begin{equation}\label{eq:first-fxTS-sys}
    \dot{x} = - \left(\|F(x)\|^{-\gamma}+\|F(x)\| \right)\nabla F(x)^\top F(x),
\end{equation}
and assume that there is $\eta>0$ such that, whenever $F(x)\ne 0$, either  
\[\lambda_{\min}(\nabla F(x) \nabla F(x)^\top) \ge \eta,\]
or 
\[\lambda_{\min}(\nabla F(x) \nabla F(x)^\top) =0, \quad \lambda_2(\nabla F(x) \nabla F(x)^\top)\geq \eta,\]
        and $F(x)$ is orthogonal to the eigenspace corresponding to the eigenvalue 0 of $\nabla F(x) \nabla F(x)^\top$. 
Then, the set Sol$(\mathbb{R}^m,F)$ is FxTS 
for the given system with the following upper bound on the settling time
\[
\sup_{ x(0)\in \mathbb {R}^m} T(x(0))\le \frac{2}{\eta}\, \frac{1+\gamma}{\gamma}.\]
\end{thm}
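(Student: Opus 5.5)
The plan is to reuse the Lyapunov function from the proof of Theorem~\ref{Thm:Design-continuous-dynamic}, namely $V(x)=\|F(x)\|^2$, and verify the two hypotheses of Lemma~\ref{Lem-2-Kunal's paper} for the dynamic~\eqref{eq:first-fxTS-sys}.

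\textbf{Step 1: well-posedness and equilibria.} The vector field $f(x)=-(\|F(x)\|^{-\gamma}+\|F(x)\|)\nabla F(x)^\top F(x)$ is formally undefined where $F(x)=0$, so I set $f(x)=0$ there. Continuity of $f$ then follows from the bound
\[
\|f(x)\|\le \|\nabla F(x)\|\bigl(\|F(x)\|^{1-\gamma}+\|F(x)\|^2\bigr),
\]
which tends to $0$ as $F(x)\to 0$, because $0<\gamma<1$ and $\nabla F$ is continuous, hence locally bounded. Existence and uniqueness of trajectories is taken from the standing assumption of Section~\ref{Sec: Problem_Statement}. Next I identify the equilibrium set with ${\rm Sol}(\mathbb{R}^m,F)=\{x\mid F(x)=0\}$. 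When $F(x)\neq0$, the eigenvalue hypothesis gives $F(x)^\top\nabla F(x)\nabla F(x)^\top F(x)\ge\eta\|F(x)\|^2>0$, exactly as in the proof of Theorem~\ref{Thm:Design-continuous-dynamic}. Since the scalar factor $\|F(x)\|^{-\gamma}+\|F(x)\|$ is also positive, $f(x)\neq0$ there.

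\textbf{Step 2: condition (1) of Lemma~\ref{Lem-2-Kunal's paper}.} Clearly $V=0$ on ${\rm Sol}(\mathbb{R}^m,F)$. The error bound gives $V(x)\ge\zeta^2\,{\rm dist}^2(x,{\rm Sol}(\mathbb{R}^m,F))$, so we may take $\beta_1=\zeta^2$ and $\beta_2=2$. The function $V$ is $C^1$, hence locally Lipschitz, and $D^*V(x(t))=\langle\nabla V(x(t)),\dot x(t)\rangle$; alternatively, Lemma~\ref{lem: Second-cond} can be invoked here.

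\textbf{Step 3: condition (2) of Lemma~\ref{Lem-2-Kunal's paper}.} For $F(x)\ne0$ I compute
\[
\dot V=-2\bigl(\|F\|^{-\gamma}+\|F\|\bigr)F^\top\nabla F\nabla F^\top F\le -2\eta\bigl(\|F\|^{2-\gamma}+\|F\|^{3}\bigr)=-2\eta\bigl(V^{1-\gamma/2}+V^{3/2}\bigr).
\]
At points with $F=0$ the inequality holds trivially, since both sides are $0$. This is condition (2) with $\theta=1$, $p=q=2\eta$, $\alpha=1-\gamma/2<1$ and $\beta=3/2>1$.

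\textbf{Step 4: settling time.} Lemma~\ref{Lem-2-Kunal's paper} then yields fixed-time stability with
\[
T(x(0))\le\frac{1}{2\eta\cdot\gamma/2}+\frac{1}{2\eta\cdot 1/2}=\frac{1+\gamma}{\eta\gamma}.
\]
This is bounded by the claimed $\frac{2}{\eta}\frac{1+\gamma}{\gamma}$; using the cruder constants $p=q=\eta$ would give exactly that figure.

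\textbf{Main obstacle.} The step I expect to need the most care is Step 1. The field is non-Lipschitz at the solution set, because of the factor $\|F\|^{-\gamma}$, so uniqueness of trajectories is not automatic. I would rely on the paper's standing uniqueness assumption rather than prove it. I would also check that the Lyapunov inequality in Step 3 holds for all $t\ge0$, including after the trajectory reaches the solution set, where $V\equiv0$ and the inequality holds trivially.
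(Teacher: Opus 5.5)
Your proposal is correct and follows essentially the same route as the paper. Both arguments use $V=\|F\|^2$, the error bound for condition (1), and the eigenvalue hypothesis for condition (2) with $\theta=1$, $\alpha=1-\gamma/2$, $\beta=3/2$, before invoking Lemma~\ref{Lem-2-Kunal's paper}. The only difference is that you keep the factor $2$ in $\dot V$, so $p=q=2\eta$ and you get the sharper bound $\frac{1+\gamma}{\eta\gamma}$; the paper drops that factor ($p=q=\eta$) and obtains exactly the stated $\frac{2}{\eta}\frac{1+\gamma}{\gamma}$.
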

\begin{proof} 
{\color{black} The modified dynamics in \eqref{eq:first-fxTS-sys} is not locally Lipschitz continuous because of the term with the $-\gamma$ exponent. However, the dynamics is continuous, and the existence and uniqueness of the solutions to~\eqref{eq:first-fxTS-sys} follow from a similar argument to that of\cite[Proposition 1]{garg2022fixed}.} The convergence proof relies on Lemma~\ref{Lem-2-Kunal's paper}. Note that ${\rm Sol}(\mathbb{R}^m,F)=\{x\in\mathbb{R}^m\!\mid\! F(x)=0\}$ and this set is closed since $F$ is continuous mapping.
Next, we show that ${\rm Sol}(\mathbb{R}^m,F)$ is
the equilibrium set of the system in~\eqref{eq:first-fxTS-sys}.
For the system in~\eqref{eq:first-fxTS-sys}, we have for all $x\in\mathbb{R}^m$,
\[\|\dot x\|^2
=\left(\|F(x)\|^{-\gamma}+\|F(x)\| \right)^2 F(x)^\top\nabla F(x)\nabla F(x)^\top F(x).\]
Therefore,
\[\|\dot x\|^2= 0\qquad\hbox{if }F(x)=0,\]
while by the assumption on the eigenvalues of the matrices $\nabla F(x)\nabla F(x)^\top$, we have
\[\|\dot x\|^2
\ge \eta\! \left(\|F(x)\|^{-\gamma}\!+\!\|F(x)\| \right)^2 \|F(x)\|^2>\!0
\quad\hbox{if }F(x)\ne0.\]
Hence, $\dot x=0$ if and only if $F(x)=0$, implying that the set Sol$(\mathbb{R}^m,F)$ is  the equilibrium set for \eqref{eq:first-fxTS-sys}. 

Now, we consider the following Lyapunov function:
\[V(x) = \|F(x)\|^2 \qquad\hbox{for all }x\in\mathbb{R}^m.\]
By continuous differentiability of the mapping $F$, the function $V$ is also continuously differentiable, hence, it is locally Lipschitz continuous. 
Since
\[\{x\in\mathbb{R}^m\!\mid\! V(x)\!=0\}=\{x\in\mathbb{R}^m\!\mid\! F(x)=0\}={\rm Sol}(\mathbb{R}^m,F),\]
and \[\zeta^2 {\rm dist}^2(x,{\rm Sol}(\mathbb{R}^m,F))\le \|F(x)\|^2\qquad\hbox{for all } x\in\mathbb{R}^m,\] 
condition~(1) of Lemma~\ref{Lem-2-Kunal's paper} is satisfied.

Next, we show that condition (2) of Lemma~\ref{Lem-2-Kunal's paper} is satisfied.
First, we note that the function $t\mapsto V(x(t))$, $t\ge0$, is differentiable,
so $D^*(V(x(t))=\langle\nabla V(x(t)),\dot x(t)\rangle$ for all $t\ge0$.
Since
$\nabla V(x) = 2 \nabla F(x)^{\top}F(x)$ for all $x\in\mathbb{R}^m$,
by using the dynamic system equation~\eqref{eq:first-fxTS-sys}, we obtain
\begin{align*}
   &\dot V(x) = \langle \nabla  V(x), \dot x\rangle
   =
-2 \left(\|F(x)\|^{-\gamma}+\|F(x)\| \right)F(x)^\top \nabla F(x)\nabla F(x)^\top F(x).
\end{align*}
Using the assumption on the eigenvalues of $\nabla F(x)\nabla F(x)^\top$, when $F(x)\ne0$, we have
\[F(x)^\top \nabla F(x)\nabla F(x)^\top F(x)\ge \eta\|F(x)\|^2,\] so 
we obtain for all $x$ with $F(x)\ne0$,
\begin{eqnarray}\label{eq-th2-aux-1}
\dot V(x) \le - \eta (\|F(x)\|^{2-\gamma}+\|F(x)\|^3).\nonumber
\end{eqnarray}
Since $\dot V(x)= 0$ when $F(x)=0$,  the preceding relation is trivially satisfied (as equality). Thus, the preceding inequality holds for all $x\in\mathbb{R}^m$, and using $V(x) = \|F(x)\|^2$, we have
\[\dot V(x) \le - \eta \left(V(x)^{1-\frac{\gamma}{2}}+V(x)^{\frac{3}{2}}\right) \qquad\hbox{for all }x\in\mathbb{R}^m.\]
Thus, the condition (2) of Lemma~\ref{Lem-2-Kunal's paper} is satisfied
with $p=q=\eta$, $\theta=1$, $\alpha=1-\gamma/2\in (0, 1)$ given the bounds on $\gamma$, and $\beta=3/2$.
Since all the conditions of  Lemma~\ref{Lem-2-Kunal's paper} are satisfied, by the lemma, the solution set ${\rm Sol}(\mathbb{R}^m, F)$ is
fixed-time stable with the following upper bound on the settling time
\[\sup_{ x(0)\in \mathbb {R}^m} T(x(0)) \le \frac{2}{\eta \gamma} + \frac{2}{\eta},\]
which yields the stated bound.
\end{proof}

\section{Constrained VI}\label{sec:contrained}
Here, we consider a non-monotone VI$(K,F)$ and propose continuous-time dynamics for which the solution set 
Sol$(K,F)$ is globally exponentially and fixed-time stable.

\subsection{Exponential Stability}
We focus on a VI$(K,F)$ with a (nonempty) closed and convex set $K\subset\mathbb{R}^m$ and a mapping $F:K\to\mathbb{R}^m$. 
The discrete-time Korpelevich method is given by \sa{\cite[Eq.~(2)]{korpelevich1976extragradient}}
\begin{align*}
   y_k &= \Pi_K[x_k - \alpha F(x_k)],\\
    x_{k+1} &= \Pi_K[x_k - \alpha F(y_k)],
    \end{align*}  
initialized with $x_0,y_0\in K$, where $\alpha>0$ is a stepsize. 
We consider a continuous-time variant of this method in the following form\footnote{Given Lipschitz continuity of $F$, the dynamic is Lipschitz by Lipschitzness of the projection operator with the Lipschitz constant $L=1$. The differentiability of $F$ similarly implies local Lipschitzness of the dynamic.}:
\begin{subequations}\label{eq: ct Korpelevich}
    \begin{align}
        \dot y & = \Pi_K[x-\alpha F(x)]-y,\\
        \dot x & = \Pi_K[x-\alpha F(y)]-x,
    \end{align}
\end{subequations}
with $x(0),y(0)\in K$, where $\alpha>0$ is some constant.
The iterates of the discrete-time Korpelevich method are always in the set $K$ where the mapping is defined.  For the continuous-time Korpelevich method~\eqref{eq: ct Korpelevich}, it is not obvious if the trajectories $x(\cdot)$ and $y(\cdot)$ remain in the set~$K$.
In the following lemma, for a closed convex set $K$, we show that this is indeed the case provided that the trajectories are initialized in the set $K$. 
\begin{lem}\label{lem:traj-in K} 
Let $K\subset\mathbb{R}^m$ be a closed convex set. Then,
    the set $K\times K$ is positively invariant with respect to the continuous-time Korpelevich system in~\eqref{eq: ct Korpelevich}.
\end{lem}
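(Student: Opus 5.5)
The plan is a variation-of-constants argument. Write the system in~\eqref{eq: ct Korpelevich} as $\dot y = u(t)-y$ and $\dot x = v(t)-x$, where
\[
u(t)=\Pi_K[x(t)-\alpha F(x(t))], \qquad v(t)=\Pi_K[x(t)-\alpha F(y(t))].
\]
Both inputs lie in $K$ for every $t$ by definition of the projection, and they are continuous in $t$ along the solution. Assuming the solution exists on $[0,+\infty)$ and is unique, as stipulated after~\eqref{Eq:Sys}, each component is an affine ODE driven by a $K$-valued input. The explicit solution is
\[
y(t)=e^{-t}y(0)+\int_0^t e^{-(t-s)}u(s)\,ds,
\]
and likewise for $x(t)$ with $v$ in place of $u$.

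The key step is to show this expression is a point of $K$. Since $e^{-t}+\int_0^t e^{-(t-s)}ds=1$, the right-hand side is a weighted average of $y(0)\in K$ and the values $u(s)\in K$ for $s\in[0,t]$, with weights $e^{-t}$ and the probability density $e^{-(t-s)}/(1-e^{-t})$ on $[0,t]$. I would argue membership by splitting it into two parts:
\begin{itemize}
\item The normalized integral $\frac{1}{1-e^{-t}}\int_0^t e^{-(t-s)}u(s)\,ds$ belongs to $K$ because $K$ is closed and convex. This is the limit of Riemann sums, each of which is a convex combination of points of $K$, and closedness passes to the limit. Alternatively, if the average $\bar u$ were not in $K$, separate it strictly from $K$ by a hyperplane and integrate the separating inequality over $s$ to get a contradiction.
\item Then $y(t)$ is the convex combination $e^{-t}y(0)+(1-e^{-t})\bar u$ of two points of $K$, so it lies in $K$.
\end{itemize}
The same argument applies verbatim to $x(t)$. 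Since $t\ge 0$ was arbitrary, $K\times K$ is positively invariant.

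The main obstacle is justifying the averaged-integral membership cleanly, i.e., that the integral average of a continuous $K$-valued function lies in the closed convex set $K$. I would use the separating-hyperplane argument, as it avoids Riemann-sum limits.

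A second subtlety is that $F$ is only defined on $K$. The argument is therefore circular unless the trajectory is already known to stay in $K$ while we apply it. I would handle this in one of two ways:
\begin{itemize}
\item Assume $F$ extends continuously to $\mathbb{R}^m$, as the Lipschitz footnote implicitly does, and apply the argument globally.
\item Work with the maximal interval on which the solution stays in $K$ and show by the formula above that this interval is all of $[0,+\infty)$.
\end{itemize}
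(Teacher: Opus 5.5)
Your proposal is correct and matches the paper's proof. The paper stacks $z=(y,x)$, writes $\dot z=H(z)-z$, derives $z(t)=e^{-t}z(0)+(1-e^{-t})\int_0^t H(z(s))\frac{e^s}{e^t-1}\,ds$ (the same weights as yours), and concludes by convexity of $K\times K$, exactly as you do componentwise.

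Your separation argument for the averaged integral, and your remark that $F$ is defined only on $K$, fill in points the paper leaves implicit. Of your two fixes for the domain issue, the first is the clean one, and it needs no extra hypothesis because $F\circ\Pi_K$ is always a continuous extension of $F$ (Lipschitz whenever $F$ is). The maximal-interval option, by contrast, still needs a way to restart the trajectory from boundary points of $K\times K$.
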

\begin{proof} We write the system in~\eqref{eq: ct Korpelevich} in an alternative form.
Define $z_1=y$, $z_2=x$, and $z=[z_1^\top,z_2^\top]^\top\in K\times K$. Then, we define the mapping $H(z)=[H_1(z)^\top,H_2(z)^\top]^\top:K\times K\to\mathbb{R}^m\times \mathbb{R}^m$ with components $H_1(z)=\Pi_K[z_2-\alpha F(z_2)]$ and $H_2(z)=\Pi_K[z_2-\alpha F(z_1)]$. Then, the system in~\eqref{eq: ct Korpelevich} has the following form:
\begin{align}\label{eq: ct Korpelevich-alt}
        \dot z & = H(z)-z.
\end{align}
Let $t\ge0$ be arbitrary.
    Multiplying \eqref{eq: ct Korpelevich-alt} by $e^t$, we obtain
    \[(\dot z(t)+z(t))e^t=H(z(t))e^t,\]
    implying  that
    \[\int_0^td(z(s)e^s)=\int_0^tH(z(s))e^sds.\]
    Therefore,
    \[z(t)e^t=z(0)+\int_0^tH(z(s))e^sds,\]
    and consequently,
    \[z(t)=\frac{z(0)}{e^t}+\frac{e^t-1}{e^t}\int_0^tH(z(s))\frac{e^s}{e^t-1}ds.\]
    We view the integral $\int_0^tH(z(s))\frac{e^s}{e^t-1}ds$ as the expected value of $H(z(s))$ for $s\in [0,t]$ with respect to the probability distribution function $g(s)=\frac{e^s}{e^t-1}$ for $s\in [0,t]$. Since $K$ is convex, so is the set $K\times K$. Therefore, the integral value $\int_0^tH(z(s))\frac{e^s}{e^t-1}ds$ also belongs to
    $K\times K$. Given $z(0)\in K\times K$, the point $z(t)$ is a convex combination of two vectors in $K\times K$, so it belongs to $K\times K$ since $K\times K$ is convex. Thus, the set $K\times K$ is positively invariant for the Korpelevich system in~\eqref{eq: ct Korpelevich}.
\end{proof}

To proceed with the stability properties of the Korpelevich system in~\eqref{eq: ct Korpelevich},
we use an alternative characterization of the solution set of  VI$(K,F)$, by using a {\it scaled} natural mapping $F_{K,\tau}^{\rm nat}:K\to \mathbb{R}^m$,  with  $\tau>0$, defined by 
\begin{equation}\label{eq-def-fnat}
    F_{K,\tau}^{\rm nat}(x)=x-\Pi_K\left[x-\tau F(x)\right]
\qquad\hbox{for all $x\in K$}.
\end{equation}
The following result relates the solutions of the VI$(K,F)$ problem with the zeros of the scaled natural mapping $F^{\rm nat}_{K,\tau}$.
\begin{lem}\label{lem-auxiliary-natural map solution}
    Let $K\subseteq\mathbb R ^m$ be a closed convex set, and let
    $F:K\to \mathbb{R}^m$ be a mapping. Then, we have for any $\tau>0$,
    \begin{equation*}
        x^*\in {\rm Sol}(K,F)\quad \iff \quad F_{K,\tau}^{\rm nat}(x^*)=0.
    \end{equation*}
\end{lem}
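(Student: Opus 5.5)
The plan is to reduce both directions to the variational characterization of the Euclidean projection onto a closed convex set: for any $z\in\mathbb{R}^m$, a point $p\in K$ satisfies $p=\Pi_K[z]$ if and only if
\[\langle z-p,\,x-p\rangle\le 0\qquad\hbox{for all } x\in K.\]
This is the standard obtuse-angle property. It follows from the first-order optimality condition for the strongly convex problem $\min_{x\in K}\tfrac12\|x-z\|^2$, whose minimizer exists and is unique because $K$ is nonempty, closed and convex. I will state this property at the outset and use it with $z=x^*-\tau F(x^*)$ and $p=x^*$.

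\emph{($\Rightarrow$)} Let $x^*\in{\rm Sol}(K,F)$, so $x^*\in K$ and $\langle F(x^*),x-x^*\rangle\ge0$ for all $x\in K$. Multiplying by $\tau>0$ gives
\[\langle (x^*-\tau F(x^*))-x^*,\;x-x^*\rangle=-\tau\langle F(x^*),x-x^*\rangle\le 0\quad\hbox{for all } x\in K.\]
Since $x^*\in K$, the characterization yields $x^*=\Pi_K[x^*-\tau F(x^*)]$. By the definition in~\eqref{eq-def-fnat}, this is exactly $F^{\rm nat}_{K,\tau}(x^*)=0$.

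\emph{($\Leftarrow$)} Suppose $x^*\in K$ (the domain of $F^{\rm nat}_{K,\tau}$) and $F^{\rm nat}_{K,\tau}(x^*)=0$, that is, $x^*=\Pi_K[x^*-\tau F(x^*)]$. The projection inequality then gives $-\tau\langle F(x^*),x-x^*\rangle\le0$ for all $x\in K$. Dividing by $\tau>0$ gives the VI inequality, so $x^*\in{\rm Sol}(K,F)$.

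There is no real obstacle. The only points needing care are recalling or justifying the projection characterization, and noting that membership $x^*\in K$ is built in: in one direction it holds by definition of the solution set, and in the other it holds either because the natural map is defined on $K$ or because $x^*$ equals a projection onto $K$.
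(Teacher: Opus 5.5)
Your proof is correct and follows essentially the same approach. The paper gives no written argument and cites Facchinei--Pang (Proposition~1.5.8 with $A=\tau^{-1}\mathbb{I}$), whose proof rests on the same variational characterization of the Euclidean projection you apply with $z=x^*-\tau F(x^*)$ and $p=x^*$, so your self-contained version, including the point that $x^*\in K$ is built into both directions, can replace that citation.
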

\begin{proof} See \cite[Proposition~1.5.8]{facchinei2003finite}, and the discussion therein on page 85, where one can use $A=\tau^{-1}\mathbb{I}$.
\end{proof}

Next, we relate the equilibrium points of the Korpelevich system in~\eqref{eq: ct Korpelevich}  with the solution set of VI$(K,F)$.
\begin{lem}\label{lem-auxiliary-equilibria}
Let $K\subseteq \mathbb{R}^m$ be a closed convex set and $F:K\to \mathbb{R}^m$ be a Lipschitz continuous mapping with a Lipschitz constant $L>0$. Then, for any initial points $x(0),y(0)\in K$ and $0<\alpha < \frac{1}{L}$, the equilibrium set of the Korpelevich system in~\eqref{eq: ct Korpelevich}  coincides with the set $\{(x^*,x^*)\mid x^*\in{\rm Sol}(K,F)\}$.
\end{lem}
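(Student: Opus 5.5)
We prove the two inclusions between the equilibrium set of \eqref{eq: ct Korpelevich} and the set $\{(x^*,x^*)\mid x^*\in{\rm Sol}(K,F)\}$. Since Lemma~\ref{lem:traj-in K} gives invariance of $K\times K$, we work with points $(x,y)\in K\times K$, where $F$ is defined.

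\emph{Solutions give equilibria.} Take $x^*\in{\rm Sol}(K,F)$ and set $x=y=x^*$. Lemma~\ref{lem-auxiliary-natural map solution} with $\tau=\alpha$ gives $F^{\rm nat}_{K,\alpha}(x^*)=0$, that is, $\Pi_K[x^*-\alpha F(x^*)]=x^*$. Substituting into \eqref{eq: ct Korpelevich}, both right-hand sides equal $x^*-x^*=0$. Hence $(x^*,x^*)$ is an equilibrium.

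\emph{Equilibria give solutions.} Let $(x,y)\in K\times K$ be an equilibrium. Then
\[
y=\Pi_K[x-\alpha F(x)], \qquad x=\Pi_K[x-\alpha F(y)].
\]
It suffices to show $x=y$. Once this holds, the first equation becomes $x=\Pi_K[x-\alpha F(x)]$, so $F^{\rm nat}_{K,\alpha}(x)=0$, and Lemma~\ref{lem-auxiliary-natural map solution} yields $x\in{\rm Sol}(K,F)$.

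To get $x=y$, use the nonexpansiveness of $\Pi_K$ on the closed convex set $K$:
\[
\|y-x\|=\bigl\|\Pi_K[x-\alpha F(x)]-\Pi_K[x-\alpha F(y)]\bigr\|\le \alpha\|F(x)-F(y)\|\le \alpha L\|x-y\|.
\]
Because $\alpha L<1$, this forces $\|x-y\|=0$, so $x=y$. This step is the only place where the condition $\alpha<1/L$ is used.

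The argument involves no real obstacle. The one point needing care is that the equilibrium must lie in $K\times K$ so that $F(x)$ and $F(y)$ are defined. This holds because the dynamics are considered on $K\times K$, and any equilibrium automatically satisfies $y,x\in\Pi_K[\cdot]\subseteq K$ by the two fixed-point equations.
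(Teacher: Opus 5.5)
Your proposal is correct and follows essentially the same route as the paper: nonexpansiveness of $\Pi_K$ and Lipschitz continuity give $\|x-y\|\le\alpha L\|x-y\|$, which forces $x=y$, and Lemma~\ref{lem-auxiliary-natural map solution} handles both directions. The only cosmetic difference is that, after obtaining $x=y$, you substitute into the $\dot y$-equilibrium equation while the paper substitutes into the $\dot x$ one; both give $F^{\rm nat}_{K,\alpha}(x)=0$.
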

\begin{proof}
A point $(\bar x,\bar y)\in K\times K$ is an equilibrium of \eqref{eq: ct Korpelevich} if and only if the following relations holds:
\begin{subequations}\label{eq: ct Korpelevich-equilibrium}
    \begin{align}
       \bar y & =\Pi_K[\bar x-\alpha F(\bar x)],\\
       \bar x & =\Pi_K[\bar x-\alpha F(\bar y)].
    \end{align}
\end{subequations}

Suppose that $(\bar x,\bar y)\in K\times K$ is an equilibrium point of \eqref{eq: ct Korpelevich}. By~\eqref{eq: ct Korpelevich-equilibrium}, using the non-expansiveness property of the projection and Lipschitz continuity of the mapping $F$, we obtain 
\[\|\bar x-\bar y\|\leq \alpha\|F(\bar x)-F(\bar y)\|\le \alpha L\|\bar x-\bar y\|.\] 
Since $\alpha L<1$ it follows that $\bar x= \bar y$, which when used in \eqref{eq: ct Korpelevich-equilibrium}(b) yields $F_{K,\alpha}^{\rm nat}(\bar x)=0$. By Lemma~\ref{lem-auxiliary-natural map solution}, we have $\bar x\in {\rm Sol}(K,F)$. 

 Assume now that $\bar x\in {\rm Sol}(K,F)$. By Lemma \ref{lem-auxiliary-natural map solution} we have $\bar x=\Pi_K[\bar x-\alpha F(\bar x)]$, and with $\bar y=\bar x$,
we can obtain \eqref{eq: ct Korpelevich-equilibrium}.
\end{proof}

To investigate the stability properties of the Korpelevich system in~\eqref{eq: ct Korpelevich}, 
we will use a relaxation of $\eta$-quasi strongly monotone assumption for a VI($K,F$)~\cite[Assumption~2.1]{zhang2023communication}.
{\color{black}    \begin{assum}\label{Assum-nu-quasi-monotone}
    Let $K\subseteq\mathbb{R}^m$, $F:K\to\mathbb{R}^m$, and Sol$(K,F)$ be a nonempty. For every $x\in K$, let 
     $S(x)$ be the set of points in {\rm cl(Sol($K,F$))} that attain the distance from $x$ to the closure of the set {\rm Sol($K,F$)}.
     Assume that there is some $\eta>0$ such that, for all $x\in K$, there is $v^*_x\in S(x)$ satisfying the 
     following relation
    \[\langle F(x)-F(v^*_x),x-v^*_x\rangle\!\!\geq \eta\,\dist^2(x,\!{\rm Sol}(K,F)).\]
\end{assum}
}

\an{The set $S(x)$ of the projection points of $x$ on the closure of Sol$(K,F)$ is nonempty and compact for all $x$. In all the results that follow, we deal with a closed set $K$ and a continuous mapping 
$F:K\to\mathbb{R}^m$, so the solution set Sol$(K,F)$ is always closed while possibly empty. Thus, in Assumption~\ref{Assum-nu-quasi-monotone}, the closure of Sol$(K,F)$ will not be needed.}

\an{When VI$(K,F)$ has a unique solution $x^*$, then $S(x)=\{x^*\}$ for all $x$, and the condition of Assumption~\ref{Assum-nu-quasi-monotone} reduces to
\[\langle F(x)-F(x^*),x-x^*\rangle\geq \eta\,\|x-x^*\|^2\quad\hbox{for all } x\in K.\]
}
We represent the system in~\eqref{eq: ct Korpelevich} in the following form:
\begin{subequations}\label{eq: ct Korpelevich-v3vector}
    \begin{align}
        &\dot y  = H_1(x,y),  \qquad \dot x  = H_2(x,y),\\
        & H_1(x,y)=\Pi_K[x-\alpha  F(x)]-y,\\
        & H_2(x,y) = \Pi_K[x-\alpha  F(y)]-x,
    \end{align}
\end{subequations}
with $x(0)$,$y(0)\in K$. As VI$(K,F)$ may have multiple solutions,
we  consider a non-smooth Lyapunov function involving the squared distance function ${\rm dist}^2(z,U)$ from a vector $z\in \mathbb{R}^m$ to a closed set $U\subset\mathbb{R}^m$. 
The function ${\rm dist}^2(z,U)$ is locally Lipschitz, and the Clarke subdifferential $\partial \,{\rm dist}^2(z,U)$ is nonempty, convex, and compact set for all $z\in \mathbb{R}^m$ \cite[Proposition~2.1.2]{clarke1990optimization}. In particular, we have the following result.

\begin{lem}\label{lem-sudiff-dist2}
Let $U\subset\mathbb{R}^m$ be a nonempty closed set. Then, the subdifferential $\partial \,{\rm dist}^2(\cdot,U)$ is given by
\[
\partial{\rm dist}^2(z,U)\!=\!\!\left\{\begin{array}{ll}
    \!\! \! 2\,\left(z-{\rm conv}(U_z)\right) &\!\! \hbox{if $z\notin U$},\cr
\!\! \! \an{\{0\}} & \!\!\hbox{if $z\in U$},
\end{array}\right.
\]
where $U_z$ is the compact subset of $U$ consisting of the points that attain the minimum distance from $z$ to the set $U$.
\end{lem}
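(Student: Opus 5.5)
We compute the Clarke subdifferential from its gradient characterization recalled in Section~\ref{Sec: Problem_Statement}: $\partial V(z)$ is the convex hull of limits of gradients $\nabla V(z_k)$ along sequences $z_k\to z$ avoiding the nondifferentiability set and any measure-zero set. Here $V(z)={\rm dist}^2(z,U)$. The plan has four steps: establish the gradient formula at differentiability points, pass to limits of gradients, take convex hulls, and treat $z\in U$ separately.

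\emph{Step 1 (gradients where $V$ is differentiable).} Write $V(z)=\|z\|^2-\phi(z)$, where
\[
\phi(z)=\sup_{u\in U}\left(2\langle u,z\rangle-\|u\|^2\right).
\]
As a supremum of affine functions, $\phi$ is convex, finite and locally Lipschitz. By Danskin's theorem, restricted to the compact set $U_z$ (local boundedness of minimizers lets us restrict to a compact piece of $U$), its convex subdifferential is $\partial\phi(z)=2\,{\rm conv}(U_z)$. Since $V$ is a smooth function minus a convex one, $V$ is differentiable at $z$ exactly when $\phi$ is, which happens exactly when $U_z$ is a singleton $\{p(z)\}$. In that case $\nabla V(z)=2(z-p(z))$. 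By Rademacher's Theorem~\ref{The:Rademacher} this holds almost everywhere.

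\emph{Step 2 (limits of gradients).} Take $z_k\to z$ with $U_{z_k}=\{p(z_k)\}$. The points $p(z_k)$ are bounded, since $\|p(z_k)\|\le \|z_k\|+{\rm dist}(z_k,U)$. Any cluster point $p$ lies in $U$ because $U$ is closed, and continuity of ${\rm dist}$ gives $\|z-p\|={\rm dist}(z,U)$, so $p\in U_z$. Hence every limiting gradient has the form $2(z-p)$ with $p\in U_z$, and the convex hull gives $\partial V(z)\subseteq 2(z-{\rm conv}(U_z))$.

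\emph{Step 3 (reverse inclusion, the main obstacle).} We must show that every extreme point $2(z-p)$, $p\in U_z$, is actually attained as a limiting gradient. The first route is to use the identity $\partial V=2z-\partial\phi$, which holds because the Clarke subdifferential of $\|\cdot\|^2-\phi$ equals $2z+\partial(-\phi)=2z-\partial\phi$ (Clarke's sum rule with a $C^1$ term, plus $\partial(-\phi)=-\partial\phi$). Since $\phi$ is convex, its Clarke and convex subdifferentials agree, and Danskin's theorem gives the result. A direct alternative: for $p\in U_z$, take $z_k=z+\tfrac1k(p-z)$. Every point of $U_{z_k}$ other than $p$ would contradict the triangle inequality with equality, so $U_{z_k}=\{p\}$ and $\nabla V(z_k)=2(z_k-p)\to 2(z-p)$. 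Nearby differentiability points can be chosen by local Lipschitz continuity if these $z_k$ fall into the excluded null set. The Danskin route is cleaner, so I would take that.

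\emph{Step 4 (case $z\in U$).} Here $U_z=\{z\}$, so the formula gives $2(z-z)=0$ and $\partial V(z)=\{0\}$. This is consistent with the general expression; it can also be seen from $|V(w)|\le\|w-z\|^2$, since limiting gradients $2(z_k-p(z_k))$ have norm $2\,{\rm dist}(z_k,U)\le 2\|z_k-z\|\to0$.
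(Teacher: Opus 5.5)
Your proof is correct, but it follows a different route from the paper's.

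\textbf{The paper's route.} It quotes the subdifferential of ${\rm dist}(\cdot,U)$ from Rockafellar--Wets (Example~8.53): $\{(z-v)/{\rm dist}(z,U)\mid v\in U_z\}$ for $z\notin U$, and $N_U(z)\cap\mathbb{B}(0,1)$ for $z\in U$. It then applies Clarke's chain rule with the strictly differentiable outer map $s\mapsto s^2$. The factor $2\,{\rm dist}(z,U)$ cancels the denominator off $U$ and annihilates the normal-cone term on $U$. A final step rewrites ${\rm conv}(\{z\}-U_z)$ as $z-{\rm conv}(U_z)$.

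\textbf{Your route.} You write ${\rm dist}^2(z,U)=\|z\|^2-\phi(z)$ with $\phi$ convex, and obtain $\partial V(z)=2z-\partial\phi(z)=2(z-{\rm conv}(U_z))$ from four facts: the exact sum rule with a $C^1$ term, $\partial(-\phi)=-\partial\phi$, the agreement of Clarke and convex subdifferentials for convex $\phi$, and Danskin's rule. For Danskin, $\phi$ must be localized to a compact piece $U\cap\bar{B}(z,R)$; it is over this set, not over $U_z$ itself, that $\phi$ is locally a maximum.

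\textbf{What each buys.} Your version is self-contained and treats $z\in U$ and $z\notin U$ with a single formula, since $U_z=\{z\}$ on $U$. It also works directly with the Clarke subdifferential. The paper's cited formula is for the limiting subdifferential, and the needed convexification enters only implicitly when ${\rm conv}$ appears after the chain rule. In return, the paper's proof is shorter because it relies on citations.

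\textbf{Two small remarks.} Once you take the Danskin route, Steps~1--2 are superfluous. In the direct alternative of Step~3, moving off the excluded null set cannot be justified by local Lipschitz continuity of $V$, which gives no control on gradients. Use instead the outer semicontinuity of $w\mapsto U_w$ (which your Step~2 provides), or the fact that Clarke's gradient formula does not depend on the excluded null set.
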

\begin{proof}
The proof is in Appendix~\ref{ap-proof-subdiff}.
\end{proof}

Next, we discuss the Lie derivative.
Let $h:\mathbb{R}^m\to \mathbb{R}^m$, and consider the differential equation $\dot z=h(z)$.
The Lie derivative of a \sa{locally Lipschitz} function $V:\mathbb{R}^m\to \mathbb{R}$ with respect to the mapping $h$ is given by 
\[\mathcal{L} _h V(z):=\{\langle\xi, \dot z\rangle\mid \xi\in \partial V(z)\}.\] 
\an{The Lie derivative $\mathcal{L}_h V(z)$ of $V(z)$ with respect to $h$ is a compact set.  To see this, note that 
$\partial V(z)$ is nonempty compact convex set by \cite[Proposition~2.1.2]{clarke1990optimization}.
Thus, by using
the Cauchy-Schwarz inequality, we can see that the set $\mathcal{L} _h V(z)$ is bounded since $\partial V(z)$ is bounded for any $z \in \mathbb{R}^m$. By the compactness of the set $\partial V(z)$ and the continuity and linearity of the inner product, the set $\mathcal{L} _h V(z)$ is compact. Moreover, we have the following result that we will apply to $\max_{\xi\in\conv(\partial V(z))} \mathcal{L} _h V(z)$.}

\sa{
\begin{lem}\cite[Theorem~9.4.6]{narici2010topological}\footnote{
    The theorem is for sets in a locally convex Hausdorff space. The set $\mathbb{R}^m$ with the standard Euclidean norm is a locally convex Hausdorff space.}
\label{lemma-Krein-Milman}
    Let $C\subset\mathbb{R}^m$ be a compact convex set and let $B\subset C$. Then, $C={\rm cl}(\conv(B))$ if and only if the extreme
    points\footnote{A point $x$ is an extreme point of a convex set $X$ if $x$ cannot be obtained as a convex combination of two other distinct points in the set $X$.}  of $C$ are contained in the set ${\rm cl(B)}$.
\end{lem}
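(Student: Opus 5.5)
The statement to prove is the Krein--Milman type characterization (Lemma~\ref{lemma-Krein-Milman}): for a compact convex $C\subset\mathbb{R}^m$ and $B\subset C$, we have $C={\rm cl}(\conv(B))$ if and only if every extreme point of $C$ lies in ${\rm cl}(B)$. I would prove the two directions separately. The only finite-dimensional tools needed are the Krein--Milman (Minkowski) theorem, which says that a compact convex set in $\mathbb{R}^m$ is the convex hull of its extreme points, and Carathéodory's theorem.

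\emph{Sufficiency.} Assume the extreme points ${\rm ext}(C)$ are contained in ${\rm cl}(B)$. Since $B\subset C$ and $C$ is closed and convex, we have ${\rm cl}(\conv(B))\subseteq C$. For the reverse inclusion, Minkowski's theorem gives $C=\conv({\rm ext}(C))\subseteq \conv({\rm cl}(B))$. It remains to show $\conv({\rm cl}(B))\subseteq{\rm cl}(\conv(B))$. By Carathéodory, any point of $\conv({\rm cl}(B))$ is a finite convex combination $\sum_i\lambda_i b_i$ with $b_i\in{\rm cl}(B)$. Approximating each $b_i$ by points $b_i^k\in B$ gives $\sum_i\lambda_i b_i^k\in\conv(B)$ converging to the point. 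Hence $C={\rm cl}(\conv(B))$.

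\emph{Necessity.} Assume $C={\rm cl}(\conv(B))$ and let $e$ be an extreme point of $C$. Pick $x_k\in\conv(B)$ with $x_k\to e$. By Carathéodory, $x_k=\sum_{i=1}^{m+1}\lambda_i^k b_i^k$ with $b_i^k\in B\subset C$. Because $C$ is compact and the simplex is compact, we may pass to a subsequence along which $\lambda_i^k\to\lambda_i$ and $b_i^k\to b_i\in{\rm cl}(B)\subseteq C$. This yields $e=\sum_i\lambda_i b_i$ with each $b_i\in C$.

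Since $e$ is extreme, every $b_i$ with $\lambda_i>0$ must equal $e$. This follows by the usual induction from the two-point definition: write $e=\lambda_j b_j+(1-\lambda_j)w$, where $w$ is the normalized combination of the remaining terms and lies in $C$ by convexity. Extremality forces $b_j=w=e$ whenever $0<\lambda_j<1$, and the case $\lambda_j=1$ is trivial. Consequently $e\in{\rm cl}(B)$.

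The main obstacle is this last extremality step, specifically handling the limiting convex combination cleanly. Compactness in finite dimension (via Carathéodory's bounded number of terms) is what makes the subsequence extraction work, and I would present that step carefully.
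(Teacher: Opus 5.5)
Your proof is correct, but it takes a different route from the paper. The paper offers no argument of its own and simply invokes \cite[Theorem~9.4.6]{narici2010topological}, i.e., the Krein--Milman theorem together with Milman's converse in an arbitrary locally convex Hausdorff space. You instead use finite dimensionality throughout. Minkowski's theorem gives $C=\conv({\rm ext}(C))$ without taking any closure, and Carath\'eodory's bound on the number of terms is what makes your subsequence extraction work. In effect you prove that $\conv({\rm cl}(B))$ is compact and hence equals ${\rm cl}(\conv(B))$; you could state this identity once and use it in both directions. In a general locally convex space the convex hull of a compact set need not be closed and Carath\'eodory is unavailable, so Milman's converse instead needs a covering of ${\rm cl}(B)$ by finitely many closed convex neighborhoods. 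The citation buys brevity and generality. Your route is elementary and self-contained, and it also justifies the fact the paper asserts right after the lemma, that $\conv(B)$ is compact when $B$ is. One small simplification: no induction is needed in the extremality step. The decomposition $e=\lambda_j b_j+(1-\lambda_j)w_j$ can be applied to each index $j$ with $0<\lambda_j<1$ separately. It is valid under the paper's footnote definition, because $b_j\neq w_j$ would place $e$ strictly between two distinct points of $C$, both different from $e$.
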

When a set $B$ is compact, its convex hull is also compact. As a direct consequence of Lemma~\ref{lemma-Krein-Milman}, for a compact set $B$, we have that 
$C=\conv(B)$ if and only if the extreme
    points of $C$ are contained in the set $B$.
A linear function constrained to a compact convex set $C=\conv(B)$ always attains its maximal (or minimal) value at an extreme point of the set $C$, which by the preceding result implies that for any $\mathbf{c}\in\mathbb{R}^m$,
\begin{equation}\label{eq-milman}
\max_{v\in \conv(B)}\langle \mathbf{c},v\rangle =\max_{v\in B}\langle \mathbf{c},v\rangle.
\end{equation}
}

For the system in~\eqref{eq: ct Korpelevich}, we have $z=[y^\top,x^\top]^\top$ and $\dot z=H(z)$,
with $H(z)=[H_1(z)^\top,H_2(z)^\top]^\top$. 
Due to the Cartesian structure of the dynamic, we have
\[\partial V(z)=\{(\xi_y,\xi_x)\mid \xi_y\in \partial_y V(z), \    \xi_x\in \partial_x V(z)\},\]
\[\mathcal{L}_H V(z)=\{\langle\xi_x,\dot x\rangle+\langle\xi_y,\dot y\rangle \mid \xi_x\in \partial_x V(z), \ \xi_y\in \partial_y V(z)\}.\]

The solution set ${\rm Sol}(K,F)$ is exponentially stable for the Korpelevich system in~\eqref{eq: ct Korpelevich}, as seen in the following result.
\begin{thm} \label{thm-Korpelevich-analysis}
    Let $K\subseteq \mathbb{R}^m$ be a closed convex set and 
    $F:K\to\mathbb{R}^m$ be a Lipschitz continuous mapping with a constant $L>0$. Let Assumption~\ref{Assum-nu-quasi-monotone} hold. Also, let $0<\alpha <\min\{\frac{\eta}{L^2},\frac{1}{2L}\}$. Then, the set $\{(x^*,x^*)\mid x^*\in{\rm Sol}(K,F)\}$ is exponentially stable for the Korpelevich system in~\eqref{eq: ct Korpelevich}, initialized at any  $x(0),y(0)\in K$.
\end{thm}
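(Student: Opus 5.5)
We will use a non-smooth Lyapunov function built from squared distances to the solution set, namely, with $U={\rm Sol}(K,F)$,
\[
V(x,y)={\rm dist}^2(x,U)+{\rm dist}^2(y,U)
\]
(possibly with an added term $c\|x-y\|^2$ if needed). By Lemma~\ref{lem:traj-in K}, trajectories stay in $K\times K$, and by Lemma~\ref{lem-auxiliary-equilibria} (note $\alpha<\frac{1}{2L}<\frac1L$) the equilibrium set is $M=\{(x^*,x^*)\mid x^*\in U\}$. The first step is to relate $V$ to the distance from $(x,y)$ to $M$: clearly ${\rm dist}^2((x,y),M)\le 2\|x-v\|^2+2\|y-v\|^2$ for $v\in S(x)$, with $\|y-v\|\le \|y-x\|+{\rm dist}(x,U)$. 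This shows that $V$ alone does not control $\|x-y\|$, which is why we include the coupling term $c\|x-y\|^2$, or bound it through the dynamics. In either case we obtain constants $c_1,c_2>0$ with $c_1{\rm dist}^2((x,y),M)\le V\le c_2{\rm dist}^2((x,y),M)$.

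The second step is to bound the derivative. By Lemma~\ref{lem: Second-cond} it suffices to bound $\max\mathcal{L}_HV$. By Lemma~\ref{lem-sudiff-dist2} and \eqref{eq-milman}, this maximum is attained at $\xi_x=2(x-v_x)$ and $\xi_y=2(y-v_y)$ for some nearest points $v_x\in S(x)$ and $v_y\in S(y)$. The task is therefore to bound, for every choice of nearest points,
\[
2\langle x-v_x,\Pi_K[x-\alpha F(y)]-x\rangle+2\langle y-v_y,\Pi_K[x-\alpha F(x)]-y\rangle .
\]
For the $x$-term we follow the standard extragradient argument. Set $\hat x=\Pi_K[x-\alpha F(y)]$ and $\bar y=\Pi_K[x-\alpha F(x)]$, and use $v_x=\Pi_K[v_x-\alpha F(v_x)]$ (Lemma~\ref{lem-auxiliary-natural map solution}). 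We write $2\langle x-v,\hat x-x\rangle=\|\hat x-v\|^2-\|x-v\|^2-\|\hat x-x\|^2$ and estimate $\|\hat x-v\|^2$ via the projection inequality. Assumption~\ref{Assum-nu-quasi-monotone} supplies $-2\alpha\eta\,{\rm dist}^2(x,U)$. The mismatch between $F(y)$ and $F(x)$ is handled by Lipschitz continuity, producing error terms $\alpha L\|x-y\|\,{\rm dist}(x,U)$ and $\alpha^2L^2\|x-y\|^2$. For the $y$-term, since $\bar y$ is a nonexpansive image, $\|\bar y-v_y\|\le \|\bar y - v_x\|+\dots$. Young's inequality then gives $2\langle y-v_y,\bar y-y\rangle\le -{\rm dist}^2(y,U)+\|\bar y-v_y\|^2$, and $\|\bar y-v_y\|\le\|\bar y-v_x\|\le (1+\alpha L)\,{\rm dist}(x,U)$.

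The third step combines these estimates. We apply Young's inequality and use the step-size conditions $\alpha<\eta/L^2$ and $\alpha<1/(2L)$ to absorb the cross terms. The goal is an inequality $\max\mathcal{L}_HV\le -\kappa V$ for some $\kappa>0$, after which Grönwall's inequality and the sandwich bounds from the first step give
\[
{\rm dist}((x(t),y(t)),M)\le\sqrt{c_2/c_1}\,{\rm dist}((x(0),y(0)),M)\,e^{-\kappa t/2}.
\]

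The main obstacle is the third step. The $y$-component contributes a positive term of order $(1+\alpha L)^2{\rm dist}^2(x,U)$, whereas the $x$-component only yields a decrease of order $\alpha\eta\,{\rm dist}^2(x,U)$, which is small. Plain summation therefore fails, and the Lyapunov function must be weighted, for example $V=a\,{\rm dist}^2(x,U)+{\rm dist}^2(y,U)+c\|x-y\|^2$ with $a$ large. The first attempt will be to choose $a$ of order $1/(\alpha\eta)$ and to use the fast contraction of $\dot y=\bar y-y$ toward $\bar y$ to control $\|x-y\|$. It then remains to verify that the constraints on $\alpha$ make the resulting $3\times3$ quadratic form in $({\rm dist}(x,U),{\rm dist}(y,U),\|x-y\|)$ negative definite.
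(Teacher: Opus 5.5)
Your proposal has a genuine gap. The decisive estimate $\max\mathcal{L}_HV\le-\kappa V$ is never established, since you leave the $3\times3$ definiteness check open, and the $y$-estimate you plan to feed into it is false. The step $\|\bar y-v_y\|\le\|\bar y-v_x\|$ is unjustified, because $v_y$ is a nearest point of $U$ to $y$, not to $\bar y$. The bound you actually need, $2\langle y-v_y,\bar y-y\rangle\le\dist^2(\bar y,U)-\dist^2(y,U)$, is the subgradient inequality for $\dist^2(\cdot,U)$. It requires $U$ to be convex, and ${\rm Sol}(K,F)$ need not be convex here.

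Concretely, take $F(x)=(0,\,x_2-\sin x_1)$ on $K=\mathbb{R}^2$. It is Lipschitz, satisfies Assumption~\ref{Assum-nu-quasi-monotone} with $\eta=1/\sqrt2$ at every nearest point, and $U$ is the graph of $\sin$. For $x=(\pi,0)\in U$ and $y=(0,-\epsilon)$ you get $\bar y=x$ and $\dist(x,U)=0$. Your chain then predicts $\frac{d}{dt}\dist^2(y,U)\le-\dist^2(y,U)<0$ at $t=0$. In fact $\dot y=(\pi,\epsilon)$ moves $y$ away from the branch of the graph through the origin, and $\frac{d}{dt}\dist^2(y,U)\approx\pi\epsilon>0$.

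In general you are left with only the Cauchy--Schwarz bound $2\langle y-v_y,\bar y-y\rangle\le2\dist(y,U)\|\bar y-y\|$, which contains no decrease. So the ``main obstacle'' you identify is an artifact of putting $\dist^2(y,U)$ into $V$. That term can at best be carried as a perturbation dominated by the other terms, so the real work is an estimate in $(\dist(x,U),\|x-y\|)$ alone.

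Note also that Assumption~\ref{Assum-nu-quasi-monotone} gives the monotonicity estimate only at one nearest point $v^*_x$, not at every $v\in S(x)$. Bounding the Lie derivative ``for every choice of nearest points'' is therefore not justified. Instead, bound $D^*\dist^2(x(t),U)$ directly via $\dist^2(x(t+h),U)\le\|x(t+h)-v^*_{x(t)}\|^2$, which holds with equality at $h=0$.

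The missing idea is the paper's Lyapunov function $V=\frac12(\dist^2(x,U)+\|x-y\|^2)$. It already controls $\dist^2(y,U)\le2\dist^2(x,U)+2\|x-y\|^2$, and its coupling term is self-contained. Since $\dot x-\dot y=-(x-y)+\Pi_K[x-\alpha F(y)]-\Pi_K[x-\alpha F(x)]$, one gets $\langle x-y,\dot x-\dot y\rangle\le-(1-\alpha L)\|x-y\|^2$ with no $\dist(x,U)$ term.

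Your handling of the $x$-term is actually more careful than the paper's write-up. The paper applies Lemma~\ref{Lem-aux-1}(a) to $\Pi_K[x-\alpha F(y)]$, although that lemma concerns $\Pi_K[x-\alpha F(x)]$, so the mismatch you flagged must be kept. Write $d=\dist(x,U)$ and $e=\|x-y\|$. Applying the triangle inequality before squaring gives $\|\Pi_K[x-\alpha F(y)]-v^*_x\|\le c_\alpha d+\alpha L e$. Hence $D^*V\le -Ad^2+2Bde-Ce^2$ with $A=\alpha\eta-\frac12\alpha^2L^2$, $B=\frac12c_\alpha\alpha L$, $C=1-\alpha L-\frac12\alpha^2L^2$. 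Using $\alpha^2L^2<\alpha\eta$ and $\alpha L<\frac12$, a direct computation gives
\[
AC-B^2=\alpha\eta(1-\alpha L)-\tfrac14\alpha^2L^2(3-2\alpha L)>\tfrac14\alpha^2L^2(1-2\alpha L)>0 .
\]
So a $2\times2$ check, under exactly the stated step-size condition, yields $D^*V\le-\kappa V$. Let $M=\{(x^*,x^*)\mid x^*\in U\}$. The comparison lemma, together with $\dist^2((x,y),M)\le6V$ and $2V\le3\dist^2((x,y),M)$, then gives exponential stability.
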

\begin{proof}
The solution set {\rm Sol}$(K,F)$ is nonempty by Assumption~\ref{Assum-nu-quasi-monotone},
and is closed since the set $K$ is closed and the mapping $F$ is continuous on $K$.
Since $\alpha L<1$, by Lemma~\ref{lem-auxiliary-equilibria}
 the
equilibrium set of~\eqref{eq: ct Korpelevich} coincides with the set {\rm Sol}$(K,F)$. 

Define a candidate Lyapunov function $V:\mathbb R^m\times\mathbb R^m\to [0,+\infty)$ by: for all $x,y\in \mathbb{R}^m$,
\begin{align*}
    V(x, y) = \frac{1}{2}\left({\rm dist}^2(x,{\rm Sol}(K,F)) + \|x - y\|^2\right).
\end{align*}
Let $S(x)$ be the set of projection points of $x$ on  
${\rm Sol}(K,F)$, 
\[S(x)=\{v\in {\rm Sol}(K,F)\mid \|x-v\|={\rm dist}\left(x,{\rm Sol}(K,F)\right)\}.\]
We have 
\[\nabla_y V(x,y)=y-x,\]
while by Lemma~\ref{lem-sudiff-dist2} (since $V$ is locally Lipschitz) we have
\begin{align*}
 \partial_x V(x,y) &= x-y +\!\left\{\begin{array}{ll}
    \!\! \! x-{\rm conv}(S(x)) &  \hbox{if $x\notin {\rm Sol}(K,F)$},\cr
\!\! \! \an{\{0\}}& \hbox{otherwise}.
\end{array}\right.   
\end{align*}
For the Lie derivative of $V$ along the trajectories of \eqref{eq: ct Korpelevich}, using the decomposable structure of $\partial V$, we have
\[\mathcal{L}_H V(x, y)=\{\langle u,\dot x\rangle +\langle y-x,\dot y\rangle \mid u\in\partial_xV(x,y)\}.
\]
Using the expression for
$\partial_x V(x,y)$ and taking the maximum, we obtain 
\begin{align*}
  &  \max\mathcal{L}_H V(x, y)=\langle x-y,\dot x-\dot y\rangle\ + \left\{\!\!\!
\begin{array}{ll}
    \max_{v\in {\rm conv}(S(x))}\langle x-v,\dot x\rangle & \!\!\! \hbox{if $x\notin {\rm Sol}(K,F)$},\cr
    \an{\{0\}}
    & \!\!\!\hbox{otherwise}.
\end{array}\right.   
\end{align*}
\an{
Since the set $S(x)$ is compact, by Lemma~\ref{lemma-Krein-Milman}, we have that  relation~\eqref{eq-milman} holds with $B=S(x)$, i.e.,
\[\max_{v\in \conv(S(x))}\langle x-v,\dot x\rangle =\max_{v\in S(x)}\langle x-v,\dot x\rangle,\]
}
so we obtain 
\begin{align*}
  &  \max\mathcal{L}_H V(x, y)=\langle x-y,\dot x-\dot y\rangle \ + \left\{\!\!\!
\begin{array}{ll}
    \max\{\langle x-v,\dot x\rangle|\, v\in S(x)\} & \!\!\! \hbox{if $x\notin {\rm Sol}(K,F)$},\cr
    \an{\{0\}}
    & \!\!\!\hbox{otherwise}.
\end{array}\right.   
\end{align*}
Now, we consider two cases separately, namely, the case when $x\notin {\rm Sol}(K,F)$, and the case when $x\in {\rm Sol}(K,F)$ but $y\ne x$.

\noindent
{\it Case $x\notin {\rm Sol}(K,F)$.}
For the Lie derivative, we have
\begin{align}\label{eq-liederiv-H}
  &  \max\mathcal{L}_H V(x, y)=\langle x-y,\dot x-\dot y\rangle+
    \max_{v\in S(x)} \langle x-v,\dot x\rangle.
    \end{align}
Using the system dynamic~\eqref{eq: ct Korpelevich}, we obtain
\begin{align*}
    \langle x-y,\dot x-\dot y\rangle 
    &= -\|x-y\|^2 \ +\langle x-y, \Pi_K[x-\alpha F(y)]-\Pi_K[x-\alpha F(x)]\rangle\cr&\le 
    -\|x-y\|^2 +\|x-y\|\,\|\Pi_K[x-\alpha F(y)]-\Pi_K[x-\alpha F(x)]\|.
\end{align*}
By using the non-expansiveness property of the projection mapping $\Pi_K$ and the Lipschitz continuity of $F$, we obtain
\begin{align}\label{eq-liederiv-p1-H}
    \langle x-y,\dot x-\dot y\rangle \le -(1-\alpha L)\|x-y\|^2.
\end{align}

Now, we consider the set on the right hand side in~\eqref{eq-liederiv-H}. For an arbitrary $v\in S(x)$, we have
\begin{align*}
    \langle x-v,\dot x\rangle &=
    \langle x-v, \Pi_K[x-\alpha F(y)]-x\rangle
\end{align*}
Since $S(x)\subset {\rm Sol}(K,F)$, the point $v$ is a solution to VI($K,F)$. Thus, by Lemma~\ref{lem-auxiliary-natural map solution},   it satisfies $v-\Pi_K[v-\alpha F(v)]=0$, and by using this equality in the preceding relation, we obtain
\begin{align*}
    \langle x-v,\dot x\rangle &=\langle x-v, v-\Pi_K[v-\alpha F(v)] \rangle +\langle x-v,\Pi_K[x-\alpha F(y)]-x\rangle\cr
    & = \langle x-v, \Pi_K[x-\alpha F(y)] -\Pi_K[v-\alpha F(v)]\rangle -\|x-v\|^2\\
    &\le -\|x-v\|^2 +\|x-v\|\, \|\Pi_K[x-\alpha F(y)] -\Pi_K[v-\alpha F(v)]\|\cr 
    &\le -\frac{1}{2}\|x-v\|^2 + \frac{1}{2}\|\Pi_K[x-\alpha F(y)] -\Pi_K[v-\alpha F(v)]\|^2,
\end{align*}    
where the last inequality follows from $ab\le (a^2+b^2)/2$ valid for any scalars $a,b$.
To bound $\|\Pi_K[x-\alpha F(y)] -\Pi_K[v-\alpha F(v)]\|^2$, we use  Lemma~\ref{Lem-aux-1}(a) with $\tau=\alpha$, and obtain
\begin{align*}
    \langle x-v,\dot x\rangle 
    & \le \frac{c_\alpha^2 -1}{2}\|x-v\|^2 =-\left(\alpha\eta-\frac{\alpha^2 L^2}{2}\right) 
    \|x-v\|^2.
\end{align*}   
As $\|x-v\|=\dist(x,{\rm Sol}(K,F))$, we obtain for any $v\in S(x)$,
\begin{equation}\label{eq-liederiv-p2-H}
\langle x-v,\dot x\rangle 
    \le -\left(\alpha\eta-\frac{\alpha^2 L^2}{2}\right) \dist^2(x,{\rm Sol}(K,F)).
    \end{equation}
Using relations~\eqref{eq-liederiv-p1-H} and~\eqref{eq-liederiv-p2-H}
in equation~\eqref{eq-liederiv-H}, we find that
\begin{align*}
&\max\mathcal{L}_H V(x, y) \le 
-  (1-\alpha L)\|x-y\|^2 -\left(\alpha\eta-\frac{\alpha^2 L^2}{2}\right)\dist^2(x,{\rm Sol}(K,F)).
  \end{align*}
We have
$d=\min\left\{\alpha\eta-\alpha^2L^2,\frac{1}{2}-\alpha L\right\}>0$ 
for $0<\alpha <\min\{\frac{\eta}{L^2},\frac{1}{2L}\}$. 
Hence, in terms of the Lyapunov function $V$, 
\begin{align}\label{eq-liederiv3-H} 
    \max \mathcal{L}_H V(x, y)\leq -2d V(x,y).
\end{align}

\noindent
{\it Case $x\in {\rm Sol}(K,F)$ and $y\ne x$.} The Lie derivative is
\[\mathcal{L}_HV(x,y) =\langle x-y,\dot x-\dot y\rangle. \]
Using relation~\eqref{eq-liederiv-p1-H}, and noting that $V(x,y)=\|x-y\|^2/2$ when $x\in{\rm Sol}(K,F)$,
we find that relation \eqref{eq-liederiv3-H} holds. 

In both of these two cases, since the function $V$ is locally Lipschitz and the solution of \eqref{eq: ct Korpelevich} is unique for any initial condition, using Lemma~\ref{lem: Second-cond}, we obtain that  $D^*V(x(t),y(t))\leq \max \mathcal{L}_HV(x(t), y(t))$. Hence, for almost all $t\geq 0$,
\[
   \dot V(x(t),y(t))= D^*V(x(t),y(t))\leq -2d V(x(t),y(t)).
\]
By the Gr\"onwall-Bellman inequality~\cite[Theorem~1.3]{wang2015generalization}, 
\[   V(x(t),y(t))\leq V(x(0),y(0))e^{-2dt}\quad \hbox{for all $t\ge0$}.\]
Moreover, for $x^*$ in the projection set of $x$ on ${\rm Sol}(K,F)$, we have 
\begin{align*}
     {\rm dist}^2((x,y),\{(z,z)\mid z\in {\rm Sol}(K,F) \})& = \|x-x^*\|^2+\|y-x^*\|^2\nonumber\\
     &\leq \|x-x^*\|^2+(\|y-x\|^2+\|x-x^*\|^2)\nonumber\\
     &\leq 3(\|x-x^*\|^2+\|y-x\|^2)\nonumber\\
     &\leq 6V(x,y).
\end{align*}
Additionally, for any $x^*\in {\rm Sol}(K,F)$,
\begin{align*}
     2V(x, y)& ={\rm dist}^2(x,{\rm Sol}(K,F))+\|x-y\|^2\cr
     &={\rm dist}^2(x,{\rm Sol}(K,F))+\|x-x^*+x^*-y\|^2 \nonumber\\
     &\leq {\rm dist}^2(x,{\rm Sol}(K,F))+2\|x-x^*\|^2+2\|x^*-y\|^2.
\end{align*}
Taking infimum over $x^*\in {\rm Sol}(K,F)$ yields
\begin{align}\label{eq:uperbound v by dist}
2V(x, y)\leq &\, {\rm dist}^2(x,{\rm Sol}(K,F))+2{\rm dist}^2((x,y),\{(x^*,x^*)\mid x^*\in {\rm Sol}(K,F) \}), \nonumber\\
     \leq & 3\, {\rm dist}^2((x,y),\{(x^*,x^*)\mid x^*\in {\rm Sol}(K,F) \}).
\end{align}
Thus, the solution set is globally exponentially stable.
\end{proof}

\subsection{Fixed-time Stability}
We next discuss an accelerated variant of the continuous-time  Korpelevich method in~\eqref{eq: ct Korpelevich}, given by
\begin{subequations}\label{eq: ct Korpelevich-v2}
    \begin{align}
        \dot y & = \rho_{\lambda,\zeta}(x,y)(\Pi_K[x-\alpha  F(x)]-y),\\
        \dot x & = \rho_{\lambda,\zeta}(x,y)(\Pi_K[x-\alpha  F(y)]-x),
    \end{align}
\end{subequations}
where $\alpha>0$,  $x(0),y(0)\in K$, and $\rho_{\lambda,\zeta}:K\times K\to[0,+\infty)$ is a function parameterized by scalars $\lambda,\zeta>0$ and such that 
$\rho_{\lambda,\zeta}(x,y)=0$ only for $x\in{\rm Sol}(K,F)$ and $y=x$. First, we show that the set $K\times K$ is positively invariant for the system in~\eqref{eq: ct Korpelevich-v2}.
\begin{lem}\label{lem:taj-in K-sys-2}
    Let $K\subseteq \mathbb{R}^m$ be a closed convex set {\color{black}and $\rho_{\lambda,\zeta}$ be such that the dynamics in \eqref{eq: ct Korpelevich-v2} is continuous over $K\times K$}. Then, the set $K\times K$ is positively invariant with respect to the system~\eqref{eq: ct Korpelevich-v2}. Moreover, if $F:K\to\mathbb{R}^m$ is Lipschitz continuous over the set $K$
    with a constant $L>0$, then for $\alpha>0$ with $\alpha L<1$, the equilibrium points of the system in~\eqref{eq: ct Korpelevich-v2} coincide with the set $\{(x^*,x^*)\mid x^*\in {\rm Sol}(K,F)\}$.    
\end{lem}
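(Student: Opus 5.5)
For positive invariance, I plan to follow the convexity argument of Lemma~\ref{lem:traj-in K}. The new ingredient is the time-varying scalar factor $r(t):=\rho_{\lambda,\zeta}(x(t),y(t))\ge 0$. With $z=[y^\top,x^\top]^\top$ and $H$ as in the proof of Lemma~\ref{lem:traj-in K}, the system~\eqref{eq: ct Korpelevich-v2} reads $\dot z=r(t)(H(z)-z)$. I introduce the integrating factor $R(t)=\int_0^t r(s)\,ds$, which is nondecreasing and continuous because the dynamics are continuous along the trajectory. Multiplying by $e^{R(t)}$ gives $\frac{d}{dt}\big(z(t)e^{R(t)}\big)=r(t)e^{R(t)}H(z(t))$. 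Integrating, I obtain
\[z(t)=e^{-R(t)}z(0)+\big(1-e^{-R(t)}\big)\int_0^t H(z(s))\,\frac{r(s)e^{R(s)}}{e^{R(t)}-1}\,ds,\]
which is valid whenever $R(t)>0$. If $R(t)=0$, then $r\equiv0$ on $[0,t]$, so $z(t)=z(0)\in K\times K$ trivially.

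The weight $r(s)e^{R(s)}/(e^{R(t)}-1)$ is nonnegative and integrates to $1$ over $[0,t]$, since its antiderivative is $e^{R(s)}$. The integral is therefore an average of points $H(z(s))$. I would argue by a continuation argument that $z(s)\in K\times K$ on $[0,t]$, so that $H$ is defined there, or alternatively extend $F$ to $\mathbb{R}^m$ through its values on $\Pi_K$. Since $H$ maps into $K\times K$, its average lies in the closed convex set $K\times K$, and $z(t)$ is a convex combination of two points of $K\times K$. This continuation/domain-of-definition issue is the delicate point; the paper handles it implicitly in Lemma~\ref{lem:traj-in K}, and I would treat it the same way.

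For the equilibria, a point $(\bar x,\bar y)\in K\times K$ is an equilibrium iff either $\rho_{\lambda,\zeta}(\bar x,\bar y)=0$, or $\Pi_K[\bar x-\alpha F(\bar x)]=\bar y$ and $\Pi_K[\bar x-\alpha F(\bar y)]=\bar x$. In the first case the standing hypothesis on $\rho_{\lambda,\zeta}$ gives $\bar x\in{\rm Sol}(K,F)$ and $\bar y=\bar x$. In the second case the system coincides with~\eqref{eq-liederiv-H}'s underlying system~\eqref{eq: ct Korpelevich}, and Lemma~\ref{lem-auxiliary-equilibria} (which uses $\alpha L<1$) gives $\bar x=\bar y\in{\rm Sol}(K,F)$.

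Conversely, suppose $x^*\in{\rm Sol}(K,F)$. Lemma~\ref{lem-auxiliary-natural map solution} gives $x^*=\Pi_K[x^*-\alpha F(x^*)]$, so both brackets vanish at $(x^*,x^*)$, and $(x^*,x^*)$ is an equilibrium regardless of the value of $\rho_{\lambda,\zeta}$. This part is routine; the main obstacle is the rigor of the invariance step described above.
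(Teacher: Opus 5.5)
Your equilibrium argument is correct and is essentially the paper's. Your converse is slightly cleaner, since it does not need $\rho_{\lambda,\zeta}$ to vanish on $E:=\{(x^*,x^*)\mid x^*\in{\rm Sol}(K,F)\}$.

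The invariance part, however, has a genuine gap at the integrating factor. You claim $R(t)=\int_0^t r(s)\,ds$ is finite and continuous because the dynamics are continuous. But the hypothesis constrains only the product $\rho_{\lambda,\zeta}(z)(H(z)-z)$, not $\rho_{\lambda,\zeta}$ itself. For the $\rho_{\lambda,\zeta}$ of \eqref{eq: def_rho_lambda} one has $\rho_{\lambda,\zeta}\ge S_\lambda^{\zeta-1/2}\to\infty$ as $(x,y)$ approaches $E$.

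Worse, $R$ must actually blow up. Take Lipschitz $F$ and a trajectory that starts outside $E$ and first reaches $E$ at a finite time $T$, which is the whole purpose of \eqref{eq: ct Korpelevich-v2}. On $[0,T)$ we have $r>0$, and the time change $\tau=R(t)$ turns the trajectory into one of \eqref{eq: ct Korpelevich}. That vector field is globally Lipschitz, so by backward uniqueness it cannot reach its equilibrium set from outside in finite $\tau$. Hence $R(t)\uparrow\infty$ as $t\uparrow T$.

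Your representation of $z(t)$ is therefore unavailable exactly from the settling time on. Closedness still gives $z(T)\in K\times K$. For $t>T$, though, you would need uniqueness of solutions, which is not among the lemma's hypotheses. Restarting the formula at a time when a non-unique solution leaves $E$ runs into the same non-integrability.

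The paper avoids this by arguing pointwise in $z$:
\begin{itemize}
\item $H(z)-z$ lies in the tangent cone $T_{K\times K}(z)$.
\item A cone is preserved under multiplication by $\rho_{\lambda,\zeta}(z)\ge0$, however large that factor is.
\item Nagumo's theorem \cite[Theorem~3.1]{BLANCHINI19991747} then concludes. In the cited form this presupposes uniqueness, which the paper only establishes in the following lemma.
\end{itemize}

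Within your own setup there is a pointwise repair that needs neither integrability of $r$ nor uniqueness:
\begin{itemize}
\item Use your extension $F\circ\Pi_K$, together with the induced extension of $\rho_{\lambda,\zeta}$. Then $H$ maps all of $\mathbb{R}^{2m}$ into $C:=K\times K$.
\item The function $\phi(z)=\frac12\dist^2(z,C)$ is $C^1$ with $\nabla\phi(z)=z-\Pi_C[z]$.
\item The projection inequality $\langle z-\Pi_C[z],H(z)-\Pi_C[z]\rangle\le0$ gives, along any solution, $\frac{d}{dt}\phi(z(t))=r(t)\langle z-\Pi_C[z],H(z)-z\rangle\le-2r(t)\phi(z(t))\le0$.
\item Since $\phi(z(0))=0$ and $\phi\ge0$, this yields $\phi(z(t))\equiv0$.
\end{itemize}
This argument uses only that $r(t)\in[0,\infty)$ at each instant.
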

\begin{proof}
By Lemma~\ref{lem:traj-in K}, the set $K\times K$ is positively invariant for the system in~\eqref{eq: ct Korpelevich-alt}. 
By~\cite[Theorem~3.1]{BLANCHINI19991747} (or~\cite[Proposition~4.6.2]{bno2003convex}), the vector $H(z)-z$, with $z=(y,x)$, belongs to the tangent cone of the set $K\times K$ at $z$, where
the tangent cone of a convex set $C$ at a point $z\in C$ is defined by (see~\cite[Theorem~6.9]{rockafellar1998variational}) 
\[ T_C(z)={\rm cl}\{w\mid \exists \tau >0 \ {\rm with} \ z+\tau w\in C\},\] where
${\rm cl}(\cdot)$ denotes the closure of a set.
By the property of cones, for any \an{$\rho_{\lambda,\zeta}(z)\ge0$}, the vector $\rho_{\lambda,\zeta}(z)(H(z)-z)$ remains in the tangent cone of the set $K\times K$ at $z$. By~\cite[Theorem~3.1]{BLANCHINI19991747}, the set $K\times K$ is positively invariant with respect to the system $\dot z=\rho_{\lambda,\zeta}(z)(H(z)-z)$. 

By the definition of $\rho_{\lambda,\zeta}$, we have that the point $(\bar x,\bar y)\in K\times K$, with $\bar x\in {\rm Sol}(K,F)$, $\bar y=\bar x$,
is an equilibrium point of the system in~\eqref{eq: ct Korpelevich-v2}.
When $\rho_{\lambda,\zeta}(x,y)>0$, an equilibrium point satisfies 
    \begin{align*}
       y  =\Pi_K[ x-\alpha F( x)],\quad
       x  =\Pi_K[ x-\alpha F(y)].
    \end{align*}
Using a similar analysis to that of the proof of Lemma~\ref{lem-auxiliary-equilibria}, we will conclude that $x\in{\rm Sol}(K,F)$ and $y=x$, which contradicts the fact that $\rho_{\lambda,\zeta}(x,y)>0$.
\end{proof}

We are interested in selecting the function $\rho_{\lambda,\zeta}$ so that the set $\{(x^*,x^*)\mid x^*\in {\rm Sol}(K,F)\}$ is fixed-time stable for the system in~\eqref{eq: ct Korpelevich-v2}.
Toward that goal, we define
\begin{equation}\label{eq: def_S_lambda}
    S_\lambda(x,y)=\|F_{K,\lambda}^{\rm nat}(x)\|^2+\|x-y\|^2\quad\hbox{for all }x,y\in K,
\end{equation}
and, for some $0<\zeta<\frac{1}{2}$, define $\rho_{\lambda,\zeta}(x,y)$ as follows:
\begin{equation}\label{eq: def_rho_lambda}
    \rho_{\lambda,\zeta}\!(x,y)\!\!=\!\!\left\{\!
    \begin{array}{ll}
         \!\!\!S_\lambda(x,y)^{\frac{1}{2}}\!+\!S_\lambda(x,y)^{\zeta-\frac{1}{2}} \!\!\!&\hbox{if $S_\lambda(x,y)\!>\!0$},\\
         \!\!0\!\!\! & \hbox{if $S_\lambda(x,y)\!=\!0$}.
    \end{array}\right.
\end{equation}
Let us also define for all $x,y\in \mathbb{R}^m$,
\begin{equation}\label{eq: def_V}
    V(x, y) = \frac{1}{2}\big({\rm dist}^2(x,{\rm Sol}(K,F)) + \|x - y\|^2\big).
\end{equation}

\begin{lem}\label{Lem-aux-1}
Let $K\subseteq \mathbb{R}^m$ be a closed convex set and $F: K\to \mathbb{R}^m$ be a Lipschitz continuous mapping with a Lipschitz constant $L>0$. Let Assumption~\ref{Assum-nu-quasi-monotone} hold with some $\eta>0$. Let  $\tau \in (0,\frac{2\eta}{L^2})$ and define 
$c_\tau=\sqrt{1-2\tau \eta +\tau^2 L^2}.$ Then,  $c_\tau\in [0,1)$ and we have:\\
\noindent
(a)
{\color{black} For all $x\in K$,
\[\|\Pi_K[x-\tau F(x)]\!-\!\Pi_K[v^*_x-\tau F(v^*_x)]\|\leq c_\tau\,
\dist(x,{\rm Sol}(K,F)),\]
where $v_x^* \in {\rm Sol}(K,F)$ satisfies Assumption~\ref{Assum-nu-quasi-monotone}.}\\
\noindent
(b) For $\lambda \in (0,\frac{2\eta}{L^2})$ and 
for  all $(x,y)\in \mathbb{R}^m\times \mathbb{R}^m\setminus\{(x^*,x^*)\mid x^*\in {\rm Sol}(K,F)\}$,
   \begin{align*}
    &\sqrt{2}(1-c)V(x,y)^{\frac{1}{2}}+\frac{2^\zeta(1-c)^{2\zeta}}{\sqrt{2}(1+c)} V(x,y)^{\zeta-\frac{1}{2}}\leq \rho_{\lambda,\zeta}(x,y),\nonumber\\
    &\rho_{\lambda,\zeta}(x,y)\leq \sqrt{2}(1+c)V(x,y)^{\frac{1}{2}}+\frac{2^\zeta (1+c)^{2\zeta}}{\sqrt{2}(1-c)}V(x,y)^{\zeta-\frac{1}{2}},
\end{align*}
with $c=c_\lambda$, and $\rho_{\lambda,\zeta}$ and $V$ are defined in~\eqref{eq: def_rho_lambda} and~\eqref{eq: def_V}, respectively.
\end{lem}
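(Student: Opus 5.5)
For part (a), I will use non-expansiveness of $\Pi_K$ to reduce the claim to bounding $\|(x-v^*_x)-\tau(F(x)-F(v^*_x))\|^2$. Expanding the square gives
\[
\|x-v^*_x\|^2-2\tau\langle F(x)-F(v^*_x),x-v^*_x\rangle+\tau^2\|F(x)-F(v^*_x)\|^2 .
\]
Since $\|x-v^*_x\|=\dist(x,{\rm Sol}(K,F))$ (because $v^*_x\in S(x)$), Assumption~\ref{Assum-nu-quasi-monotone} bounds the middle term and Lipschitz continuity bounds the last. The result is the estimate $(1-2\tau\eta+\tau^2L^2)\dist^2(x,{\rm Sol}(K,F))$, and taking square roots gives (a). That $c_\tau\in[0,1)$ follows from two observations. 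First, $\tau<2\eta/L^2$ gives $\tau^2L^2<2\tau\eta$. Second, the radicand is nonnegative: by Cauchy--Schwarz and Lipschitz continuity, the assumption forces $\eta\le L$, so $1-2\tau\eta+\tau^2L^2\ge(1-\tau\eta)^2\ge0$.

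For part (b), the core of the argument is a two-sided comparison between $\|F^{\rm nat}_{K,\lambda}(x)\|$ and $\dist(x,{\rm Sol}(K,F))$. Write $v=v^*_x$. By Lemma~\ref{lem-auxiliary-natural map solution}, $v=\Pi_K[v-\lambda F(v)]$, so
\[
F^{\rm nat}_{K,\lambda}(x)=(x-v)-\bigl(\Pi_K[x-\lambda F(x)]-\Pi_K[v-\lambda F(v)]\bigr).
\]
The triangle inequality together with (a) then gives
\[
(1-c)\,\dist(x,{\rm Sol})\le\|F^{\rm nat}_{K,\lambda}(x)\|\le(1+c)\,\dist(x,{\rm Sol}),
\]
with $c=c_\lambda$. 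Adding $\|x-y\|^2$ and using $0\le c<1$, so that $(1-c)^2\le1\le(1+c)^2$, I obtain
\[
2(1-c)^2V\le S_\lambda\le 2(1+c)^2V .
\]
Off the target set we have $V>0$, and hence also $S_\lambda>0$, so $\rho_{\lambda,\zeta}$ is given by the first branch of its definition.

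It then remains to insert these bounds into $\rho_{\lambda,\zeta}=S_\lambda^{1/2}+S_\lambda^{\zeta-1/2}$. The power $\tfrac12$ is increasing, which gives directly the terms $\sqrt2(1\mp c)V^{1/2}$. The power $\zeta-\tfrac12<0$ is decreasing, so the lower bound on $\rho$ uses the upper bound on $S_\lambda$, and vice versa. This produces
\[
2^{\zeta-1/2}(1+c)^{2\zeta-1}V^{\zeta-1/2}=\frac{2^\zeta(1+c)^{2\zeta}}{\sqrt2(1+c)}V^{\zeta-1/2}.
\]
The stated lower bound has $(1-c)^{2\zeta}$ in place of $(1+c)^{2\zeta}$, and it follows because it is even smaller. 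The upper bound is obtained in the same way, weakening $(1-c)^{2\zeta}$ to $(1+c)^{2\zeta}$.

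I expect the main obstacle to be bookkeeping rather than ideas. One must track the direction of each inequality through the negative exponent $\zeta-\tfrac12$ and check that the resulting constants can be weakened to exactly the stated forms. A second point of care is that $x$ ranges over $K$, as required for part (a) to apply, while $y$ is arbitrary.
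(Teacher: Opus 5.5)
Your proposal is correct and follows the paper's proof essentially step for step. You get $c_\tau\in[0,1)$ from $\eta\le L$ via Cauchy--Schwarz, prove (a) by non-expansiveness and expanding the square, and prove (b) through the triangle-inequality sandwich $(1-c)\,\dist(x,{\rm Sol}(K,F))\le\|F^{\rm nat}_{K,\lambda}(x)\|\le(1+c)\,\dist(x,{\rm Sol}(K,F))$, which gives $2(1-c)^2V\le S_\lambda\le 2(1+c)^2V$. Your explicit tracking of the decreasing power $S_\lambda^{\zeta-1/2}$, which yields constants slightly sharper than the stated ones, spells out what the paper compresses into ``it follows that'' (and, like the paper, your derivation of $\eta\le L$ tacitly needs some $x\in K\setminus{\rm Sol}(K,F)$).
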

\begin{proof}
\an{
By Assumption~\ref{Assum-nu-quasi-monotone}, using the Cauchy-Schwarz inequality, we can see that for all $x\in K$,
\[\|F(x)-F(v^*_x)\|\dist(x,{\rm Sol}(K,F))\ge \eta \,\dist^2(x,{\rm Sol}(K,F)),\] 
where we use $\|x-v^*_x\|=\dist(x,{\rm Sol}(K,F))$ since $v^*_x\in S(x)$.
For $x\in K\setminus{\rm Sol}(K,F)$, by using the Lipschitz continuity of the mapping $F$, we obtain $L\ge \eta$. Hence, 
\[1-2\tau \eta +\tau^2 L^2=(1-\tau \eta)^2 +\tau^2 (L^2-\eta^2)\ge0.\]
We have $1-2\tau \eta +\tau^2 L^2<1$ for $\tau \in (0,\frac{2\eta}{L^2})$,
implying that $c_\tau=\sqrt{1-2\tau \eta +\tau^2 L^2}<1$.
}

\noindent
(a) 
\an{Let $x\in K$ and let $v^*_x \in S(x)$ be the projection of $x$ on ${\rm Sol}(K,F)$ satisfying Assumption~\ref{Assum-nu-quasi-monotone}.
By the non-expansiveness property of the projection, we have for any $\tau>0$,
    \begin{align*}
\|\Pi_K[x-\tau F(x)]-\Pi_K[v^*_x-\tau F(v^*_x)]\|^2&\leq \|(x-\tau F(x))-(v^*_x-\tau F(v^*_x))\|^2\cr
&\le\|x-v^*_x\|^2 -2\tau\langle F(x)-F(v^*_x),x-v^*_x\rangle 
        +\tau^2\|F(x)-F(v^*_x)\|^2.
    \end{align*}
By using Assumption~\ref{Assum-nu-quasi-monotone} and 
the Lipschitz continuity of the mapping $F$, we obtain
    \begin{align*}
\|\Pi_K[x-\tau F(x)]-\Pi_K[v^*_x-\tau F(v^*_x)]\|^2&\leq (1 +\tau^2L^2)\|x-v^*_x\|^2
        -2\tau\eta\,\dist^2(x,{\rm Sol}(K,F))\cr
        &=(1-2\tau\eta+\tau^2L^2)\dist^2(x,{\rm Sol}(K,F)),
    \end{align*}
    where the equality is due to $\|x-v^*_x\|=\dist(x,{\rm Sol}(K,F))$.
Letting $c_\tau=\sqrt{1-2\tau\eta +\tau^2L^2}$, the stated relation follows.}

\noindent
(b)
\an{From the definition of $S_\lambda(x,y)$ in~\eqref{eq: def_S_lambda} and Lemma~\ref{lem-auxiliary-natural map solution}, we have  
$V(x,y)=0$ if and only if $S_\lambda(x,y)=0$.
By the definition of $\rho_{\lambda,\zeta}$ in~\eqref{eq: def_rho_lambda}, we conclude that  
\[V(x,y)=0\quad\iff\quad \rho_{\lambda,\zeta}(x,y)=0.\]
Noting that $V(x,y)=0$ if and only if $x=x^*$, $y=x^*$, $x^*\in{\rm Sol}(K,F)$,
we have 
$V(x,y) >0$ and $\rho_{\lambda,\zeta}(x,y)>0$
for all $(x,y)\in \mathbb{R}^m\times \mathbb{R}^m\setminus\{(x^*,x^*)\mid x^*\in {\rm Sol}(K,F)\}$.
Thus, the quantity $V(x,y)^{\zeta-\frac{1}{2}}$ in the given inequalities is well defined.}

\an{Let $(x,y)\in \mathbb{R}^m\times \mathbb{R}^m\setminus\{(x^*,x^*)\mid x^*\in {\rm Sol}(K,F)\}$ be arbitrary. By Lemma~\ref{lem-auxiliary-natural map solution}, we have
$F_{K,\lambda}^{\rm nat}(x^*)=0$. In particular, by letting $x^*=v^*_x\in {\rm Sol}(K,F)$, where $v^*_x$ is from Assumption~\ref{Assum-nu-quasi-monotone}, we obtain
\begin{align}\label{eq-222}
&\|F_{K,\lambda}^{\rm nat}(x)\|
=\|F_{K,\lambda}^{\rm nat}(x)-F_{K,\lambda}^{\rm nat}(v^*_x)\|=\|x-v^*_x-(\Pi_K[x-\lambda F(x)]-\Pi_K[v_x^*-\lambda F(v_x^*)])\|,\quad
\end{align}
where the last equality is obtained
by using the definition of $F_{K,\lambda}^{\rm nat}$.
Using the triangle inequality and the fact that $\|x-v_x^*\| =\dist(x, {\rm Sol}(K,F))$, we further obtain 
\begin{eqnarray}\label{eq-111}
\dist(x, {\rm Sol}(K,F))-\|\Pi_K[x-\lambda F(x)]
    -\Pi_K[v_x^*-\lambda F(v_x^*)]\|\le \|F_{K,\lambda}^{\rm nat}(x)\|.
\end{eqnarray}
By part (a), where $\tau=\lambda$, we have
\[ -c\, \dist(x,\!{\rm Sol}(K,F))\!\le\! -\|\Pi_K[x-\lambda F(x)]-\Pi_K[v_x^*-\lambda F(v_x^*)]\|
,\]
and by adding $\dist(x, {\rm Sol}(K,F))$ to both sides of the preceding relation, we obtain
\begin{align*}
    (1-c)&\dist(x, {\rm Sol}(K,F))
    \leq \dist(x, {\rm Sol}(K,F))-\!\|\Pi_K[x-\lambda F(x)]\!-\!\Pi_K[v^*_x-\lambda F(v^*_x)]\|.
\end{align*}
Combining the preceding relation with~\eqref{eq-111}, we arrive at
\[(1-c)\dist(x, {\rm Sol}(K,F))\le 
\|F_{K,\lambda}^{\rm nat}(x)\|.\]
To upper bound $\|F_{K,\lambda}^{\rm nat}(x)\|$, we use~\eqref{eq-222} to obtain
\begin{align*}
\|F_{K,\lambda}^{\rm nat}(x)\|
    &\leq \|x-v^*_x\| +\|\Pi_K[x-\lambda F(x)]-\Pi_K[v^*_x-\lambda F(v^*_x)]\|\cr
    &\le (1+c)\dist(x, {\rm Sol}(K,F)), 
\end{align*}
where the last inequality is obtained by using $\|x-v^*_x\| =\dist(x, {\rm Sol}(K,F))$ and part~(a).
The preceding two relations yield for any $x\in K$, 
\begin{align*}
    &(1-c)^2\dist^2(x, {\rm Sol}(K,F))\leq \|F_{K,\lambda}^{\rm nat}(x)\|^2, \cr
    &\|F_{K,\lambda}^{\rm nat}(x)\|^2
    \leq (1+c)^2\dist^2(x, {\rm Sol}(K,F)).
\end{align*}
By adding $(1-c)^2\|x-y\|^2\leq\|x-y\|^2$ and 
$\|x-y\|^2\leq (1+c)^2\|x-y\|^2$ to the preceding relations, in order,  and using the definitions of $S_\lambda(x,y)$ and $V(x,y)$ (\eqref{eq: def_S_lambda} and~\eqref{eq: def_V}), we obtain
\[2(1-c)^2 V(x,y)\leq S_\lambda(x,y)\leq 2(1+c)^2V(x,y).\]
Since $\rho_{\lambda,\zeta}(x,y)=S_\lambda(x,y)^{\frac{1}{2}}+S_\lambda(x,y)^{\zeta-\frac{1}{2}}$,  it follows that 
\begin{align*}
    &\sqrt{2}(1-c)V(x,y)^{\frac{1}{2}}\!+\!\frac{2^\zeta(1-c)^{2\zeta}}{\sqrt{2}(1+c)}\,V(x,y)^{\zeta-\frac{1}{2}}\!\leq \rho_{\lambda,\zeta}(x,y),\nonumber\\
    &\rho_{\lambda,\zeta}(x,y)\!\leq \sqrt{2}(1+c)V(x,y)^{\frac{1}{2}}\!+\!\frac{2^\zeta (1+c)^{2\zeta}}{\sqrt{2}(1-c)}\,V(x,y)^{\zeta-\frac{1}{2}}.
\end{align*}
}
\end{proof}

We next prove
the existence of a unique solution to
the dynamical system~\eqref{eq: ct Korpelevich-v2}.



{\color{black}
\begin{lem}
Let Assumption \ref{Assum-nu-quasi-monotone} hold, $K\subseteq \mathbb{R}^m$ be a closed convex set, $F:K\to\mathbb{R}^m$ be a Lipschitz continuous mapping with a constant $L$, $\rho_{\lambda,\zeta}$ be as defined in \eqref{eq: def_rho_lambda}, with $\zeta\in (0, \frac{1}{2})$ and $\lambda\in (0, \frac{2\eta}{L^2})$. Then, for each initial condition $(x(0), y(0))\in K\times K$, there exists a unique solution to \eqref{eq: ct Korpelevich-v2} for all $t\geq 0$. 
\end{lem}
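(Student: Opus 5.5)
The plan is to split $K\times K$ into the equilibrium set $E=\{(x^*,x^*)\mid x^*\in{\rm Sol}(K,F)\}$ and its complement. Away from $E$ the right-hand side of \eqref{eq: ct Korpelevich-v2} is locally Lipschitz. Near $E$ we get uniqueness from the fact that trajectories only move toward $E$, using the Lyapunov function $V$ in \eqref{eq: def_V}. We write the system as $\dot z=\rho_{\lambda,\zeta}(z)(H(z)-z)$ with $z=(y,x)$ and $H$ as in \eqref{eq: ct Korpelevich-alt}.

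\textbf{Step 1 (continuity and existence).} The map $z\mapsto H(z)-z$ is Lipschitz on $K\times K$, since $\Pi_K$ is nonexpansive and $F$ is Lipschitz. By Lemma~\ref{lem-auxiliary-natural map solution}, $S_\lambda$ is Lipschitz on bounded sets and vanishes exactly on $E$. For $S_\lambda>0$, $\rho_{\lambda,\zeta}$ is a composition of locally Lipschitz maps, so the vector field is locally Lipschitz on $(K\times K)\setminus E$.

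At $E$ we show continuity. By Lemma~\ref{Lem-aux-1}(b), $\rho_{\lambda,\zeta}\le C_1V^{1/2}+C_2V^{\zeta-1/2}$. We also need $\|H(z)-z\|\le C_3V^{1/2}$. For this, use $\|x-y\|^2\le 2V$, together with
\[\|\Pi_K[x-\alpha F(y)]-x\|\le \|\Pi_K[x-\alpha F(y)]-\Pi_K[x-\alpha F(x)]\|+\|F^{\rm nat}_{K,\alpha}(x)\|,\]
and a bound on $\|F^{\rm nat}_{K,\alpha}(x)\|$ by a multiple of $\dist(x,{\rm Sol}(K,F))$ via Lipschitzness at $v^*_x$. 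The same argument handles the $\dot y$ component. Then $\|\dot z\|\le C(V+V^{\zeta})\to0$ as $z\to E$, since $\zeta>0$, so the field is continuous with value $0$ on $E$. Continuity gives local existence by Peano's theorem.

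Positive invariance of $K\times K$ (Lemma~\ref{lem:taj-in K-sys-2}) keeps solutions in the domain. Global existence for $t\ge0$ follows once we show $V$ is nonincreasing along solutions, since this bounds trajectories and rules out finite escape.

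\textbf{Step 2 (monotonicity of $V$).} Repeat the Lie-derivative computation from the proof of Theorem~\ref{thm-Korpelevich-analysis}. The dynamics are scaled by $\rho_{\lambda,\zeta}\ge0$, so Lemma~\ref{lem: Second-cond} gives
\[D^*V(z(t))\le -2d\,\rho_{\lambda,\zeta}(z(t))V(z(t))\le0.\]
This uses the step-size condition of Theorem~\ref{thm-Korpelevich-analysis}; if $\alpha$ is not otherwise restricted, we assume $0<\alpha<\min\{\eta/L^2,1/(2L)\}$. It follows that $V$ is nonincreasing, and that $E$ is forward invariant in a strong sense: if $V(z(t_0))=0$, then $V(z(t))=0$ for all $t\ge t_0$.

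\textbf{Step 3 (uniqueness).} Take two solutions $z_1,z_2$ from the same initial point. If $z(0)\in E$, both stay in $E$ by Step 2, so $\dot z_i=0$ and both are constant; hence they coincide. If $z(0)\notin E$, local Lipschitzness gives agreement up to the first time $T$ at which either reaches $E$. Since the solutions agree on $[0,T)$, continuity makes them reach the same point at $T$. After $T$, the previous case applies, so both remain at that point.

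The main obstacle is this last step: a solution leaving $E$ must be excluded by the monotonicity of $V$, not by any Lipschitz property, because the field is only continuous at $E$. We also have to justify that the Lie-derivative bound of Lemma~\ref{lem: Second-cond} holds for any (possibly non-unique a priori) solution. I would handle this by noting that the proof of Lemma~\ref{lem: Second-cond} uses only the integral representation \eqref{eq:sys-solution} of a given trajectory, not uniqueness.
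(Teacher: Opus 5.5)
Your proposal is correct, under the step-size restriction $0<\alpha<\min\{\eta/L^2,1/(2L)\}$. The lemma's statement omits this restriction, but the paper's proof also needs it, since it reuses the Lyapunov computation of Theorem~\ref{thm-Korpelevich-analysis}; you were right to flag it.

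\textbf{What is shared.} Your skeleton matches the paper's: the split into $E$ and its complement, local Lipschitzness of $\rho_{\lambda,\zeta}$ off $E$, continuity at $E$, and preservation of $D^*V\le 0$ under scaling by $\rho_{\lambda,\zeta}\ge0$. Continuity at $E$ comes in both from Lemma~\ref{Lem-aux-1}(b) together with $\|H(z)-z\|\le \hat\theta V^{1/2}$.

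\textbf{Where you differ.} The paper gets uniqueness by feeding $V$ into a Lyapunov-type uniqueness criterion, \cite[Theorem 3.15.1]{agarwal1993uniqueness}. It then continues solutions globally via \cite[Chapter 2, Theorem 1]{aubin1084differential}, using compact, positively invariant sublevel sets of $V$. You argue directly instead:
\begin{itemize}
\item Lemma~\ref{lem: Second-cond} uses only the integral representation of a given trajectory, not uniqueness, so it makes $V$ nonincreasing along every solution.
\item Hence $E$ is absorbing, and any solution meeting $E$ is constant there.
\item Off $E$, local Lipschitz uniqueness holds up to a first hitting time, which must be common to both solutions.
\end{itemize}
This is more elementary and self-contained, and it shows exactly where the step-size condition enters. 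The paper's version is shorter but relies on external theorems whose hypotheses it only sketches.

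\textbf{A weak point in the paper that your route avoids.} Sublevel sets of $V$ are compact only when ${\rm Sol}(K,F)$ is bounded. Assumption~\ref{Assum-nu-quasi-monotone} does not guarantee this: take $K=\mathbb{R}^2$ and $F(x)=(x_1,0)$. Then Assumption~\ref{Assum-nu-quasi-monotone} holds with $\eta=L=1$, yet ${\rm Sol}(K,F)=\{x_1=0\}$ is unbounded. So make your phrase ``this bounds trajectories'' precise. The estimate $\|\dot z\|\le C(V+V^{\zeta})$ from your Step~1 holds on all of $K\times K$, not just near $E$. Combined with monotonicity of $V$, it gives a uniform speed bound, and it is this bound, not boundedness of sublevel sets, that rules out finite escape.

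\textbf{A minor fix in Step~1.} Your use of Peano's theorem should not rely on Lemma~\ref{lem:taj-in K-sys-2} to keep solutions in $K\times K$. That lemma is proved via Nagumo's theorem under the paper's standing uniqueness assumption, which is what you are trying to prove. Instead, extend the field to $\mathbb{R}^{2m}$ by $z\mapsto \rho_{\lambda,\zeta}(w)(H(w)-w)$ with $w=\Pi_{K\times K}[z]$. Since $H(w)\in K\times K$, the projection inequality gives $\frac{d}{dt}\frac12{\rm dist}^2(z(t),K\times K)=\rho_{\lambda,\zeta}(w)\langle z-w,H(w)-w\rangle\le 0$. So every solution starting in $K\times K$ stays there, with no uniqueness needed.
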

\begin{proof}
The dynamics in \eqref{eq: ct Korpelevich-v2} is locally Lipschitz continuous on the set $K\times K \setminus\{(x^*,x^*)\mid x^*\in\text{Sol}(K, F)\}$, since the function $\rho_{\lambda,\zeta}$ is locally Lipschitz continuous everywhere except on the set $\{(x^*,x^*)\mid x^*\in\text{Sol}(K, F)\}$.
Now, we restrict our attention to the vicinity of the set $\{(x^*,x^*)\mid x^*\in\text{Sol}(K, F)\}$. In this region, we have an upper bound on the norm of $\left((\Pi_K[x-\alpha  F(x)]-y),(\Pi_K[x-\alpha  F(y)]-x) \right)$ in terms of $\hat\theta V^{\frac{1}{2}}(x,y)$ for some $\hat\theta>0$. 
To see this, let $S(x)$ be the set of projection points of $x$ on the set 
${\rm Sol}(K,F)$, i.e., 
\[S(x)=\{v\in {\rm Sol}(K,F)\mid \|x-v\|={\rm dist}\left(x,{\rm Sol}(K,F)\right)\},\] 
and consider $x^*\in S(x)$.
Then, we have 
\begin{align*}
    \|\Pi_K[x-\alpha F(x)]-y\|=&\|\Pi_K[x-\alpha F(x)]-y+x-x\|\\
    \leq &\|\Pi_K[x-\alpha F(x)]-x\|+\|x-y\|\\
    \leq &\|\Pi_K[x-\alpha F(x)]-x-(\Pi_K[x^*-\alpha F(x^*)]-x^*)\|+\|x-y\|\\
    \leq& \|\Pi_K[x-\alpha F(x)]-\Pi_K[x^*-\alpha F(x^*)]\|+\|x-x^*\|+\|x-y\|\\
    \leq &(2+\alpha L){\rm dist}\left(x,{\rm Sol}(K,F)\right) +\|x-y\|,
\end{align*}
where in the last inequality we used the non-expansiveness property of the projection operator.
Additionally, we have 
\begin{align*}
    \|\Pi_K[x-\alpha F(y)]-x\|=&\|\Pi_K[x-\alpha F(x)+(\alpha F(x)-\alpha F(y))]-x\\
    &- \left(\Pi_K[x^*-\alpha F(x^*)]-x^*\right\|\\
    \leq&\|\Pi_K[x-\alpha F(x)+(\alpha F(x)-\alpha F(y))]\\
    &- \Pi_K[x^*-\alpha F(x^*)]\|+\|x-x^*\|\\
    \leq &\|x-\alpha F(x)-(x^*-\alpha F(x^*))\|\\
    &+\|\alpha F(x)-\alpha F(y)\|
    +\|x-x^*\|\\
    \leq &(2+\alpha L){\rm dist}\left(x,{\rm Sol}(K,F)\right)+\alpha L\|x-y\|,
\end{align*}
where in the second inequality, we used the non-expansiveness of the projection operator and the triangle inequality.
As a result, we can write
\begin{align}\label{eq:cont-argument-1}
    \|(\Pi_K[x-\alpha F(x)]-y,\Pi_K[x-\alpha F(y)]-x)\|^2&=\|\Pi_K[x-\alpha F(x)]-y\|^2+\|\Pi_K[x-\alpha F(y)]-x\|^2\nonumber\\
    &\leq ((2+\alpha L)^2+1)({\rm dist}^2\left(x,{\rm Sol}(K,F)\right)+\|x-y\|^2)\nonumber\\
    &+((2+\alpha L)^2+\alpha^2L^2)({\rm dist}^2\left(x,{\rm Sol}(K,F)\right)+\|x-y\|^2)\nonumber\\
    &\leq \hat\theta^2 V(x,y),
\end{align}
with $\hat\theta=\left[2\left(2(2+\alpha L)^2+1+\alpha^2L^2)\right)\right]^{\frac{1}{2}}$. We used Cauchy-Schwarz inequality in the derivation of the first inequality in the chain of inequalities in \eqref{eq:cont-argument-1}.
Hence, 
\begin{align*}
    &\|(\Pi_K[x-\alpha F(x)]-y,\Pi_K[x-\alpha F(y)]-x)\|\leq \hat\theta V^{\frac{1}{2}}(x,y).
\end{align*}
Combining the preceding relation and the upper bound on $\rho_{\lambda,\zeta}(x,y)$ from Lemma~\ref{Lem-aux-1}(b), we see that the norm of the system dynamic in~\eqref{eq: ct Korpelevich-v2} is upper-bounded by a linear combination of $V(x,y)$ and $V^\zeta(x,y)$. 
Moreover, by the upper bound in \eqref{eq:uperbound v by dist}, for all $x,y\in\mathbb{R}^m$, 
\begin{align*}
    V(x, y)\leq \frac{3}{2}\left({\rm dist}^2((x,y),\{(x^*,x^*)\mid x^*\in {\rm Sol}(K,F) \})\right).
\end{align*}
Since $\zeta\in (0, \frac{1}{2})$, as the distance of any point $(x,y)$ to the set $\{(x^*,x^*)\mid x^*\in\text{Sol}(K, F)\}$ goes to zero, the system dynamics also goes to zero, thus proving continuity. 

Despite the continuity of the dynamics in \eqref{eq: ct Korpelevich-v2}, it is not Lipschitz continuous everywhere, due to the multiplicative factor $\rho_{\lambda,\zeta}$. Thus, there is a solution for each initial condition. However, the standard uniqueness results cannot be used. 

Since the dynamics in \eqref{eq: ct Korpelevich-v2} is Lipschitz continuous on $K\times K \setminus\{(x,x)\mid x\in\text{Sol}(K, F)\}$, for each initial condition $(x(0), y(0))\in K\times K \setminus\{(x^*,x^*)\mid x^*\in\text{Sol}(K, F)\}$, there exists a unique solution on $[0, \tau_1)$ for some $\tau_1>0$. Now, the Lyapunov function $V$ used in the proof of Theorem~\ref{thm-Korpelevich-analysis} satisfies the conditions of \cite[Theorem 3.15.1]{agarwal1993uniqueness} for the system \eqref{eq: ct Korpelevich-v2} since the derivative of $V$ remains negative when the dynamics in \eqref{eq: ct Korpelevich} is scaled with a non-negative function $\rho_{\lambda,\zeta}$ to obtain \eqref{eq: ct Korpelevich-v2}. As a result, there exists a unique solution to \eqref{eq: ct Korpelevich-v2} on $[0, \tau_2)$ for some $\tau_2>0$. Finally, for any initial condition $(x(0), y(0))$, the level-set $\Omega = \{(x, y)\;|\; V(x, y)\leq V(x(0), y(0))\}$ is compact and positively invariant for \eqref{eq: ct Korpelevich} since the derivative of the function $V$ is non-positive. When the dynamics is modified to \eqref{eq: ct Korpelevich-v2} with a non-negative scalar multiplier $\rho_{\lambda,\zeta}$, the sign of the derivative of the function $V$ along \eqref{eq: ct Korpelevich-v2} remains non-positive, and the set $\Omega$ is positively invariant for \eqref{eq: ct Korpelevich-v2} as well. Hence, using \cite[Chapter 2, Theorem 1]{aubin1084differential}, we have that $\tau_1 = \tau_2=\infty$, i.e., a unique solution exists for all $t\geq 0$ and for any initial condition. 
\end{proof}
}

Now, we study the FxTS of the set $\{(x^*,x^*)\mid x^*\in{\rm Sol}(K,F)\}$. For this, we represent the system~\eqref{eq: ct Korpelevich-v2} as follows:

\begin{subequations}\label{eq: ct Korpelevich-v3}
    \begin{align}
        &\dot y  = G_1(x,y),  \qquad \dot x  = G_2(x,y),\\
        & G_1(x,y)=\rho_{\lambda,\zeta}(x,y)(\Pi_K[x-\alpha  F(x)]-y),\\
        & G_2(x,y) = \rho_{\lambda,\zeta}(x,y)(\Pi_K[x-\alpha  F(y)]-x),
    \end{align}
\end{subequations}
with $x(0)$,$y(0)\in K$ and $\rho_{\lambda,\zeta}$ as given in~\eqref{eq: def_rho_lambda}. 
\an{Let $z=[y^\top,x^\top]^\top$, $G(z)=[G_1(z)^\top,G_2(z)^\top]^\top$,
so
$\dot z=G(z)$ and
\[\partial V(z)=\{(\xi_y,\xi_x)\mid \xi_y\in \partial_y V(z), \    \xi_x\in \partial_x V(z)\},\]
\[\mathcal{L}_G V(z)=\{\langle\xi_x,\dot x\rangle+\langle\xi_y,\dot y\rangle \mid \xi_x\in \partial_x V(z), \ \xi_y\in \partial_y V(z)\}.\]
}

The following result establishes the fixed-time stability of the set $\{(x^*,x^*)\mid x^*\in {\rm Sol}(K,F)\}$ for the 
system in~\eqref{eq: ct Korpelevich-v3}. 
\begin{thm} \label{thm-Korpelevich-v3-analysis}
    Let $K\subseteq \mathbb{R}^m$ be a closed convex set and 
    $F:K\to\mathbb{R}^m$ be a Lipschitz continuous mapping with a constant $L>0$. Let Assumption~\ref{Assum-nu-quasi-monotone} hold. Also, let $0<\alpha <\min\{\frac{\eta}{L^2},\frac{1}{2L}\}$, $\lambda \in (0,\frac{2\eta}{L^2})$, and $0<\zeta<\frac{1}{2}$. Then, the set $\{(x^*,x^*)\mid x^*\in {\rm Sol}(K,F)\}$ is FxTS for the Korpelevich system in~\eqref{eq: ct Korpelevich-v3}, initialized at any  $x(0),y(0)\in K$, 
    with the following upper bound on the settling time
{\color{black}\[
\sup_{ x(0),y(0)\in K}\!\! T(x(0),y(0))\le \!\!
\frac{1}{\sqrt{2}d(1-c)}+\frac{2^{\frac{1}{2}-\zeta}(1+c)}{d(1-c)^{2\zeta}(1-2\zeta)},\] 
where $d=\min\left\{\alpha(\eta-\alpha L^2),\frac{1}{2}-\alpha L\right\}$ and $c=\sqrt{1-2\lambda\eta+\lambda^2L^2}$.}
\end{thm}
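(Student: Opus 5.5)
We apply Lemma~\ref{Lem-2-Kunal's paper} to the system~\eqref{eq: ct Korpelevich-v3}, using the Lyapunov function $V$ from~\eqref{eq: def_V}. This is the same function used in the proof of Theorem~\ref{thm-Korpelevich-analysis}.

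\emph{Equilibria and condition (1).} By Lemma~\ref{lem:taj-in K-sys-2}, since $\alpha L<1$, the set $K\times K$ is positively invariant. The equilibrium set is $M=\{(x^*,x^*)\mid x^*\in{\rm Sol}(K,F)\}$. Clearly $V=0$ on $M$. The chain of inequalities in the proof of Theorem~\ref{thm-Korpelevich-analysis} gives ${\rm dist}^2((x,y),M)\le 6V(x,y)$. Hence condition~(1) holds with $\beta_1=1/6$ and $\beta_2=2$.

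\emph{Lie derivative.} The subdifferential $\partial V$ is the same as before, and $G=\rho_{\lambda,\zeta}H$ with $\rho_{\lambda,\zeta}\ge0$. Therefore
\[\max\mathcal{L}_GV(x,y)=\rho_{\lambda,\zeta}(x,y)\max\mathcal{L}_HV(x,y).\]
Both cases in the proof of Theorem~\ref{thm-Korpelevich-analysis} ($x\notin{\rm Sol}(K,F)$, and $x\in{\rm Sol}(K,F)$ with $y\ne x$) give $\max\mathcal{L}_HV\le -2dV$. Here we use the bounds~\eqref{eq-liederiv-p1-H},~\eqref{eq-liederiv-p2-H} and relation~\eqref{eq-milman}. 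On $M$ both sides vanish. Lemma~\ref{lem: Second-cond} then yields, for all $t\ge0$ (the solution exists and is unique by the preceding lemma),
\[D^*V(x(t),y(t))\le -2d\,\rho_{\lambda,\zeta}(x(t),y(t))\,V(x(t),y(t)).\]

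\emph{Using the lower bound on $\rho_{\lambda,\zeta}$.} Off $M$ we insert the lower bound of Lemma~\ref{Lem-aux-1}(b), with $c=c_\lambda\in[0,1)$ because $\lambda<2\eta/L^2$. This gives
\[D^*V\le -\Big(2\sqrt2\,d(1-c)\,V^{3/2}+\frac{2^{\zeta+1}d(1-c)^{2\zeta}}{\sqrt2(1+c)}\,V^{\zeta+\frac12}\Big),\]
which also holds trivially on $M$. This is condition~(2) with $\theta=1$, $\beta=3/2>1$, $\alpha=\zeta+\tfrac12<1$ (since $\zeta<\tfrac12$), $q=2\sqrt2\,d(1-c)$ and $p=2^{\zeta+\frac12}d(1-c)^{2\zeta}/(1+c)$.

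\emph{Settling time.} Lemma~\ref{Lem-2-Kunal's paper} bounds the settling time by $\frac{1}{p(1-\alpha)}+\frac{1}{q(\beta-1)}$. The first term is $\frac{2(1+c)}{2^{\zeta+\frac12}d(1-c)^{2\zeta}(1-2\zeta)}=\frac{2^{\frac12-\zeta}(1+c)}{d(1-c)^{2\zeta}(1-2\zeta)}$. The second term is $\frac{2}{2\sqrt2\,d(1-c)}=\frac{1}{\sqrt2\,d(1-c)}$. Together these match the stated bound.

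\emph{Main obstacle.} The delicate step is justifying the use of Lemma~\ref{lem: Second-cond} for the non-Lipschitz scaled field, and handling points on $M$, where $\rho_{\lambda,\zeta}$ vanishes and $V^{\zeta-1/2}$ blows up. I would treat this by noting two facts. First, $G$ is continuous with unique solutions, by the preceding lemma. Second, the product $\rho_{\lambda,\zeta}V$ is bounded by $O(V^{1/2+\zeta})\to0$, so the differential inequality holds everywhere.
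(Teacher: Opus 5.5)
Your proposal is correct and is essentially the paper's own proof. It uses the same Lyapunov function~\eqref{eq: def_V} and the bound $\max\mathcal{L}_GV\le-2d\,\rho_{\lambda,\zeta}V$, which you obtain by factoring $\rho_{\lambda,\zeta}\ge0$ out of the computation in Theorem~\ref{thm-Korpelevich-analysis}, whereas the paper redoes that computation with $\rho_{\lambda,\zeta}$ inside. It then combines the lower bound of Lemma~\ref{Lem-aux-1}(b), Lemma~\ref{lem: Second-cond}, and Lemma~\ref{Lem-2-Kunal's paper} with $\theta=1$, $\beta=\frac32$ and exponent $\zeta+\frac12$, which reproduces the stated settling-time bound exactly. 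Your version of condition~(1), via the distance to the diagonal set $\{(x^*,x^*)\mid x^*\in{\rm Sol}(K,F)\}$, is the form the lemma actually requires, and it is stronger than the inequality the paper writes in this proof.

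One caveat you inherit from the paper: \eqref{eq-liederiv-p2-H} does not follow from Lemma~\ref{Lem-aux-1}(a), because $\dot x$ involves $F(y)$ rather than $F(x)$, and that inequality cannot hold uniformly in $y$. Your argument still stands, because the corrected estimate $\langle x-v,\dot x\rangle\le-(\alpha\eta-\frac{\alpha^2L^2}{2})\,{\rm dist}^2(x,{\rm Sol}(K,F))+\alpha L\,{\rm dist}(x,{\rm Sol}(K,F))\,\|x-y\|$, combined with~\eqref{eq-liederiv-p1-H} and Young's inequality, still gives $\max\mathcal{L}_HV\le-2dV$ with the same $d$.
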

\begin{proof}
The solution set {\rm Sol}$(K,F)$ is nonempty by Assumption~\ref{Assum-nu-quasi-monotone},
and it is closed since the set $K$ is closed and the mapping $F$ is continuous on $K$.
Since $\alpha L<1$, by Lemma~\ref{lem:taj-in K-sys-2} the
equilibrium set of~\eqref{eq: ct Korpelevich-v3} coincides with the set {\rm Sol}$(K,F)$. 

Define a candidate Lyapunov function $V:\mathbb R^m\times\mathbb R^m\to [0,+\infty)$ by: for all $x,y\in \mathbb{R}^m$,
\begin{align*}
    V(x, y) = \frac{1}{2}\left({\rm dist}^2(x,{\rm Sol}(K,F)) + \|x - y\|^2\right).
\end{align*}
We have $V(x,y)=0$ for any $x\in {\rm Sol}(K,F)$ and $x=y$. Using the triangle inequality, we can obtain for all $x,y$,
\begin{align*}
    {\rm dist}^2(x,{\rm Sol}(K,F))+{\rm dist}^2(y,{\rm Sol}(K,F))\leq 6 V(x, y),
\end{align*}
implying that condition~(1) of Lemma~\ref{Lem-2-Kunal's paper} holds. Next, we show that condition (2) of Lemma~\ref{Lem-2-Kunal's paper} is satisfied.
Let $S(x)$ be the set of projection points of $x$ on the set 
${\rm Sol}(K,F)$, i.e., 
\[S(x)=\{v\in {\rm Sol}(K,F)\mid \|x-v\|={\rm dist}\left(x,{\rm Sol}(K,F)\right)\}.\]
We have 
$\nabla_y V(x,y)=y-x,$
while by Lemma~\ref{lem-sudiff-dist2} (since $V$ is locally Lipschitz) we have
\begin{align*}
 \partial_x\! V(x,y) &= x-y+\!\left\{\begin{array}{ll}
    \!\! \! x-{\rm conv}(S(x)) &  \hbox{if $x\notin {\rm Sol}(K,F)$},\cr
\!\! \! \an{\{0\}}& \hbox{otherwise}.
\end{array}\right.   
\end{align*}
For the Lie derivative of $V$ along the trajectories of \eqref{eq: ct Korpelevich-v2}, using the structure of $\partial V$, we have
\[\mathcal{L}_G V(x, y)=\{\langle u,\dot x\rangle +\langle y-x,\dot y\rangle \mid u\in\partial_xV(x,y)\},
\]
implying that
\begin{align*}
   \max\mathcal{L}_G V(x, y)=\langle x-y,\dot x-\dot y\rangle + \max\left\{\!\!\!
\begin{array}{ll}
    \{\langle x-v,\dot x\rangle|\, v\in {\rm conv}(S(x))\} & \!\!\! \hbox{if $x\notin {\rm Sol}(K,F)$},\cr
    \an{\{0\}}
    & \!\!\!\hbox{otherwise}.
\end{array}\right.   
\end{align*}
Considering the linearity with respect to $v\in \conv(S(x))$, by \eqref{eq-milman}, taking the maximum over $v\in \conv(S(x))$ is the same as taking the maximum over $v\in S(x)$. 
Hence, we obtain 
\begin{align*}
   \max\mathcal{L}_G V(x, y)=\langle x-y,\dot x-\dot y\rangle& + \max\left\{\!\!\!
\begin{array}{ll}
    \{\langle x-v,\dot x\rangle|\, v\in S(x)\} & \!\!\! \hbox{if $x\notin {\rm Sol}(K,F)$},\cr
    \an{\{0\}}
    & \!\!\!\hbox{otherwise}.
\end{array}\right.   
\end{align*}

Now, we consider two cases separately, the case when $x\notin {\rm Sol}(K,F)$, and the case when $x\in {\rm Sol}(K,F)$ but $y\ne x$.

\noindent
{\it Case $x\notin {\rm Sol}(K,F)$.}
For the Lie derivative, we have
\begin{align}\label{eq-liederiv}
    \max\mathcal{L}_G V(x, y)=&\langle x-y,\dot x-\dot y\rangle+
    \max\{\langle x-v,\dot x\rangle|\, v\in S(x)\}.
    \end{align}
Using the system dynamic~\eqref{eq: ct Korpelevich-v3}, we obtain
\begin{align*}
    \langle x-y,\dot x-\dot y\rangle &= -\rho_{\lambda,\zeta}(x,y)\|x-y\|^2 +\rho_{\lambda,\zeta}(x,y)\langle x-y, \Pi_K[x-\alpha F(y)]-\Pi_K[x-\alpha F(x)]\rangle\cr
    &\le 
    -\rho_{\lambda,\zeta}(x,y)\|x-y\|^2 +\rho_{\lambda,\zeta}(x,y)\|x-y\|\,\|\Pi_K[x-\alpha F(y)]-\Pi_K[x-\alpha F(x)]\|.
\end{align*}
By using the non-expansiveness property of the projection mapping $\Pi_K$ and the Lipschitz continuity of $F$, we obtain
\begin{align}\label{eq-liederiv-p1}
    \langle x-y,\dot x-\dot y\rangle \le -(1-\alpha L)\rho_{\lambda,\zeta}(x,y)\|x-y\|^2.
\end{align}

Now, we consider the set on the right hand side in~\eqref{eq-liederiv}. For an arbitrary $v\in S(x)$, we have
\begin{align*}
    \langle x-v,\dot x\rangle &=\rho_{\lambda,\zeta}(x,y)
    \langle x-v, \Pi_K[x-\alpha F(y)]-x\rangle.
\end{align*}
Since $S(x)\subset {\rm Sol}(K,F)$, the point $v$ is a solution to VI($K,F)$. Thus, by Lemma~\ref{lem-auxiliary-natural map solution},   it satisfies $v-\Pi_K[v-\alpha F(v)]=0$, and by using this equality in the preceding relation, we obtain
\begin{align*}
    \langle x-v,\dot x\rangle &=\rho_{\lambda,\zeta}(x,y)\langle x-v, v-\Pi_K[v-\alpha F(v)] \rangle +\rho_{\lambda,\zeta}(x,y)\langle x-v,\Pi_K[x-\alpha F(y)]-x\rangle\cr
    & = \rho_{\lambda,\zeta}(x,y)\langle x-v, \Pi_K[x-\alpha F(y)] -\Pi_K[v-\alpha F(v)]\rangle  -\rho_{\lambda,\zeta}(x,y)\|x-v\|^2.
\end{align*}
Hence,
\begin{align*}
    \langle x-v,\dot x\rangle 
     &\le -\rho_{\lambda,\zeta}(x,y)\|x-v\|^2 +\rho_{\lambda,\zeta}(x,y)\|x-v\|\, \|\Pi_K[x-\alpha F(y)] -\Pi_K[v-\alpha F(v)]\|\cr 
    &\le -\frac{\rho_{\lambda,\zeta}(x,y)}{2}\|x-v\|^2+ \frac{\rho_{\lambda,\zeta}(x,y)}{2}\|\Pi_K[x-\alpha F(y)] -\Pi_K[v-\alpha F(v)]\|^2,
\end{align*}    
where the last inequality follows from $ab\le (a^2+b^2)/2$ valid for any scalars $a,b$.
To bound $\|\Pi_K[x-\alpha F(y)] -\Pi_K[v-\alpha F(v)]\|^2$, we use  Lemma~\ref{Lem-aux-1}(a) with $\tau=\alpha$, and obtain
\begin{align*}
    \langle x-v,\dot x\rangle 
    & \le \frac{(c_\alpha^2 -1)\rho_{\lambda,\zeta}(x,y)}{2}\|x-v\|^2 =-\left(\alpha\eta-\frac{\alpha^2 L^2}{2}\right)\rho_{\lambda,\zeta}(x,y) 
    \|x-v\|^2.
\end{align*}   
Finally, using  $\|x-v\|=\dist(x,{\rm Sol}(K,F))$, we obtain
\begin{equation}\label{eq-liederiv-p2}
\langle x-v,\dot x\rangle 
    \le -\left(\alpha\eta-\frac{\alpha^2 L^2}{2}\right)\rho_{\lambda,\zeta}(x,y) \dist^2(x,{\rm Sol}(K,F)).
    \end{equation}
Using relations~\eqref{eq-liederiv-p1} and~\eqref{eq-liederiv-p2}
in equation~\eqref{eq-liederiv}, we conclude that every element in $\mathcal{L}_G V(x, y)$ is upper bounded by 
\begin{align*} 
  &- \rho_{\lambda,\zeta}(x,y) (1-\alpha L)\|x-y\|^2-\rho_{\lambda,\zeta}(x,y)\left(\alpha\eta-\frac{\alpha^2 L^2}{2}\right)\dist^2(x,{\rm Sol}(K,F)).
    \end{align*}
Thus, for the maximum over $v\in S(x)$, we have that
\begin{align}\label{eq-liederiv2} 
&\max\mathcal{L}_G V(x, y) \le 
- \rho_{\lambda,\zeta}(x,y) (1-\alpha L)\|x-y\|^2-\rho_{\lambda,\zeta}(x,y)\left(\alpha\eta-\frac{\alpha^2 L^2}{2}\right)\dist^2(x,{\rm Sol}(K,F)).
  \end{align}
Hence,
\begin{align*}
\max\mathcal{L}_G V(x, y)\leq -\rho_{\lambda,\zeta}(x,y)\left(\alpha\eta-\frac{\alpha^2L^2}{2}\right) {\rm dist}(x,{\rm Sol}(K,F))^2 - \rho_{\lambda,\zeta}(x,y)\left(\frac{1}{2}-\alpha L\right)\|x-y\|^2 .
\end{align*}

We have
$d=\min\left\{\alpha\eta-\alpha^2L^2,\frac{1}{2}-\alpha L\right\}>0$ for $0<\alpha<\min\{\frac{\eta}{L^2},\frac{1}{2L}\}$, 
and using the Lyapunov function $V$, we have
\begin{align}\label{eq-liederiv3} 
    \max \mathcal{L}_G V(x, y)\leq -2d\rho_{\lambda,\zeta}(x,y) V(x,y).
\end{align}
Moreover, using the lower bound on $\rho_{\lambda,\zeta}(x,y)$ from Lemma~\ref{Lem-aux-1}(b), we obtain
\begin{align} \label{eq-liederiv4}
    \max \mathcal{L}_G V(x, y)\leq& -2\sqrt{2}d(1-c) V(x,y)^{\frac{3}{2}}-2d\frac{2^\zeta (1-c)^{2\zeta} V(x,y)^{\frac{1}{2}+\zeta}}{\sqrt{2}(1+c)}.
\end{align}

\noindent
{\it Case $x\in {\rm Sol}(K,F)$ and $y\ne x$.} In this case, the Lie derivative is given by
$\mathcal{L}_GV(x,y) =\langle x-y,\dot x-\dot y\rangle$. 
Using relation~\eqref{eq-liederiv-p1}, and noting that $V(x,y)=\|x-y\|^2/2$ when $x\in{\rm Sol}(K,F)$,
we find that relation \eqref{eq-liederiv3} holds. Then, using the same arguments as in the preceding case, we conclude that the inequality \eqref{eq-liederiv4} holds in this case as well.

Thus, since the function $V$ is locally Lipschitz and the solution trajectory of \eqref{eq: ct Korpelevich-v2} is unique for any initial condition, by Lemma~\ref{lem: Second-cond}, we have  $D^*V(x(t),y(t))\leq \max \mathcal{L}_GV(x, y)$. By Lemma~\ref{Lem-2-Kunal's paper}, the statement of the theorem follows.
\end{proof}

\section{Discussion}
\label{sec-disc}

\subsection{Alternative assumptions}
In this part, we provide an alternative for Assumption~\ref{Assum-nu-quasi-monotone} when a mapping $F$ is defined on $\mathbb{R}^m$.
\begin{assum}\label{Assum:locally-mu-quasi-strongly-monotone}
Assume that
 there is $\eta>0$ and an open ball $B(z^*,r)$ centered at $z^*\in\mathbb{R}^m$ with a radius $r>0$ such that 
    \[\langle F(x)-F(z^*),x-z^*\rangle\geq \eta\|x-z^*\|^2\qquad
    \hbox{for all }x\in B(z^*,r).\]
\end{assum}

Next, we relate $\eta$ of Assumption~\ref{Assum:locally-mu-quasi-strongly-monotone} to the smallest eigenvalue of $\nabla F(z^*)+\nabla F (z^*)^\top$ when $F$ is differentiable.
\begin{thm}\label{thm:smallest eigenvalue_Jacobian-direction-1}
    Let mapping $F:\mathbb{R}^m\to \mathbb{R}^m$ be continuously differentiable.  Then, the following statements are valid:
    \begin{itemize}
        \item [a)] If Assumption~\ref{Assum:locally-mu-quasi-strongly-monotone} holds, then the smallest eigenvalue of $\frac{\nabla F(z^*)+\nabla F (z^*)^\top}{2}$ is lower bounded by $\eta$, i.e., \[\lambda_{min}\left(\frac{\nabla F(z^*)+\nabla F (z^*)^\top}{2}\right)\geq\eta>0.\]
        \item [b)] If $\lambda_{min}\left(\frac{\nabla F(z^*)+\nabla F (z^*)^\top}{2}\right)\geq\nu>0$ at some $z^*\in\mathbb{R}^m$, then $F$ satisfies Assumption~\ref{Assum:locally-mu-quasi-strongly-monotone} with $\eta=\frac{\nu}{2}$. 
    \end{itemize}
\end{thm}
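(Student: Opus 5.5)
Both parts reduce to a first-order Taylor expansion of $F$ at $z^*$ combined with the identity $\langle Ad,d\rangle=\langle \tfrac{A+A^\top}{2}d,d\rangle$ for any square matrix $A$. Throughout, write $J=\nabla F(z^*)$ and $S=\frac{J+J^\top}{2}$.

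\emph{Part a).} I would argue along rays through $z^*$. Fix a unit vector $d\in\mathbb{R}^m$ and set $x=z^*+sd$ with $0<s<r$, so that $x\in B(z^*,r)$. Assumption~\ref{Assum:locally-mu-quasi-strongly-monotone} then gives
\[
\langle F(z^*+sd)-F(z^*),sd\rangle\ge \eta s^2 .
\]
Dividing by $s^2$ yields $\langle \frac{F(z^*+sd)-F(z^*)}{s},d\rangle\ge\eta$. Letting $s\to0^+$, differentiability of $F$ at $z^*$ gives $\langle Jd,d\rangle\ge\eta$. Since $\langle Jd,d\rangle=d^\top S d$, this holds for every unit vector $d$, and the Rayleigh quotient characterization gives $\lambda_{\min}(S)\ge\eta$. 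Only differentiability at $z^*$ is used in this direction.

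\emph{Part b).} Here I would use the integral form of the mean value theorem, which is valid because $F$ is $C^1$:
\[
F(x)-F(z^*)=\int_0^1\nabla F(z^*+t(x-z^*))(x-z^*)\,dt .
\]
With $d=x-z^*$, this gives
\[
\langle F(x)-F(z^*),d\rangle=d^\top Jd+\int_0^1 d^\top\big(\nabla F(z^*+td)-J\big)d\,dt .
\]
The first term is at least $\nu\|d\|^2$, by the hypothesis $\lambda_{\min}(S)\ge\nu$ together with $d^\top J d=d^\top S d$.

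For the remainder term, continuity of $\nabla F$ at $z^*$ supplies $r>0$ with $\|\nabla F(w)-J\|\le \nu/2$ for all $w\in B(z^*,r)$. Convexity of the ball ensures $z^*+td\in B(z^*,r)$ for all $t\in[0,1]$ whenever $x\in B(z^*,r)$. On that ball the remainder is therefore bounded below by $-\frac{\nu}{2}\|d\|^2$, and so
\[
\langle F(x)-F(z^*),x-z^*\rangle\ge\frac{\nu}{2}\|x-z^*\|^2 \qquad\text{for all } x\in B(z^*,r),
\]
which is Assumption~\ref{Assum:locally-mu-quasi-strongly-monotone} with $\eta=\nu/2$.

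The only delicate point is in part b): choosing the radius $r$ from the continuity of the Jacobian, and making sure the entire segment from $z^*$ to $x$ stays inside the ball. Both are routine, and the factor $\tfrac12$ is just a convenient choice of tolerance, $\varepsilon=\nu/2$, in the continuity estimate.
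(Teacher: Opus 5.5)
Your proof is correct, and it has the same structure as the paper's. In (a) the paper also restricts Assumption~\ref{Assum:locally-mu-quasi-strongly-monotone} to rays $z^*+tw$, divides by $t^2$, and lets $t\to0$. In (b) the paper only refers to the proof of Facchinei--Pang's Proposition~2.3.2(c) restricted to a ball around $z^*$; that is exactly the integral mean-value argument you write out.

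The real difference is how (a) passes to the limit. The paper applies the mean value theorem to each component $F_i$ separately, which gives intermediate points $z^i$ on the segment. It stacks the corresponding Jacobian rows into a matrix $\nabla F_{\mathbf z_m}$ with $F(z(t))-F(z^*)=\nabla F_{\mathbf z_m}(z(t)-z^*)$. It then needs continuity of $\nabla F$ to conclude $\nabla F_{\mathbf z_m}\to\nabla F(z^*)$. This representation is an exact identity for each $t$, but it costs an extra construction and the $C^1$ hypothesis. Your direct difference-quotient limit is shorter and shows that (a) needs only differentiability of $F$ at $z^*$. So continuous differentiability is genuinely used only in (b).

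In (b), you prove exactly the ray inequality the assumption requires. Carrying the Facchinei--Pang argument over to the ball gives something stronger: $\langle F(x)-F(y),x-y\rangle\ge\frac{\nu}{2}\|x-y\|^2$ for all $x,y\in B(z^*,r)$, i.e., local strong monotonicity. The same radius, chosen so that $\|\nabla F(w)-\nabla F(z^*)\|\le\nu/2$ on the ball, works for both, because any segment between two points of the ball stays in the ball.
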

\begin{proof} (a) Let  $w\in\mathbb{R}^m$ be arbitrary nonzero vector and let $z(t)=z^*+tw$ for $t>0$ small enough so that 
$z(t)\in B(z^*,r)$. 
By the intermediate value theorem, for every $i=1,\ldots,m$, 
there is some $s_i\in (0,t)$ such that 
    \begin{align*}
        F_i(z(t))-F_i(z^*)
        &=\langle \nabla F_i(z^*+s_i(z(t)-z^*)),z(t)-z^* \rangle= \langle \nabla F_i(z^*+s_itw),z(t)-z^*\rangle. 
    \end{align*}
    Letting $z_i=z^*+s_itw$ for each $i$, we have for all $i=1,\ldots,m$,
    \begin{align}\label{eq-1-smallest eigenvalue_Jacobian}
        F_i(z(t))-F_i(z^*)
        = \langle \nabla F_i(z^i),z(t)-z^*\rangle. 
    \end{align}
    Let
   ${\bf z}_m=\{z^1,\ldots,z^m\}$ be the collection of the points $z_i=z^*+s_itw$. Construct a matrix $\nabla_{{\bf z}_m} F$ from the Jacobians $\nabla F(z^1),\ldots,\nabla F(z^m)$, by taking the $i$-th row of each Jacobian $\nabla F(z^i)$, $i=1,\ldots,m$, i.e.,
\[\nabla F_{{\bf z}_m} =\left[\begin{array}{c}
[\nabla F(z^1)]_{1:}\cr\vdots\cr
[\nabla F(z^m)]_{m:}
\end{array}\right],\]
with $A_{i:}$ denoting the $i$-th row of a matrix $A$.
Using this matrix, we concatenate all equalities in~\eqref{eq-1-smallest eigenvalue_Jacobian} and obtain
\begin{equation*}
        F(z(t))-F(z^*)=\nabla F_{\bf z_m}\cdot(z(t)-z^*).
    \end{equation*}
    From the preceding relation and Assumption~\ref{Assum:locally-mu-quasi-strongly-monotone}, since $z(t)\in B(z^*,r),$ we obtain
    \begin{align*}
        \langle \nabla F_{\bf z_m}\!\!\cdot\!(z(t)\!-\!z^*),z(t)-z^*\rangle
        &=\langle F(z(t))-F(z^*), z(t)\!-\!z^*\rangle\geq \eta \|z(t)-z^*\|^2.
    \end{align*}
    Using the fact $z(t)-z^*=tw$, with $t>0$, we further obtain
    \[\langle \nabla F_{\bf z_m}\!\cdot w,w\rangle
    \geq \eta \|w\|^2.\]
    As $t\to 0$, the collection ${\bf z_m}$ reduces to a collection of $m$ copies of $z^*$, and $\nabla F_{\bf z_m}$ tends to $\nabla F(z^*)$. Thus, by letting $t\to 0$, we have
    \[ \frac{1}{2}\langle (\nabla F(z^*) +\nabla F(z^*)^\top) w,w\rangle
    =\langle \nabla F(z^*) w,w\rangle
    \geq \eta \|w\|^2,\]
    implying that all eigenvalues of $\frac{\nabla F(z^*)+\nabla F (z^*)^\top}{2}$ are bounded from below by $\eta>0$. 

\noindent(b) The proof is along the lines of the proof of \cite[Proposition 2.3.2, part c)]{facchinei2003finite} restricted to a local ball containing $z^*$. 
\end{proof}

The following example illustrates that Assumption~\ref{Assum:locally-mu-quasi-strongly-monotone} need not hold for a dynamic with an FxTS equilibrium. 
\begin{exmp}\label{exmp-relation-mu-quasi-str-FxTS}
Let \[
A =
\begin{bmatrix}
1 & 10 \\
0 & 1
\end{bmatrix},\qquad M=\frac{1}{2}(A+A^\top)=\begin{bmatrix}
1 & 5 \\
5 & 1
\end{bmatrix}.\]
The eigenvalues of $A$ are $\{1,1\}$, and
the eigenvalues of $M$ are $\{6,-4\}$. The mapping $x\mapsto Ax$ 
does not satisfy Assumption~\ref{Assum:locally-mu-quasi-strongly-monotone} by Theorem~\ref{thm:smallest eigenvalue_Jacobian-direction-1}(a). However, by~\cite[Theorem~4.6 and Corollary~4.3]{khalil2002nonlinear}, the origin is exponentially stable for the dynamic $\dot{x}=-Ax$. 
Consider a Lyapunov function $V(x)=x^\top Px$ with a positive definite $P$ such that $(-A^\top) P+P(-A)=-Q$ for some positive definite $Q$ (such matrices $P,Q$ exist by \cite[Corollary~8.2.1]{DATTA2004245}).
 Let \textcolor{black}{$\rho(x)=V(x)^{\frac{1}{2}}+V(x)^{-\frac{1}{2}+\zeta}$ with $\zeta\in(0,\frac{1}{2})$}, and consider the system $\dot{x}=-\rho(x)Ax$. We have $\dot{V}(x)\leq -\rho(x)\lambda_{\min}(Q)\|x\|^2$. Moreover, for the Lyapunov function $V$, we have
 $\|x\|^2\ge \lambda^{-1}_{\max}(P)\, V(x)$. Hence, \textcolor{black}{
 \begin{align*}
     \dot{V}(x)& \leq -\rho(x)\lambda_{\min}(Q)\lambda^{-1}_{\max}(P)\,V(x)\leq -\lambda_{\min}(Q)\lambda^{-1}_{\max}(P) \left(V(x)^\frac{3}{2}+V(x)^{\frac{1}{2}+\zeta}\right).
 \end{align*}} Thus, for this system, 
the origin is FxTS with a settling time bounded from above by \textcolor{black}{$\frac{4(1-\zeta)}{(1-2\zeta)}\lambda_{\max}(P)\lambda_{\min}^{-1}(Q)$}.
\end{exmp}

The next example shows that
$\nabla F$ may have an eigenvalue with a negative real part (violating Assumption~\ref{Assum:locally-mu-quasi-strongly-monotone}), yet there might be some dynamics with FxTS equilibrium guarantee.

\begin{exmp}\label{exmp-relation-with-eigenvalues}
    Consider a mapping $F:\mathbb{R}^{4}\to\mathbb{R}^{4}$ given by $F(x)=[Ax_1+2Bx_2,3C x_1+Dx_2]$, where $x_1,x_2\in \mathbb{R}^2$ and $A,B,C,D$ are symmetric $2\times 2$ matrices. 
Let
\begin{align*}
    &A = \begin{bmatrix}-1 & 0.5 \\ 0.5 & 2\end{bmatrix},\quad
B = \begin{bmatrix}0.2 & 0.1 \\ 0.1 & 0.3\end{bmatrix},\\
&C = \begin{bmatrix}0.4 & 0 \\ 0 & 0.1\end{bmatrix},\quad
D = \begin{bmatrix}3 & 0.2 \\ 0.2 & 4\end{bmatrix}.
\end{align*}
For this choice, the Jacobian $\nabla F(x)$ is: for all $x\in\mathbb{R}^4$,
\[
\nabla F (x)= 
\begin{bmatrix}
A & 2B \\[2mm]
3C & D
\end{bmatrix}
=
\begin{bmatrix}
-1 & 0.5 & 0.4 & 0.2 \\
0.5 & 2 & 0.2 & 0.6 \\
1.2 & 0 & 3 & 0.2 \\
0 & 0.3 & 0.2 & 4
\end{bmatrix}.
\]
The eigenvalues of $\nabla F(x)$ (rounded to two decimal points) are $\{-1.18, 4.15, 3.07, 1.96\},$
so $\nabla F(x)$ has both positive and negative real eigenvalues. 
The eigenvalues of $(\nabla F(x)+\nabla F(x)^\top)/2$ (rounded to two decimal points) are $\{-1.22, 4.16, 3.11, 1.95\},$  
so $F$ does not satisfy Assumption~\ref{Assum:locally-mu-quasi-strongly-monotone}. 
The eigenvalues of $\nabla F(x)\nabla F(x)^{\top}$ are $\{17.38, 10.20, 3.84, 1.28\},$
 so $\nabla F(x)\nabla F(x)^{\top}$ is positive definite; hence $\nabla F(x)$ is invertible. By satisfying the conditions in Theorem~\ref{Thm:Design-continuous-dynamic-fixed-time-stable}, the equilibria of the system~\eqref{eq:first-fxTS-sys} are FxTS.
\end{exmp}

\subsection{Discrete-time Variant}
{\color{black}We provide a brief discussion on the following discrete-time variant of the continuous-time Korpelevich system in~\eqref{eq: ct Korpelevich-v2}, for some $\gamma>0$, 
\begin{subequations}\label{eq: dt Korpelevich-v2}
    \begin{align}
         y^{k+1}\!\! & = y^k\!\!+\gamma\rho_{\lambda,\zeta}(x^k,y^k)\left( \Pi_K[x^k\!-\alpha  F(x^k)]-y^k\right),\\
         x^{k+1}\!\! & = x^k\!\!+\gamma\rho_{\lambda,\zeta}(x^k,y^k)\left( \Pi_K[x^k\!-\alpha  F(y^k)]-x^k\right),
    \end{align}
\end{subequations}
initiated with $x^0,y^0\in K$. In the absence of additional conditions, an upper bound for $\rho_{\lambda,\zeta}(x,y)$ is not available, thus not allowing us to
specify a range for $\gamma$ so that the iterates of  the method in~\eqref{eq: dt Korpelevich-v2} remain in the set $K$. To deal with this, we will assume\footnote{An alternative is to extend the mapping outside the set $K$ by letting $F(x):=F(\Pi_K[x])$ for all $x\notin K$.} that $\dom(F)=\mathbb{R}^m$. The work in \cite{garg2022fixed} establishes convergence properties for the iterates of the method to reach an $\varepsilon$-neighborhood of the point $(x^*,x^*)$, where $x^*$ is the unique solution of VI($K,F$). In particular, it is shown in \cite[Theorem 2]{garg2022fixed} that the trajectories of the discretized dynamics \eqref{eq: dt Korpelevich-v2} remain within an $\epsilon-$neighborhood for all times and reach the $\epsilon-$neighborhood of the equilibrium of the continuous-time dynamics \eqref{eq: ct Korpelevich-v2} within a fixed number of steps. We refer the interested reader to \cite{garg2022fixed} for more details. 
}


\section{Simulations}\label{Sec: Simulations}
We consider the method in~\eqref{eq: dt Korpelevich-v2} for solving VI($K,F)$ with $F(x)=[5x_1+x_2,x_1+5x_2]$ and $K=[-10,10]\times [-10,10]$. The eigenvalues of $\left(\nabla F(x)+\nabla F(x)\right)^\top/2$ are $\{6, 4\}$ for all $x\in K$.
The unique solution of VI$(K,F)$ is $x^*=[0,0]^\top$. We have $\langle F(x)-F(x^*),x-x^*\rangle\geq 4\|x-x^*\|^2$; hence, have $\eta=4$ and $L= 6$. We let $\lambda = \frac{\eta}{L^2}$, $\alpha  = \lambda$, and $\gamma\in\{0.02,0.04,0.06,0.08\}$, and $x^0=[8,-6]^\top$, $y^0=[-5,7]^\top$, $\zeta=0.2$. As seen in Figure~\ref{fig:FxTS1}, increasing $\gamma$ \sa{in \eqref{eq: dt Korpelevich-v2}} 
results in a faster convergence. Figure~\ref{fig:FxTS2} shows the effect of the $\gamma$ parameter in the contraction of the mapping associated with the algorithm, i.e., the decrease of $\|x^{k+1}-x^k\|$ with the number $k$ of iterations. 

\begin{figure}[h!]
    \centering
    \begin{minipage}{0.52\textwidth}
        \centering
\includegraphics[width=\linewidth]{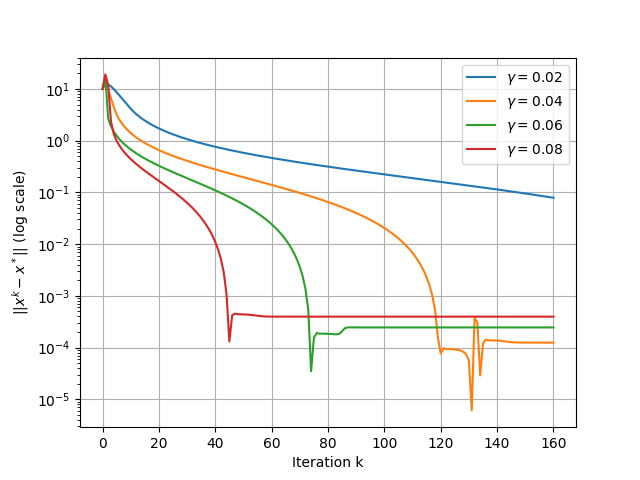}
        \caption{Log-scale convergence of the Korpelevich-type algorithm~\eqref{eq: dt Korpelevich-v2}.}
        \label{fig:FxTS1}
    \end{minipage}\hfill
    \begin{minipage}{0.52\textwidth}
        \centering
        \includegraphics[width=\linewidth]{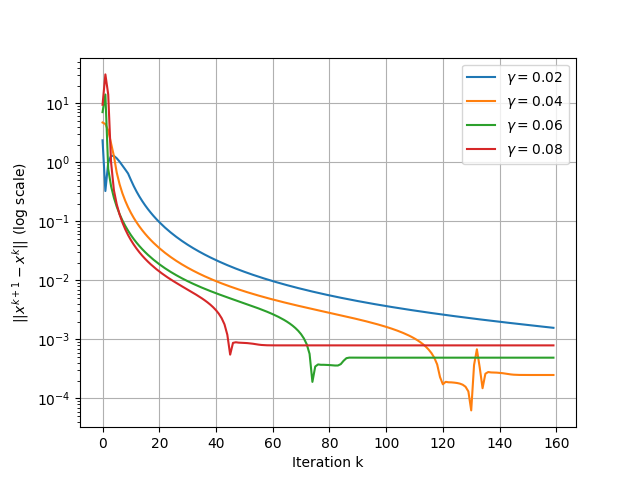}
        \caption{Log-scale of the norm  $\|x^{k+1}-x^k\|$ for different $\gamma$ values. }
        \label{fig:FxTS2}
    \end{minipage}
\end{figure}

\section{Conclusion}\label{Sec: Conclusion}
In this paper, we have investigated exponential and fixed-time stability of the solution set of a class of non-monotone VIs. We have combined tools from non-smooth and Lyapunov analysis to develop stability results for solutions of unconstrained and constrained VIs under the Korpelevich dynamic. Through examples, we have demonstrated that different assumptions can be used to assert FxTS of equilibria for a dynamical system. We supported our results with simulations that demonstrate the behavior of the Korpelevich dynamic.

{\color{black}Although a brief discussion on the discretized dynamics and its convergence behavior was provided based on the existing results, more work needs to be done for finite- or fixed-time convergence of discrete time algorithms for solving VIs. Future work will explore further relaxations of the conditions imposed in this work to accelerate convergence through designing discrete-time variants with similar convergence properties.}

\bibliographystyle{plain}
\bibliography{refs.bib}

\appendix

\section{Proofs of Auxiliary Results}
\subsection{Proof of Lemma~\ref{Lem-2-Kunal's paper}}\label{Sec:Proof of Lem-2-Kunal's paper}
\begin{proof}
The equilibrium set $M$ is closed due to our underlying assumption that $f$ in~\eqref{Eq:Sys} is continuous.  The global finite-time stability of the set $M$ will follow by the assumption on the growth of $V$ stated in condition~(1) of the lemma, provided that $V(x(\hat t))=0$ at some finite time $\hat t\ge0$ for any initial point $x(0)\in{\rm dom}(f)$, which we prove next.

Let $x(\cdot)$ be the trajectory of the system in~\eqref{Eq:Sys} with an arbitrary initial point $x(0)\in{\rm dom}(f)$.
Along this trajectory, define $\psi(s)=V(x(s))$ for all $s\ge0$. Since $V$ is locally Lipschitz continuous and $x(\cdot)$ is differentiable, the scalar function $\psi$ is also locally Lipschitz continuous. Thus, by Rademacher's  Theorem~\ref{The:Rademacher}, the function $\psi$ is differentiable almost everywhere on $[0,+\infty)$, i.e., $\dot \psi(s)=D^*\psi(s)$ for almost all $s\ge0$, except for a set of zero Lebesgue measure. Therefore, for all $t\ge0$,
\begin{equation}\label{eq:app_rad_th}
     \int_{0}^{t}\frac{\dot \psi(s)}{\psi^{\beta\theta}(s)}ds=\int_{0}^{t}
    \frac{D^{*}V(x(s))}{V^{\beta\theta}(x(s))}ds.
\end{equation}

By condition (2) we have 
\begin{equation}\label{eq:lem-1-2}
    D^{*}V(x(t)) \le -q^{\theta} V^{\beta\theta}(x(t)) \quad \text{for } V(x(t)) > 1,
\end{equation}
\begin{equation}\label{eq:lem-1-1}
    D^{*}V(x(t)) \le -p^{\theta} V^{\alpha\theta}(x(t)) \quad \text{if } V(x(t)) \le 1.
\end{equation}

Given an initial point $x(0)\in{\rm dom}(f)$, there are two possibilities:  $V(x(0)) > 1$ or $V(x(0)) \le 1$, which are considered separately.

\noindent
{\it Case $V(x(0)) > 1$}. Suppose that the trajectory $x(\cdot)$ starts at $x(0)\in{\rm dom}(f)$ with $V(x(0)) > 1$.
Then, by~\eqref{eq:lem-1-2} for an interval $[0,t]$ where $V(x(s)) > 1$ for all $s\in[0,t]$, we have 
\begin{align}\label{eq:lem-1-2-1}
    -\frac{V^{1-\beta\theta}(x(t))}{\beta\theta-1}+\frac{V^{1-\beta\theta}(x(0))}{\beta\theta-1}
    =\int_{0}^{t}\frac{\dot \psi(s)}{\psi^{\beta\theta}(s)}ds,
    \end{align}
    where we used $\beta\theta>1$.
    Therefore, from~\eqref{eq:lem-1-2-1} we obtain
\begin{align}\label{eq:lem-1-2-2}
    -\frac{V^{1-\beta\theta}(x(t))}{\beta\theta-1}+\frac{V^{1-\beta\theta}(x(0))}{\beta\theta-1}    
    &=\int_{0}^{t}
    \frac{D^{*}V(x(s))}{V^{\beta\theta}(x(s))}ds \leq -\int_{0}^{t}q^{\theta}ds=-q^{\theta} t,
\end{align}
where the inequality follows from~\eqref{eq:lem-1-2}.
Hence, 
\[\frac{V^{1-\beta\theta}(x(t))}{q^{\theta}(\beta\theta-1)}\geq  \frac{V^{1-\beta\theta}(x(0))}{q^{\theta}(\beta\theta-1)}+t > t,\]
implying that
\[\frac{1}{t q^{\theta}(\beta\theta-1)}\ge V^{\beta\theta-1}(x(t)).\]
As $t$ increases, the value $V^{\beta\theta-1}(x(t))$ decreases, so there must exist the first time $t_1$ when $V(x(t_1))=1$, implying that 
\[\frac{1}{q^{\theta}(\beta\theta-1)}\ge t_1,\]
and for any $t\ge t_1$, we have $V(x(t))\le 1.$

Next, for any $t\ge t_1$, using~\eqref{eq:lem-1-1} and a line of analysis similar to the derivation of~\eqref{eq:lem-1-2-2}, we can show that
\begin{align*}
    \frac{V^{1-\alpha\theta}(x(t))}{1-\alpha\theta}
    -\frac{V^{1-\alpha\theta}(x(t_1))}{1-\alpha\theta} 
    \leq -\int_{t_1}^{t}p^{\theta}ds=-p^{\theta} (t-t_1).
\end{align*}
Thus,
\begin{equation}\label{eq:lem-1-3}
    \frac{V^{1-\alpha\theta}(x(t))}{p^{\theta}(1-\alpha\theta)}-\frac{V^{1-\alpha\theta}(x(t_1))}{p^{\theta}(1-\alpha\theta)}\le - (t-t_1),
\end{equation}
implying that
\begin{align*}
 \frac{V^{1-\alpha\theta}(x(t))}{p^{\theta}(1-\alpha\theta)} 
 &\le   \frac{V^{1-\alpha\theta}(x(t_1))}{p^{\theta}(1-\alpha\theta)} -(t-t_1)\le \frac{1}{p^{\theta}(1-\alpha\theta)} -(t-t_1),   
\end{align*}
where the last inequality follows from $V(x(t_1))= 1$ and $1-\alpha\theta>0$. As $t$ increases, the value $V(x(t))$ decreases, so there is the first time $t_2>t_1$ when $V(x(t_2))=0$, implying that
\[\frac{1}{p^{\theta}(1-\alpha\theta)} -(t_2-t_1)\ge0.\]
Since $V(x(t_2))=0$, it follows that $\dist(x(t_2),M)=0$ by condition~(1), implying that $x(t_2)\in M$ since $M$ is closed. Also, since $V(x)=0$ when $x\in M$ by condition~(1), it follows that $x(t)\in M$ for all $t\ge t_2$.
Thus, the time $t_2$ is the settling time $T(x(0))$ for the dynamic and it satisfies
\[T(x(0))\le\frac{1}{p^{\theta}(1-\alpha\theta)} +t_1\le \frac{1}{p^{\theta}(1-\alpha\theta)}+\frac{1}{q^{\theta}(\beta\theta-1)}.\]

\noindent
{\it Case $V(x(0)) \le 1$}.
Suppose that a trajectory $x(\cdot)$ starts at $x(0)\in{\rm dom}(f)$ with $V(x(0)) \le 1$ and $V(x(0))>0$. In this case, the preceding analysis from time $t_1$ to time $t_2$ applies with $t_1=0$. Hence, by relation~\eqref{eq:lem-1-3} with $t_1=0$, we have
\begin{equation}\label{eq-llast}
\frac{V^{1-\alpha\theta}(x(t))}{p^{\theta}(1-\alpha\theta)} 
 \le   \frac{V^{1-\alpha\theta}(x(0))}{p^{\theta}(1-\alpha\theta)} -t.
 \end{equation}
 As $t$ increases, the quantity on the right-hand side of inequality~\eqref{eq-llast} decreases. Thus, there exists the first time $\hat t>0$, when $V(x(\hat t))=0$. Similar to the preceding case, by using condition~(1) we conclude that
 $x(\hat t)\in M$, and  $x(t)\in M$ for all $t\ge \hat t$.
 Thus, the time $\hat t$ is the settling time $T(x(0))$ in this case, and letting $t=T(x(0))$  in~\eqref{eq-llast} yields
 \[T(x(0))\le \frac{V^{1-\alpha\theta}(x(0))}{p^{\theta}(1-\alpha\theta)} \le 
 \frac{1}{p^{\theta}(1-\alpha\theta)},\]
 where the last inequality is obtained by using $V(x(0)) \le 1$ and \hbox{$1-\alpha\theta>0$}. This completes the proof.
\end{proof}

 \subsection{Proof of Lemma~\ref{lem: Second-cond}}\label{Sec:lem: Second-cond}
 \begin{proof}
 Let $t\ge0$ be an arbitrary but fixed time and $s>0$.
We write $x(t+s)=x(t+s)-sf(x(t))+sf(x(t))$ so that 
\[x(t+s)=v(s) + sf(x(t)) \quad\hbox{with} \quad
v(s)=x(t+s)-s f(x(t)).\]
Then, we have
\begin{align*}
    D^*V(x(t))
    &=\limsup_{ s\to 0^+}\frac{V(x(t+s))-V(x(t))}{s}=\limsup_{ s\to 0^+}\frac{V(v(s)+sf(x(t)))-V(x(t))}{s}.
    \end{align*}
    By adding and subtracting $V(v(s))$ in the numerator of the preceding relation, 
    we obtain
    \begin{align*}
    D^*V(x(t))
    &\le\limsup_{ s\to 0^+}\frac{V(v(s)+sf(x(t)))-V(v(s))}{s} +\limsup_{ s\to 0^+}\frac{V(v(s))-V(x(t))}{s}.\cr
\end{align*}
Since $\lim_{s\to 0^+}v(s)=x(t)$ by continuity of $x(\cdot)$, we have
\begin{align*}
&\limsup_{ s\to 0^+}\frac{V(v(s)+sf(x(t)))-V(v(s))}{s}\le \limsup_{y\to x(t), s\to0^+}\frac{V(y+s f(x(t))-V(y)}{s}.\end{align*}
As a result, 
\begin{align*}
    D^*V(x(t))
    &\leq\limsup_{y\to x(t), s\to0^+}\frac{V(y+s f(x(t))-V(y)}{s} +\limsup_{ s\to 0^+}\frac{V(v(s))-V(x(t))}{s}.
\end{align*}
Using the definition of the Clarke generalized directional derivative, we have
\begin{align}\label{eq:dir1}
    D^*V(x(t))
    &\leq V^\circ(x(t);f(x(t))) +\limsup_{ s\to 0^+}\frac{V(v(s))-V(x(t))}{s}.
\end{align}
By the local Lipschitz continuity of $V(x)$, we have 
\begin{align}\label{eq:dir2}
\limsup_{ s\to 0^+}\frac{V(v(s))-V(x(t))}{s} &\le 
    \limsup_{ s\to 0^+}\frac{|V(v(s))-V(x(t))|}{s}\leq L_{t} \limsup_{ s\to 0^+}\frac{\|v(s)-x(t)\|}{s},
\end{align}
where $L_t$ is a local Lipschitz constant for the point $x(t)$.
Since $v(s)=x(t+s)-sf(x(t))$ and $x(\cdot)$ is a solution to \eqref{Eq:Sys} 
(satisfies~\eqref{eq:sys-solution}), we have
\[v(s)-x(t)=\int_{t}^{t+s}f(x(s))ds-s f(x(t)),\]
implying that 
\begin{align*}
    \lim_{s\to 0^+}\frac{\|v(s)-x(t)\|}{s}
    &=
\lim_{s\to 0^+} \left\|\frac{1}{s}\int_{t}^{t+s}\!f(x(s))ds- sf(x(t))\right\|=0.
\end{align*}
The preceding relation and~\eqref{eq:dir1}--\eqref{eq:dir2} yield
\[D^*(V(x(t))\le V^o(x(t);f(x(t))).\]
As $t\ge0$ is arbitrary and $V^{\circ}(x(t);f(x(t)))=\max \mathcal{L}_fV(x(t))$ for all $t\ge0$ 
by \cite[Proposition~1.5(b)]{clarke1998nonsmooth}, the result follows.
\end{proof}

\subsection{Proof of Lemma~\ref{lem-sudiff-dist2}}\label{ap-proof-subdiff}
\begin{proof}
By \cite[Example 8.53] {rockafellar1998variational}, the subdifferential set of the distance function ${\rm dist}(\cdot,U)$ is given by
\[\partial {\rm dist} (z,U)=\left\{
\begin{array}{ll}
\left\{\frac{z-v}{{\rm dist}(z,U)}\mid v\in U_z\right\} & \hbox{if $z\notin U$},\cr
N_U(z)\cap \mathbb{B}(0,1) & \hbox{if $z\in U$},
\end{array}\right. \]
where $U_z$ is the compact subset of $U$ consisting of the points that attain the minimum distance from $z$ to the set $U$, i.e., 
$U_z=\{v\in U\mid \|z-v\|={\rm dist}(z,U)\},$
the set $N_U(z)$ is the normal cone of the set $U$ at the point $z\in U$ (\cite[Definition 6.3]{rockafellar1998variational},
and 
$\mathbb{B}(0,1)$ is the closed unit ball centered at the origin.
Applying the chain rule \cite[Theorem 2.3.9(ii)]{clarke1990optimization} to a composition of the form $g\circ h$ (which applies since $g(s)=s^2$ is continuously differentiable, hence strictly differentiable~\cite[Corollary, page 32]{clarke1990optimization}), the subdifferential set $\partial \, {\rm dist}^2(z,U)$ is given by
\begin{align}\label{eq:clarke-1}
\partial{\rm dist}^2(z,U)\!=\!\!\left\{\begin{array}{ll}
    \!\! \! 2\,{\rm conv}\left(\{z-v \mid v\in U_z\}\right) &\!\! \hbox{if $z\notin U$},\cr
\!\! \! \{0\} & \!\!\hbox{if $z\in U$},
\end{array}\right.
\end{align}
where  $tS$ is a scaling of the set $S$ by a scalar $t$ and  ${\rm conv}(S)$ is the convex hull of a set $S$. \an{In~\eqref{eq:clarke-1}, we have $\partial \, {\rm dist}^2(z,U)=\{0\}$ when $z\in U$,  since
${\rm dist} (z,U)=0$.}

Using the Minkowski sum notation, i.e., $X+Y=\{x+y\mid x\in X, y\in Y\}$, for the subdifferential in~\eqref{eq:clarke-1}
we have
\[\partial\, {\rm dist}^2(z,U)=2{\rm conv}\left(\{z\}-U_z\right)
\qquad\hbox{for $z\notin U$}.\]
The Minkowski sum and the convex hull operations commute
\cite[Theorem~1.1.2]{Schneider_2013}, i.e., ${\rm conv}(X+Y)={\rm conv}(X)+{\rm conv}(Y)$ for any two sets $X,Y\subseteq\mathbb{R}^m$. Moreover, ${\rm conv}(tX)=t\,{\rm conv(X)}$ for any set $X$ and any scalar $t$, so it follows that  
\[\partial\, {\rm dist}^2(z,U)=2\left(\{z\}-{\rm conv}(U_z)\right)
\qquad\hbox{for $z\notin U$}.\]
The result follows from the preceding relation and~\eqref{eq:clarke-1}.
\end{proof}

\end{document}